\documentclass[12pt]{amsart}
\usepackage{fullpage,amssymb}
\usepackage{mathrsfs}
\usepackage{color}
\usepackage{algorithmicx}
\usepackage{algpseudocode}
\usepackage{tikz-cd}

\usepackage{color}   
\usepackage{hyperref}
\hypersetup{
    linktoc=all,     
    linkcolor=black,  
}

\newtheorem{theorem}{Theorem}[section]
\newtheorem{corollary}[theorem]{Corollary} 
\newtheorem{lemma}[theorem]{Lemma}
\newtheorem{proposition}[theorem]{Proposition}

\theoremstyle{definition}
\newtheorem{definition}[theorem]{Definition}
\newtheorem{remark}[theorem]{Remark}
\newtheorem{example}[theorem]{Example}

\newcommand{\R}{{\mathbb R}}

\newcommand{\B}{{\mathcal B}}
\newcommand{\C}{{\mathbb C}}

\newcommand{\Q}{{\mathbb Q}}
\newcommand{\Z}{{\mathbb Z}}

\newcommand{\Mat}{\operatorname{Mat}}
\newcommand{\GL}{\operatorname{GL}}
\newcommand{\SL}{\operatorname{SL}}

\newcommand{\VV}{{\mathbb V}}

\newcommand{\nullcone}{\mathcal{N}}
\newcommand{\Sym}{S}

\newcommand{\rk} {\operatorname{rk}}
\newcommand{\g} {\mathfrak{g}}

\newcommand{\tn}{\mathfrak{t}}

\newcommand{\U}{\operatorname{U}}
\newcommand{\SU}{\operatorname{SU}}

\newcommand{\uncramped} {uncramped}
\newcommand{\Supp} {{\rm Supp}}
\newcommand{\wt}{\widetilde}
\newcommand{\isom} {\stackrel{\sim}{\longrightarrow}}
 \newcommand{\sslash}{\mathbin{\mkern-4mu/\mkern-6mu/\mkern-4mu}}


\title{An exponential lower bound for the degrees of invariants of cubic forms and tensor actions}
\author{Harm Derksen and Visu Makam}
\thanks{This material is based upon work supported by the National Science Foundation under Grant No. DMS-1601229 and DMS-1638352}
\keywords{invariant rings, exponential lower bounds, Grosshans principle, moment map}
\begin{document}

\maketitle
\begin{abstract}
Using the Grosshans Principle, we develop a method for proving lower bounds for the maximal degree of a system of generators of an invariant ring.
This method also gives lower bounds for the maximal degree of a set of invariants that define Hilbert's null cone. 
We consider two actions: The first is the action of $\SL(V)$ on $\Sym^3(V)^{\oplus 4}$, the space of $4$-tuples of cubic forms, and the second is the action of $\SL(V) \times \SL(W) \times \SL(Z)$ on the tensor space $(V \otimes W \otimes Z)^{\oplus 9}$.
In both these cases, we prove an exponential lower degree bound for a system of invariants that generate the invariant ring or that define the null cone.   
\end{abstract}

\tableofcontents

\section{Introduction}
For simplicity, we choose the set $\C$ of complex numbers as our ground field, although most results are valid for arbitrary fields of characteristic 0.
Let $V$ be a rational representation of a reductive group $G$ and denote the ring of polynomial functions on $V$ by $\C[V]$.
The group $G$ also acts on $\C[V]$ and the ring of invariants is 
$$\C[V]^G=\{f\in \C[V]\mid \mbox{$g\cdot f=f$ for all $g\in G$}\}.$$
 It is well known that the ring of invariants $\C[V]^G=\bigoplus_{d=0}^\infty \C[V]^G_d$ is a finitely generated graded subring of the polynomial ring $\C[V]$ (see \cite{Haboush,Hilbert1,Hilbert2,Nagata}). All representations in this paper will be rational representations by default. A fundamental question in invariant theory is to describe the generators of an invariant ring and their relations.

Invariant rings play a central role in the Geometric Complexity Theory (GCT) approach to the P vs NP problem. This connection to computational complexity results in new problems in invariant theory, albeit with a different flavor. As one might expect, these problems are more quantitative in nature, asking for how easy or hard the invariant ring is from a computational perspective. There are well understood notions of hardness of computation in computational complexity. We refer to \cite{GCT5} for precise details, as well as numerous conjectures and open problems in invariant theory that are inspired by computational complexity.
From the perspective of GCT, a central problem of interest is the problem of degree bounds for generators.

The problem of finding strong upper bounds for the degrees of generators has been studied. An approach via understanding the null cone was proposed by Popov (see~\cite{Popov1,Popov2}), and improved by the first author, see \cite{Derksen1}. 
The zero set of a set of polynomials $S\subseteq \C[V]$ is
$$
\VV(S)=\{v\in V\mid f(v)=0\mbox{ for all $f\in S$}\}.
$$
Hilbert's null cone $\nullcone\subseteq V$ is defined by $\nullcone=\VV(\bigoplus_{d=1}^\infty \C[V]_d^G)$.
\begin{definition}
We  define $\sigma_G(V)$ to be the smallest integer $D$ such that the non-constant homogeneous invariants of degree $\leq D$ define the null cone,
so 
$$
\textstyle\sigma_G(V)=\min\Big\{D\,\Big|\, \nullcone=\VV\big(\bigoplus_{d=1}^D \C[V]^G_d\big)\Big\}.
$$
General upper bounds for $\sigma_G(V)$ were first given by Popov (see~\cite{Popov1,Popov2}), and improved by the first author in \cite{Derksen1}.

\begin{remark} \label{hsop}
The number $\sigma_G(V)$ can also be defined as the smallest integer $D$ such that $\C[V]^G$ is a finite extension over the subalgebra generated by $\oplus_{d = 0}^D \C[V]^G_d$. 
\end{remark}

We define $\beta_G(V)$ to be the smallest integer $D$ such that invariants of degree $\leq D$ generate $\C[V]^G$, i.e.,
$$
\textstyle\beta_G(V) = \min\Big\{D\,\Big|\, \bigoplus_{d=0}^D \C[V]^G_d \text{ is a generating set for } \C[V]^G\Big\}.
$$
\end{definition}
The number $\beta_G(V)$ can also be seen as the largest degree of a minimal set of (homogeneous) generators for $\C[V]^G$. It is easy to see that
$\beta_G(V)\geq \sigma_G(V)$. The first author showed in \cite{Derksen1} that $\beta_G(V)\leq \max\{2,\frac{3}{8}r\sigma_G(V)^2\}$, where $r$ is the Krull dimension of $\C[V]^G$, which is bounded above by $\dim V$.


In this paper, we focus instead on lower bounds. The key idea is to compare two invariant rings via a surjective map between them. 
\begin{lemma} \label{surj.bw.inv}
Suppose $U_1,U_2$ are representations of $G$ and $H$ respectively, such that we have a degree non-increasing surjective homomorphism $\phi: \C[U_1]^G \twoheadrightarrow \C[U_2]^H$. Then we have 
$$
\beta_G(U_1) \geq \beta_H(U_2) \text{ and } \sigma_G(U_1) \geq \sigma_H(U_2).
$$
\end{lemma}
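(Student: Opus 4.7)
The plan is to handle the two inequalities in parallel, each by transporting a witness for the relevant quantity from $\C[U_1]^G$ across $\phi$ to $\C[U_2]^H$. Throughout, the one subtlety to keep track of is that $\phi$ is only assumed degree non-increasing, so the image of a homogeneous element need not be homogeneous; we will cure this by passing to homogeneous components.

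For the $\beta$ inequality, set $D=\beta_G(U_1)$ and choose homogeneous generators $f_1,\dots,f_k\in\C[U_1]^G$ with $\deg f_i\le D$. Since $\phi$ is surjective, the elements $\phi(f_1),\dots,\phi(f_k)$ generate the $\C$-algebra $\C[U_2]^H$. By the degree non-increasing hypothesis, each $\phi(f_i)$ lies in $\bigoplus_{e=0}^{D}\C[U_2]^H_e$, so its homogeneous components all have degree at most $D$. The collection of these homogeneous components generates the same subalgebra as $\{\phi(f_i)\}$, namely all of $\C[U_2]^H$, and consists of homogeneous invariants of degree $\le D$. Hence $\beta_H(U_2)\le D$.

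For the $\sigma$ inequality, I would invoke Remark \ref{hsop}. Setting $D=\sigma_G(U_1)$, the subalgebra $A\subseteq\C[U_1]^G$ generated by $\bigoplus_{d=0}^{D}\C[U_1]^G_d$ has the property that $\C[U_1]^G$ is finite over $A$. Write $\C[U_1]^G=\sum_{i=1}^{m}A\cdot r_i$. Applying the surjection $\phi$ gives $\C[U_2]^H=\phi(\C[U_1]^G)=\sum_{i=1}^{m}\phi(A)\cdot\phi(r_i)$, so $\C[U_2]^H$ is a finite $\phi(A)$-module. Exactly as in the previous paragraph, $\phi(A)$ is generated by elements of degree $\le D$, so it is contained in the subalgebra $B\subseteq\C[U_2]^H$ generated by the homogeneous invariants $\bigoplus_{d=0}^{D}\C[U_2]^H_d$ (again by extracting homogeneous components). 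Finiteness over $\phi(A)$ therefore forces finiteness over $B$, and invoking Remark \ref{hsop} in the reverse direction yields $\sigma_H(U_2)\le D$.

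There is no serious obstacle: the only place one must be careful is the homogeneous-components step, which is why the hypothesis is stated as degree non-increasing rather than as a graded homomorphism. Once that maneuver is in place, both inequalities are immediate consequences of the surjectivity of $\phi$ together with the transfer of (a) an algebra generating set for the first inequality and (b) a finite-module presentation for the second.
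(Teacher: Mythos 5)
Your proposal is correct and follows essentially the same two-pronged approach as the paper: the $\beta$ inequality from the fact that surjections transport algebra generating sets, and the $\sigma$ inequality via Remark~\ref{hsop} from the fact that surjections transport finite-module presentations. Your write-up is just a more careful spelling-out of what the paper leaves implicit, in particular the homogeneous-components maneuver needed because $\phi$ is only assumed degree non-increasing rather than graded.
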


\begin{proof}
It is clear that $\beta_G(U_1) \geq \beta_H(U_2)$ since surjections preserve generating sets. For the null cone, the argument is slightly more involved, but follows from Remark~\ref{hsop} since surjections preserve finite extensions.
\end{proof}

 The source of such surjective maps for us will be Grosshans principle (\cite{Grosshans}). 
\begin{theorem} [Grosshans principle] \label{gross.princ}
Let $W$ be a representation of $G$, and let $H$ be a closed subgroup of $G$. Then we have an isomorphism
$$
\psi:(\C[G]^H \otimes \C [W])^G \longrightarrow \C[W]^H.
$$
\end{theorem}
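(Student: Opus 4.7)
The plan is to realize both sides as subspaces of $\C[G\times W]$ via the natural identification $\C[G]\otimes \C[W]=\C[G\times W]$, and then exhibit the isomorphism $\psi$ as ``evaluation at the identity $e\in G$.''

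First I would fix conventions. The group $G$ acts on $G$ by left translation $(g\cdot x=gx)$, and $H$ acts on $G$ by right translation $(h\cdot x=xh)$; these actions commute, so $\C[G]^H$ (where $H$-invariance refers to the right translation action) is a $G$-submodule of $\C[G]$ under left translation. The $G$-action on $\C[G]^H\otimes\C[W]$ is the diagonal one, using the given $G$-action on $W$. Since $H$ acts trivially on $\C[W]$, one checks directly (by writing elements in a basis of $\C[W]$) that
$$
\C[G]^H\otimes\C[W]=(\C[G]\otimes\C[W])^H=\C[G\times W]^H,
$$
so $(\C[G]^H\otimes\C[W])^G=\C[G\times W]^{G\times H}$, where $G$ acts diagonally and $H$ acts only on the first factor (by right translation).

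Next I would define $\psi$ by $\psi(F)(w)=F(e,w)$, where $e\in G$ is the identity. Given $F\in\C[G\times W]^{G\times H}$ and $h\in H$, one has $F(e,hw)=F(h,hw)$ by $H$-invariance (since $eh=h$), and $F(h,hw)=F(e,w)$ by $G$-invariance (applying $g=h$ to $(e,w)$). Thus $\psi(F)\in\C[W]^H$, so $\psi$ is a well-defined algebra homomorphism to $\C[W]^H$.

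For bijectivity I would construct an explicit inverse. Given $f\in\C[W]^H$, define $\Phi(f)\in\C[G\times W]$ by $\Phi(f)(g,w)=f(g^{-1}w)$; this is a regular function because the action morphism $G\times W\to W$ and inversion on $G$ are algebraic. A direct computation gives $\Phi(f)(g'g,g'w)=f(g^{-1}w)=\Phi(f)(g,w)$ ($G$-invariance), and $\Phi(f)(gh,w)=f(h^{-1}g^{-1}w)=f(g^{-1}w)=\Phi(f)(g,w)$ using $H$-invariance of $f$. So $\Phi(f)\in\C[G\times W]^{G\times H}$. The identities $\psi(\Phi(f))(w)=f(w)$ and $\Phi(\psi(F))(g,w)=\psi(F)(g^{-1}w)=F(e,g^{-1}w)=F(g,w)$ (the last step by $G$-invariance applied with the element $g$) show $\psi$ and $\Phi$ are mutually inverse.

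The main obstacle is mostly bookkeeping: one has to be careful that the left $G$-action and the right $H$-action on $\C[G]$ are set up so that they commute and induce the stated $G$-action on $\C[G]^H\otimes\C[W]$, and one must verify that the candidate inverse $\Phi(f)$ lands in $\C[G]^H\otimes\C[W]$ rather than merely in some completion. The latter follows because $\C[G\times W]$ is literally $\C[G]\otimes\C[W]$ as an algebra, and the $H$-invariants of this tensor product (with $H$ acting trivially on the second factor) coincide with $\C[G]^H\otimes\C[W]$, as noted above.
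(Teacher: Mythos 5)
Your proof is correct, and it takes a noticeably more bare-hands route than the paper. The paper's proof introduces two auxiliary $G\times H$-actions on the variety $G\times W$, computes the invariants of each separately (getting $\C[W]^H$ and $(\C[G]^H\otimes\C[W])^G$ respectively), and then exhibits the variety automorphism $(g,w)\mapsto(g,g\cdot w)$ as an intertwiner between the two actions; pulling back along this automorphism gives the desired isomorphism of invariant rings. You instead work with a single $G\times H$-action, realize the left-hand side as $\C[G\times W]^{G\times H}$, define $\psi$ directly as restriction to the slice $\{e\}\times W$, and write down the explicit inverse $\Phi(f)(g,w)=f(g^{-1}w)$, checking both compositions by hand. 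The two arguments are mathematically equivalent --- your $\psi$ is exactly what the paper's pullback becomes under the identification of action-1 invariants with $\C[W]^H$, and your $\Phi$ is the pullback along $(g,w)\mapsto(g,g^{-1}w)$ --- but yours is more elementary in that it avoids the two-actions bookkeeping and does not require verifying the intertwining identity. What the paper's packaging buys is that the $W$-grading preservation (needed later for the degree-nonincreasing surjection in Theorem~\ref{main}) falls out immediately from the fact that $(g,w)\mapsto(g,g\cdot w)$ is linear in $w$; your version also preserves the $W$-grading (restriction to $\{e\}\times W$ is degree-preserving, and $g^{-1}w$ is linear in $w$), but you would need to say so explicitly when the theorem is applied.
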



We will derive the following result from Grosshans principle.

\begin{theorem} \label{main}
Let $V,W$ be representations of $G$. Suppose $v \in V$ is such that its orbit $G \cdot v$ is closed. Let $H = {\rm Stab}_G(v) =\{g \in G \ |\ g \cdot v = v\}$ be a closed reductive subgroup of $G$. Then we have a degree non-increasing surjection
$$
\phi: \C[V \oplus W]^G \twoheadrightarrow \C[W]^H.
$$
In particular, we have
$$
 \beta_G(V \oplus W) \geq \beta_H(W) \text{ and }  \sigma_G(V \oplus W) \geq \sigma_H(W).
$$
\end{theorem}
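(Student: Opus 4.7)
The plan is to define $\phi$ explicitly as evaluation at $v$ in the first coordinate: for $f \in \C[V \oplus W]^G$, set $\phi(f)(w) = f(v,w)$. First I would verify that $\phi(f) \in \C[W]^H$: for any $h \in H$, the $G$-invariance of $f$ together with $h v = v$ gives
$$\phi(f)(hw) = f(v,hw) = f(h^{-1}v,w) = f(v,w) = \phi(f)(w).$$
The degree non-increasing property follows from a bidegree decomposition. If $f$ is homogeneous of total degree $d$, write $f = \sum_{i+j=d} f_{i,j}$ where $f_{i,j}$ is of bidegree $(i,j)$ on $V \oplus W$; then $\phi(f)(w) = \sum_{i+j=d} f_{i,j}(v,w)$, and each summand has degree $j \leq d$ in $w$, so $\deg \phi(f) \leq \deg f$.

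The substantive step is surjectivity, which I would deduce from Grosshans's principle by factoring $\phi$ through the closed orbit. Since $G \cdot v$ is closed in $V$, restriction yields a $G$-equivariant surjection $\C[V] \twoheadrightarrow \C[G \cdot v]$. The orbit map induces a bijective $G$-equivariant morphism $G/H \to G \cdot v$; as $H$ is reductive (by Matsushima, since $G \cdot v$ is affine) and both sides are smooth homogeneous spaces, this is an isomorphism, giving $\C[G \cdot v] \cong \C[G/H] = \C[G]^H$. Tensoring with $\C[W]$ and taking $G$-invariants (which preserves surjectivity because $G$ is reductive) produces the surjection
$$\C[V \oplus W]^G = (\C[V] \otimes \C[W])^G \twoheadrightarrow (\C[G]^H \otimes \C[W])^G,$$
and composing with Theorem \ref{gross.princ} gives a surjection onto $\C[W]^H$.

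The remaining task is to verify that this abstract composition agrees with the concrete map $\phi(f)(w) = f(v,w)$. Concretely, under $\C[G]^H \cong \C[G \cdot v]$ the Grosshans isomorphism $\psi$ sends a $G$-invariant $F$ on $G \cdot v \times W$ to $F(v,\cdot) \in \C[W]^H$, and the inverse is $g \mapsto \tilde g$ with $\tilde g(\gamma v, w) = g(\gamma^{-1}w)$ (well-defined by $H$-invariance of $g$, and $G$-invariant by direct check). Then the composition $\C[V\oplus W]^G \to \C[G \cdot v \times W]^G \to \C[W]^H$ is precisely restriction to $G \cdot v \times W$ followed by evaluation at $(v, \cdot)$, which equals $\phi$. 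This compatibility of the two descriptions is the only place where one must be careful; everything else is formal. Once $\phi$ is established as a degree non-increasing surjection, Lemma \ref{surj.bw.inv} immediately yields both inequalities $\beta_G(V \oplus W) \geq \beta_H(W)$ and $\sigma_G(V \oplus W) \geq \sigma_H(W)$.
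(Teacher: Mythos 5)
Your proof is correct and follows essentially the same route as the paper: factor through the closed orbit $G\cdot v$, identify $G/H\cong G\cdot v$ (the paper cites Zariski's main theorem, you use smoothness of homogeneous spaces), use reductivity to pass the surjection to invariants, and invoke Grosshans. Your added verification that the abstract composition is evaluation at $v$, and the bidegree argument for the degree bound, are concrete realizations of the paper's $W$-grading discussion rather than a different method.
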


In order to use this method for finding invariant rings for $G$ with large degree lower bounds, there are mainly three steps, each of which is relatively challenging. First, we have to show that the orbit of a certain point $v$ is closed. Next, we must compute its stabilizer $H$. Finally, we need to find a $G$-representation $W$ for which $\beta_H(W)$ is large. 

We develop the techniques in this paper in a general setup as we believe they are likely useful in many situations. To show that orbits are closed, we will develop a criterion using the moment map (see Theorem~\ref{crit.co}). We will pick our point carefully, so as to ensure that its stabilizer is a torus. For torus actions, it is relatively easier to construct examples with exponential lower bounds. 

One of the main intentions of this paper is to demonstrate that exponential lower bounds can be achieved for fairly simple representations of $\SL_n$. To this end, we are able to prove lower bounds for the action on $4$-tuples of cubic forms. We obtain:

\begin{theorem} \label{lbsln}
Let $V$ be a vector space of dimension $3n$. Then 
$$
\beta_{\SL(V)}(\Sym^3(V)^{\oplus 4}) \geq \sigma_{\SL(V)}(\Sym^3(V)^{\oplus 4}) \geq \textstyle\frac{2}{3} (4^n - 1).
$$
\end{theorem}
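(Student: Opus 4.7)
My plan is to apply Theorem \ref{main} with a carefully chosen $v \in \Sym^3(V)$, reducing to a torus action on the remaining cubic forms, and then to exhibit an exponentially large null-cone bound for the torus.

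Fix a decomposition $V = V_1 \oplus \cdots \oplus V_n$ into three-dimensional subspaces with bases $\{x_i, y_i, z_i\}$, and take $v = \sum_{i=1}^n x_i y_i z_i$ in one of the four copies of $\Sym^3(V)$; call $W = \Sym^3(V)^{\oplus 3}$ the remaining three. A direct computation shows that $H = \mathrm{Stab}_{\SL(V)}(v)$ has identity component the rank-$2n$ diagonal torus $T = \{\mathrm{diag}(a_1, b_1, c_1, \ldots, a_n, b_n, c_n) : a_i b_i c_i = 1 \text{ for all } i\}$, extended by a finite group $F$ arising from block permutations and within-block permutations. To verify that $\SL(V) \cdot v$ is closed, I would invoke the moment-map criterion (Theorem \ref{crit.co}): each summand $x_i y_i z_i$ has closed $\SL(V_i)$-orbit in $\Sym^3(V_i)$, and the direct-sum structure should propagate polystability.

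With this setup, Theorem \ref{main} yields $\sigma_{\SL(V)}(\Sym^3(V)^{\oplus 4}) \geq \sigma_H(W)$. Since $F$ normalizes $T$ and permutes $T$-invariants, the null cones for $T$ and $H$ on $W$ coincide, and as $\C[W]^H \subseteq \C[W]^T$ both cut out the same vanishing locus, one deduces $\sigma_H(W) \geq \sigma_T(W)$; similarly $\beta_H(W) \geq \sigma_H(W) \geq \sigma_T(W)$. It thus suffices to prove $\sigma_T(W) \geq \tfrac{2}{3}(4^n - 1)$. Via Hilbert–Mumford, this is equivalent to exhibiting a subset $S$ of the cubic $T$-weights on $W$ whose convex hull contains the origin (so any point supported on $S$ lies outside the null cone) while the minimal positive integer relation $\sum_{\omega \in S} c_\omega \omega = 0$ has $\sum c_\omega \geq \tfrac{2}{3}(4^n - 1)$ (so no invariant monomial of smaller degree is non-zero at that point).

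The main obstacle is this final combinatorial construction. Writing $X^*(T) \cong \Z^{2n}$ with basis $f_i = e_{i,1}, g_i = e_{i,2}$ and $e_{i,3} = -f_i - g_i$, a cubic weight is a sum of three elementary weights $e_{i,j}$, so its total $(f+g)$-coordinate sum is forced to be a multiple of three. This constraint rules out the standard doubling chain $\omega_k = -2 \alpha_k + \alpha_{k+1}$ in a single coordinate, and any construction confined to a one-dimensional projection of $X^*(T)$ would yield at most a $2^n$-type bound. Reaching the $4^n \approx 2^{2n}$ target requires weights that genuinely exploit both the $f$- and $g$-directions of the rank-$2n$ lattice simultaneously, through an intertwined cascade of cubic weights linked by transitional ones, together with a minimality verification for the resulting integer relation within the restricted cubic-weight cone. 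I expect this step to be the technical heart of the proof.
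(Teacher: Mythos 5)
Your overall strategy matches the paper at a high level (Theorem~\ref{main} plus an exponential torus bound), but you have made a different choice of base point, and that choice leaves a real gap at exactly the place you flag.

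Taking $v=\sum_i x_iy_iz_i$ in a single copy of $\Sym^3(V)$ gives a stabilizer that is a rank-$2n$ torus extended by a non-trivial finite group (within-block $S_3$'s together with block permutations, intersected with $\SL(V)$). Your reduction $\sigma_H(W)\geq\sigma_T(W)$ is fine, but you then have to prove an exponential bound for a rank-$2n$ torus acting on cubics, and as you correctly diagnose, the mod-$3$ congruence on $(f+g)$-coordinates of cubic weights rules out the obvious doubling chain in a single coordinate. The ``intertwined cascade'' you expect to need is precisely the missing piece; nothing in the proposal actually produces a set of cubic weights whose minimal non-negative integer relation has total degree $\geq\tfrac{2}{3}(4^n-1)$, nor verifies minimality over all cubic weights (not just a chosen chain). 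This is the heart of the matter and it is unresolved.

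The paper avoids both of these difficulties by spending \emph{three} copies of $\Sym^3(V)$ on the base point: $w=(\sum_i x_i^2z_i,\ \sum_i y_i^2z_i,\ \sum_i \alpha_i x_iy_iz_i)$ with generic unit scalars $\alpha_i$. The first two entries force any stabilizing $g$ to permute the $x_i$'s (resp.\ $y_i$'s, $z_i$'s) among themselves up to scalars; the third pins down the block permutation and the $x\leftrightarrow y$ ambiguity. The upshot (Proposition~\ref{stab.compute}) is that the stabilizer is \emph{exactly} the rank-$n$ torus $H$ with $t\cdot x_i=t_ix_i$, $t\cdot y_i=t_iy_i$, $t\cdot z_i=t_i^{-2}z_i$, with no finite extension to handle. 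Over this rank-$n$ torus, the congruence obstruction you identified disappears, and the single remaining copy $W=\Sym^3(V)$ carries the honest doubling chain on the $(n{+}1)$-dimensional subspace $U=\langle x_1z_2^2,\dots,x_nz_1^2,x_1^3\rangle$, whose weight matrix is the matrix $M$ of Section~\ref{prelim}; the unique minimal invariant monomial has degree $\tfrac{2}{3}(4^n-1)$, giving $\sigma_H(U)\geq\tfrac{2}{3}(4^n-1)$ and hence the theorem. (The paper's own Remark notes that one \emph{can} use the stabilizer of $(\sum x_i^2z_i,\sum y_i^2z_i)$, a finite extension of $H$, to shave one copy of $\Sym^3(V)$, but deliberately chooses not to pursue that route.) So the missing ingredients in your proposal are precisely (i) how to sidestep the finite extension and (ii) how to realize the doubling chain through cubic weights, and both are solved in the paper by the choice of base point rather than by extra combinatorics.
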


We note that $\dim(\Sym^3(V)^{\oplus 4}) = O(n^3)$, and $\dim(\SL(V)) = O(n^2)$. So, the group and the representation are polynomially sized in $n$, while the lower bound for the degree of generators is exponential in $n$.

Another goal of this paper is to gain a better understanding of the computational hardness of the invariant ring for tensor actions. More precisely, consider the action of $\SL(V_1) \times \SL(V_2) \times \dots \times \SL(V_d)$ on $(V_1 \otimes V_2 \otimes \dots \otimes V_d)^{\oplus m}$ defined on each copy of $V_1 \otimes V_2 \otimes \dots \otimes V_d $ by 
$$
(g_1,g_2,\dots,g_d) \cdot v_1 \otimes \dots \otimes v_d = g_1v_1 \otimes \dots \otimes g_dv_d.
$$
A major open problem in complexity is the problem of polynomial identity testing (PIT). A polynomial time algorithm for PIT would be a major step towards the celebrated P vs NP problem, see \cite{KI,GCT5} for details. The null cone membership and orbit closure intersection problems for various invariant rings are closely related to various subclasses of PIT problems, see \cite{GCT5}. 

The vital role of degree bounds is exemplified for the above tensor action in the case $d = 2$ (often called matrix semi-invariants). The `polynomial' degree bounds proved in \cite{DM,DM-arbchar} for matrix semi-invariants were instrumental in giving an algebraic polynomial time algorithm for the null cone membership and orbit closure algorithms in this case, see \cite{DM,DM2,IQS,IQS2}. The algorithm for the null cone problem for matrix semi-invariants gives a polynomial time algorithm for non-commutative rational identity testing. The orbit closure intersection problem solves another subclass of PIT problems in polynomial time. An analytic algorithm over $\Q$ for this subclass appears in \cite{AGOLW}. Despite the analytic nature of the algorithm, the polynomiality of degree bounds are crucial to show that the algorithm runs in polynomial time! In summary, degree bounds are an essential component in understanding the challenges and boundaries of algorithmic efficiency.

The cases when $d \geq 3$ have also been the subject of recent interest, see for example \cite{BGOWW}. We prove exponential lower bounds for these tensor actions. We show:

\begin{theorem} \label{tensor-lbs}
Suppose $V,W,Z$ are vector spaces of dimension $3n$. Then, for the tensor action of $G = \SL(V) \times \SL(W) \times \SL(Z)$ on $(V \otimes W \otimes Z)^{\oplus 9}$, we have
$$
\beta_G(V) \geq \sigma_G(V) \geq 4^n-1
$$
\end{theorem}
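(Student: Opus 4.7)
The plan is to apply Theorem~\ref{main}: find a point $v$ in two of the nine copies of $V\otimes W\otimes Z$ whose $G$-orbit is closed and whose stabilizer is a specific torus $H$, and then exhibit an $H$-invariant of degree $4^n - 1$ in the remaining seven copies.

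Fix bases $\{e_i\}, \{f_i\}, \{g_i\}$ of $V, W, Z$ (with $i = 1,\ldots, 3n$) and consider two ``diagonal'' tensors
\[
v_a = \sum_{i=1}^{3n} \alpha_i^{(a)}\, e_i \otimes f_i \otimes g_i, \qquad a = 1, 2,
\]
with generic coefficient vectors $\alpha^{(a)}$. By uniqueness of the rank-$3n$ decomposition of such a tensor (Kruskal's theorem), the only elements of $G$ stabilizing each $v_a$ are diagonal up to a common permutation of the indices; requiring that the ratios $\alpha_i^{(1)}/\alpha_i^{(2)}$ be pairwise distinct then forces that permutation to be trivial, so the joint stabilizer of $(v_1, v_2)$ is exactly
\[
H = \{(t,s,r) \in T : t_i s_i r_i = 1 \text{ for all } i\},
\]
a subtorus of the maximal torus $T$ of $G$ of rank $6n - 2$. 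Arranging $|\alpha_i^{(1)}|^2 + |\alpha_i^{(2)}|^2$ to be independent of $i$ makes the three $\SL$-components of the moment map vanish at $(v_1, v_2)$; the criterion in Theorem~\ref{crit.co} then shows that the $G$-orbit of $(v_1, v_2)$ is closed. Theorem~\ref{main} now yields
\[
\sigma_G\big((V\otimes W\otimes Z)^{\oplus 9}\big) \geq \sigma_H\big((V\otimes W\otimes Z)^{\oplus 7}\big).
\]

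For the torus bound, the $H$-weight of the basis tensor $e_i\otimes f_j\otimes g_k$ is the image of $\epsilon_i + \delta_j + \zeta_k$ in $X^*(T)/\langle \epsilon_\ell + \delta_\ell + \zeta_\ell\rangle$. An $H$-invariant monomial in $(V\otimes W\otimes Z)^{\oplus 7}$ corresponds to a nonnegative integer relation among such weights---one weight per variable, summed over the seven copies---with degree equal to the total multiplicity. I will select basis tensors from the seven copies whose $H$-weights are arranged so that the minimal indecomposable nonnegative relation has total multiplicity $4^n - 1$. The classical ``binary gadget'' construction produces weights whose minimal positive relation doubles at each of $\sim 2n$ stages, giving $2^{2n} - 1 = 4^n - 1$; the rank of $H$ being $6n - 2 \geq 2n$ leaves enough room.

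The main obstacle is this last combinatorial step: the weights $\bar\epsilon_i + \bar\delta_j + \bar\zeta_k$ have a rigid three-index structure and inherit the relations $\sum_i \bar\epsilon_i = \sum_j \bar\delta_j = 0$, so one must choose basis tensors carefully so that the exponential-gadget relation is realized and no shorter indecomposable relation survives. The other steps are routine: Kruskal uniqueness and the moment-map computation are standard, and the reductions via Theorem~\ref{main} and Lemma~\ref{surj.bw.inv} are direct. The stronger bound here (compared to the $\tfrac{2}{3}(4^n - 1)$ of Theorem~\ref{lbsln}) comes from the torus $H$ having rank $6n - 2$, roughly twice that of the maximal torus of $\SL(V)$ used in the cubic case, which affords more freedom in building the exponential gadget.
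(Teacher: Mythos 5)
Your framework is the same as the paper's---Theorem~\ref{main} with a closed orbit whose stabilizer is a torus, followed by a torus-invariant degree bound---but the realization is genuinely different at both ends, and the second end has a real gap.

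On the closed-orbit / stabilizer side, your choice is cleaner and does more with less. The paper uses $8$ of the $9$ copies (the tensors $F_1,G_1,\dots,F_4,G_4$), computes the stabilizer inside $\GL(U)\times\GL(V)\times\GL(W)$, and must then detour through the auxiliary group $J$ to pass to $\SL\times\SL\times\SL$. You use only $2$ copies (two generic diagonal tensors), compute the stabilizer directly inside $\SL(V)\times\SL(W)\times\SL(Z)$ via Kruskal uniqueness plus the distinct-ratio trick, and avoid the $J$-detour entirely. The resulting torus $H=\{(t,s,r)\in T : t_is_ir_i=1 \ \forall i\}$ has rank $6n-2$---roughly twice the rank of the paper's $H$ (which is $\cong(\C^*)^{3n}$)---and your moment-map check (constant $|\alpha_i^{(1)}|^2+|\alpha_i^{(2)}|^2$) is a correct application of Theorem~\ref{crit.co}. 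All of that is sound and leaves seven copies for the torus step.

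The gap is the torus step. You acknowledge it yourself: ``one must choose basis tensors carefully so that the exponential-gadget relation is realized and no shorter indecomposable relation survives,'' and you never do the choosing. That is not a routine detail---it is the entire content of the degree bound. The paper discharges this step concretely: it builds the $3n\times(3n-1)$ integer matrix $N$ in Section~\ref{prelim}, shows its kernel is spanned by $\left(1,2,2,2,8,\dots,2^{2n-1}\right)^t$ (giving $\mathcal{GI}(N)$ a single generator of total degree $4^n-1$), and then exhibits an explicit $(3n-1)$-dimensional $H$-invariant subspace $L$ of a \emph{single} copy of $U\otimes V\otimes W$ with weight basis $\mathcal{E}=\{u_1^1v_1^1w_1^1\}\cup\{u_1^{k+1}v_3^kw_3^k,\,u_3^kv_1^{k+1}w_3^k,\,u_1^kv_1^kw_3^{k+1}\}_{k}\cup\{u_3^nv_3^nw_3^n\}$ whose weight matrix $M_{\mathcal{E}}(L)$ equals $N$. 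Without a comparably explicit list of basis tensors and a verification that the resulting weight matrix has a one-dimensional kernel with an exponential minimal integral generator (and no smaller one hiding in the extra rank), the claimed $4^n-1$ is an aspiration, not a theorem. In your setting the weights have the form $\bar\epsilon_i+\bar\delta_j-\bar\epsilon_k-\bar\delta_k$ in a rank-$(6n-2)$ lattice, which is a different combinatorial landscape than the paper's, so the construction would have to be found afresh---one cannot simply cite the paper's matrix $N$, since that $N$ is built for the paper's $3n$-torus $H$, not yours.

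In short: steps $1$--$2$ (closed orbit, stabilizer, moment map, reduction via Theorem~\ref{main}) are correct and arguably neater than the paper's; step $3$ (the exponential torus gadget) is missing, and it is the step that produces the exponential.
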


Again, let us point out that the dimension of the group and representation are polynomial in $n$, but the lower bounds on the degree of generation is exponential in $n$.

\section{Preliminaries from linear algebra} \label{prelim}
We will first setup some preliminaries from linear algebra. An $n \times m$ matrix $A$ should be interpreted as a linear map $A: \Q^m \rightarrow \Q^n$. The null space of $A$ is defined as 
$$
\mathcal{Z}(A) = \{v \in \Q^m\ |\ Av = 0\}.
$$

We will be interested in non-negative integral points in the null space. So, we define 
$$
\mathcal{I}(A) = \mathcal{Z}(A) \cap \Z_{\geq 0}^m.
$$

Observe that $\mathcal{I}(A)$ is a monoid under addition. Further, we will be interested in the minimal generators of the monoid $\mathcal{I}(A)$. So, we define
$$
\mathcal{GI}(A) = \{v \in \mathcal{I}(A)\ |\ v \neq w_1 + w_2 \ \forall\ w_1,w_2 \in \mathcal{I}(A)\setminus\{0\}\}.
$$

It is easy to see that $\mathcal{GI}(A)$ is a minimal generating set for the monoid $\mathcal{I}(A)$.

We will be interested in computing this in two specific cases. The first is the $n \times (n+1)$ matrix
\begin{equation}
M = \begin{pmatrix}  
1 & 0 &\dots & \dots &  0 & -4 & 3 \\
-4 & 1 & \ddots &  & 0 & 0  & 0\\
0 & -4 &  \ddots & \ddots & \vdots & \vdots & \vdots\\
\vdots & 0 & \ddots & \ddots & \vdots & \vdots & \vdots \\
\vdots & \vdots & \ddots & \ddots & 1 & 0 & 0\\
0 & \dots & 0 & 0 & -4 & 1 & 0 
\end{pmatrix}
\end{equation}

\begin{lemma}
We have $\mathcal{Z}(M) = \Q \cdot \left(1,4,16,\dots,4^{n-1}, \frac{4^n -1}{3}\right)^t$.
\end{lemma}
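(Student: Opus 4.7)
The plan is to read off the linear system $Mv=0$ explicitly, solve the recurrence it encodes, and verify the resulting vector satisfies all constraints.

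Writing $v = (x_1, x_2, \dots, x_{n+1})^t$, the matrix entries give one equation per row. Rows $2$ through $n$ each have exactly two nonzero entries: a $-4$ in column $i-1$ and a $1$ in column $i$, yielding the recurrence
\[
x_i = 4 x_{i-1}, \qquad i = 2, \dots, n.
\]
Row $1$, on the other hand, has a $1$ in column $1$, a $-4$ in column $n$, and a $3$ in column $n+1$, giving the single extra equation
\[
x_1 - 4 x_n + 3 x_{n+1} = 0.
\]

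First I would iterate the recurrence to get $x_i = 4^{i-1} x_1$ for $i = 1, \dots, n$, so that in particular $x_n = 4^{n-1} x_1$. Substituting into the row-$1$ equation yields $x_1 - 4^n x_1 + 3 x_{n+1} = 0$, and hence $x_{n+1} = \frac{4^n - 1}{3} x_1$. This shows every solution is a scalar multiple of $\bigl(1, 4, 16, \dots, 4^{n-1}, \frac{4^n-1}{3}\bigr)^t$, so $\mathcal{Z}(M)$ is contained in the claimed line. The reverse containment is a direct (trivial) verification that this vector satisfies $Mv = 0$.

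To conclude that the kernel is genuinely $1$-dimensional rather than larger, I would note that rows $2, \dots, n$ have pivots in columns $1, 2, \dots, n-1$ respectively and are therefore linearly independent, while row $1$ has a nonzero entry in column $n+1$ which no other row touches, hence is independent of them; thus $\rk(M) = n$ and the nullspace is exactly one-dimensional. There is no real obstacle here beyond parsing the matrix carefully — the argument is purely a routine solve of a triangular system with one closing equation.
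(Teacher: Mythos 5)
Your argument is correct and is essentially the same as the paper's: both reduce to the rank-nullity theorem (the paper simply asserts $\rk(M) = n$ as "clear," while you spell out the pivot/last-column reasoning) and identify the spanning vector of the kernel. Your forward solve of the recurrence $x_i = 4x_{i-1}$ followed by the closing equation $x_1 - 4x_n + 3x_{n+1} = 0$ is a slightly more constructive presentation of the same verification the paper leaves to the reader.
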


\begin{proof}
It is clear that the matrix $M$ has full rank, i.e., $\rk(M) = n$. By the rank-nullity theorem, we know that $\mathcal{Z}(M)$ is $1$-dimensional. The lemma follows by checking that $M$ kills $\left(1,4,16,\dots,4^{n-1}, \frac{4^n -1}{3}\right)^t$.
\end{proof}

\begin{corollary} \label{gi.calc.m}
The set $\mathcal{GI}(M)$ consists of only one vector. Further, we have
$$
\mathcal{GI}(M) = \bigg\{ \left(1,4,16,\dots,4^{n-1}, \frac{4^n -1}{3}\right)^t \bigg\}$$

\end{corollary}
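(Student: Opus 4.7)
The plan is to combine the previous lemma, which pins down $\mathcal{Z}(M)$ as a one-dimensional $\Q$-subspace, with a short divisibility argument to identify all non-negative integer points on this line. Let $v=\left(1,4,16,\dots,4^{n-1},\frac{4^n-1}{3}\right)^t$, so that by the lemma $\mathcal{Z}(M) = \Q\cdot v$. The first thing to observe is that $v$ actually has integer coordinates: we have $4^n\equiv 1\pmod 3$, so $\frac{4^n-1}{3}\in\Z$, while all other entries are manifestly integers.

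Next, I would determine $\mathcal{I}(M) = \mathcal{Z}(M)\cap \Z_{\geq 0}^{n+1}$. Any element of $\mathcal{Z}(M)$ has the form $\lambda v$ for some $\lambda\in\Q$. The first coordinate of $\lambda v$ is $\lambda$, so $\lambda v \in \Z^{n+1}$ forces $\lambda\in\Z$, and non-negativity then forces $\lambda\in\Z_{\geq 0}$. Conversely, every such $\lambda v$ lies in $\Z_{\geq 0}^{n+1}$ because the coordinates of $v$ are themselves non-negative integers. Hence
\[
\mathcal{I}(M) = \Z_{\geq 0}\cdot v.
\]

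Finally, I would read off $\mathcal{GI}(M)$ from this description. The monoid $\Z_{\geq 0}\cdot v$ is freely generated by $v$: if $v = w_1+w_2$ with $w_1,w_2\in\mathcal{I}(M)\setminus\{0\}$, then $w_i=\lambda_i v$ with $\lambda_i\in\Z_{\geq 1}$ and $\lambda_1+\lambda_2=1$, which is impossible. Therefore $v\in\mathcal{GI}(M)$, while every other nonzero element of $\mathcal{I}(M)$ is of the form $\lambda v$ with $\lambda\geq 2$ and hence decomposes as $v+(\lambda-1)v$, so it is not in $\mathcal{GI}(M)$. This gives $\mathcal{GI}(M)=\{v\}$, as claimed.

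There is essentially no obstacle here once the previous lemma is in hand; the only point that might appear delicate is verifying that $v$ is a primitive integer vector in $\mathcal{Z}(M)$, and this is immediate from the fact that its first coordinate equals $1$.
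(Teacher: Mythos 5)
Your proof is correct and follows essentially the same approach as the paper's (use the one-dimensionality of $\mathcal{Z}(M)$ to conclude there can be at most one generator, then identify it). You make the final step slightly more explicit by observing that the first coordinate of $v$ is $1$, which immediately shows $v$ is the primitive integral vector on the ray; the paper simply asserts that the displayed vector is the smallest non-negative integral element without spelling this out.
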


\begin{proof}
Since $\mathcal{Z}(M)$ is $1$-dimensional, the set $\mathcal{GI}(M)$ consists of at most one element. This will be smallest non-negative integral element in $\mathcal{Z}(M)$, and this is the one given in the statement of the corollary.
\end{proof}

The second case we will be interested in is the $3n \times (3n-1)$ matrix 
$$
N = \begin{pmatrix}
B & I_3 &  &&& \\
 & P & I_3  &&& \\
 && P & \ddots  &&  \\
 &&& \ddots & I_3 & \\
 &&&& P & A\\
\end{pmatrix},
$$

Where $$
A = \begin{pmatrix} 1 \\ 1 \\1 \end{pmatrix}, P = \begin{pmatrix} -2 & -1 & -1 \\ -1 & -2 & -1 \\ -1 & -1 & -2 \end{pmatrix}, I_3 = \begin{pmatrix} 1 & & \\ & 1 & \\ &&1 \end{pmatrix}, \text{ and } B = \begin{pmatrix} -2 \\ -2 \\ -2 \end{pmatrix}.
$$

\begin{lemma}
We have $\mathcal{Z} (N) =  \Q \cdot \left(1,2,2,2,8,8,8,\dots, 2^{2n-3},2^{2n-3},2^{2n-3}, 2^{2n-1}\right)^t$
\end{lemma}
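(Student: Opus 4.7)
The plan is to parametrize $\mathcal{Z}(N)$ directly by exploiting the block structure of $N$, rather than computing $\rk(N)$. First I would write a candidate null vector $w \in \Q^{3n-1}$ in the block form $(w_0, w_1, \dots, w_{n-1}, w_n)^t$, where $w_0, w_n \in \Q$ and each $w_k \in \Q^3$ for $1 \leq k \leq n-1$, matching the block columns of $N$. The equation $Nw = 0$ then decomposes into $n$ block equations: the top block row gives $B w_0 + w_1 = 0$, so $w_1 = -B w_0 = 2 w_0 (1,1,1)^t$; the middle block rows give the recursion $w_k = -P w_{k-1}$ for $2 \leq k \leq n-1$; and the bottom block row gives $P w_{n-1} + A w_n = 0$.

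The crucial observation that makes the recursion collapse is that $P(1,1,1)^t = (-4,-4,-4)^t$, so $(1,1,1)^t$ is an eigenvector of $-P$ with eigenvalue $4$. Since $w_1$ is already a scalar multiple of $(1,1,1)^t$, a short induction shows that $w_k = 2^{2k-1} w_0 (1,1,1)^t$ for all $1 \leq k \leq n-1$. The final block row is then automatically consistent in direction (both $P w_{n-1}$ and $A w_n$ lie along $(1,1,1)^t$) and fixes the scalar $w_n = 2^{2n-1} w_0$. Hence every solution is a scalar multiple of the claimed vector, and conversely that vector is a genuine solution (it corresponds to $w_0 = 1$ in the construction).

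I do not foresee any serious obstacle. The argument is essentially a block back-substitution down a tridiagonal-like system, and the only non-mechanical ingredient is noticing that $(1,1,1)^t$ is a $(-P)$-eigenvector, which causes the three-dimensional recursion to degenerate into a one-dimensional one. An alternative route would be to prove $\rk(N) = 3n-2$ by exhibiting $3n-2$ linearly independent columns, but the direct parametrization is cleaner and will make the forthcoming analogue of Corollary~\ref{gi.calc.m} (the description of $\mathcal{GI}(N)$) nearly immediate, since the first entry of the generating vector equals $1$ and so the entries have no common factor.
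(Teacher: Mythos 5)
Your proof is correct and follows essentially the same route as the paper's: forward substitution through the system from top to bottom, which forces each successive group of three coordinates to be equal and scaled by a factor of $4$. The paper carries this out row by row (e.g.\ "$v_5 = 2v_2+v_3+v_4 = 4(2\alpha)$"), whereas you phrase it as a block recursion $w_k = -Pw_{k-1}$ and isolate the fact that $(1,1,1)^t$ is a $(-P)$-eigenvector with eigenvalue $4$; that is the same computation, just packaged more conceptually.
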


\begin{proof}
Suppose $v = (v_1,\dots,v_{3n-1})$ is such that $N v = 0$. Let us look at this as a system of $3n$ equations in $3n-1$ variables. As is well understood, each row gives one equation. Let us assume $v_1 = \alpha$. Now, we will go through the equations corresponding to the rows from top to bottom to deduce what $v_i$ have to be for $i \geq 1$. 

The first three rows imply that $v_2 = v_3 = v_4 = 2 \alpha$. The fourth row implies that $v_5 = 2v_2 + v_3 + v_4 = 4(2 \alpha)$. Similarly the fifth and sixth rows imply $v_6 = v_7 =  8 \alpha$. The process repeats until we get $v_{3n-4} = v_{3n-3} = v_{3n-2}  = 2^{2n-3} \alpha$. The last three equations all imply that $v_{3n-1} = 2^{2n-1} \alpha$.  In other words, we have $v = \alpha \cdot \left(1,2,2,2,8,8,8,\dots, 2^{2n-3},2^{2n-3},2^{2n-3}, 2^{2n-1}\right)^t$
\end{proof}

Using a similar argument to the case of $M$, we get:

\begin{corollary}
The set $\mathcal{GI}(N)$ consists of only one vector. Further, we have
$$
\mathcal{GI}(N) = \bigg\{\left(1,2,2,2,8,8,8,\dots, 2^{2n-3},2^{2n-3},2^{2n-3}, 2^{2n-1}\right)^t\bigg\}$$

\end{corollary}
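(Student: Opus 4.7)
The plan is to reproduce the two-line argument that established Corollary~\ref{gi.calc.m}, adapted to the matrix $N$. The preceding lemma has already shown that $\mathcal{Z}(N)$ is a one-dimensional $\Q$-subspace of $\Q^{3n-1}$, explicitly spanned by
$$
v = \left(1,2,2,2,8,8,8,\dots, 2^{2n-3},2^{2n-3},2^{2n-3}, 2^{2n-1}\right)^t.
$$
Hence every element of $\mathcal{I}(N) = \mathcal{Z}(N) \cap \Z_{\geq 0}^{3n-1}$ must be a non-negative rational scalar multiple $\alpha v$, and in particular $|\mathcal{GI}(N)| \leq 1$.

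Next I would use that the first coordinate of $v$ is $1$. For $\alpha v$ to have integer entries, the first coordinate $\alpha \cdot 1 = \alpha$ must already be an integer, and non-negativity then forces $\alpha \in \Z_{\geq 0}$. Since the other coordinates are integers (indeed powers of $2$), any such $\alpha$ produces a valid element of $\mathcal{I}(N)$. Therefore $\mathcal{I}(N) = \{k v \mid k \in \Z_{\geq 0}\}$, which is a free monoid on one generator, and by the definition of $\mathcal{GI}$ its unique minimal generator is $v$ itself.

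The substantive work lies entirely in the preceding lemma, where the recursion imposed by the blocks $B$, $P$, $I_3$, $A$ was unpacked to identify $\mathcal{Z}(N)$; once that null space is known and the first coordinate is seen to be $1$, the passage from $\mathcal{Z}(N)$ to $\mathcal{GI}(N)$ is purely formal and presents no further obstacle, exactly as in Corollary~\ref{gi.calc.m}.
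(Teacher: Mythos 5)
Your argument is correct and is exactly the one the paper intends: the paper simply says ``Using a similar argument to the case of $M$, we get:'' and relies on the same two observations (one-dimensional null space, first coordinate equal to $1$) that you spell out. No substantive difference.
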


\section{Invariants for torus actions}
We will briefly recall invariant theory for torus actions. Let $T = (\C^*)^n$ be an $n$-dimensional (complex) torus.  A group homomorphism $T \rightarrow \C^*$ is called a character of $T$. Given two characters $\lambda,\mu:T \rightarrow \C^*$, we define a character $\lambda + \mu :T \rightarrow \C^*$ defined by $(\lambda + \mu) (t) = \lambda(t) \mu(t)$. With this operation, the set of characters of $T$ form a group called the character group, which we denote by $\mathcal{X}(T)$. 

To each $\lambda = (\lambda_1,\dots,\lambda_n) \in \Z^n$, we can associate a character also denoted $\lambda$ by abuse of notation. The character $\lambda: T \rightarrow \C^*$ is defined by $\lambda(t) = \prod_{i=1}^n t_i^{\lambda_i}$. This gives an isomorphism of groups
$\Z^n \xrightarrow{\sim} \mathcal{X}(T)$. Characters of the torus are often called weights, and we will use this terminology as well.

Let $V$ be a rational representation of $T$. We make the identification $\mathcal{X}(T) = \Z^n$. For a weight $\lambda \in \Z^n$, the weight space $V_{\lambda} = \{v \in V\ |\ t \cdot v = \lambda(t) v \ \forall t \in T\}$. A vector $v \in V_{\lambda}$ is called a weight vector of weight $\lambda$. Any representation $V$ is a direct sum of its weight spaces, i.e., $V = \oplus_{\lambda \in \Z^n} V_{\lambda}$. In other words, we have a basis consisting of weight vectors. 

Let $\mathcal{E} = (e_1,\dots,e_m)$ be an ordered basis of $V$ consisting of weight vectors. Further, suppose each $e_i$ is a weight vector of weight $\lambda_i$. Let $x_1,\dots,x_m$ denote the coordinate functions with respect to  the basis $e_1,\dots,e_m$. The following are well known: 

\begin{enumerate}
\item A monomial $x^v = x_1^{v_1} x_2^{v_2} \dots x_m^{v_m}$ is an invariant monomial if and only if $ \sum_i v_i \lambda_i = 0$.
\item The ring of invariants $\C[V]^T$ is linearly spanned by such invariant monomials.
\end{enumerate}

We will rewrite the above results in a slightly different language. We will first need a definition.

\begin{definition} \label{wt.matrix}
Let $V$ be a representation of $T$ with an (ordered) weight basis $\mathcal{E} = (e_1,\dots,e_m)$. Further, suppose each $e_i$ is a weight vector of weight $\lambda_i$. Define $M_{\mathcal{E}}(V)$ to be the $n \times m$ matrix whose $i^{th}$ column is $\lambda_i$, i.e.,
$$
M_{\mathcal{E}}(V) := \begin{pmatrix} 
| & | & \dots & | \\
\lambda_1 & \lambda_2 & \dots &  \lambda_m \\
| & | & \dots & | 
\end{pmatrix}
$$
\end{definition}

\begin{remark}
For a different choice of ordered weight basis $\mathcal{E}'$, the matrix $M_{\mathcal{E}'}(V)$ is obtained by a permutation of the columns of $M_{\mathcal{E}}(V)$. This is because the formal sum of the columns (i.e., $\sum_i e^{\lambda_i}$) is called the character of the representation $V$ and independent of the choice of weight basis.
\end{remark}

\begin{proposition}
Let $V$ be a representation of $T$. Let $\mathcal{E} = (e_1,\dots,e_m)$ be a weight basis, and let $x_1,\dots,x_m$ be the corresponding coordinate functions. Then
\begin{enumerate}
\item For $v = (v_1,\dots,v_m) \in \mathcal{I}(M_\mathcal{E}(V))$, $x^v = x_1^{v_1} \dots x_m^{v_m}$ is an invariant monomial;
\item The set $\{x^v\ |\ v \in \mathcal{I}(M_\mathcal{E}(V)) \}$ is a $\C$-linear spanning set of invariants;
\item The set $\{x^v \ |\ v \in \mathcal{GI}(M_{\mathcal{E}}(V)) \}$ is a minimal set of generators for $\C[V]^T$.
\end{enumerate}
\end{proposition}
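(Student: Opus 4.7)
My plan is to identify $\C[V]^T$ with the monoid algebra on $\mathcal{I}(M_\mathcal{E}(V))$ and then read off all three claims directly from this identification.

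For parts (1) and (2), I would start from the dual action formula $t \cdot x_i = \lambda_i(t)^{-1} x_i$, from which $t \cdot x^v = \big(\prod_i \lambda_i(t)^{-v_i}\big)\, x^v$. Thus $x^v$ is invariant if and only if $\sum_i v_i \lambda_i = 0$ as characters, which is precisely the condition $M_\mathcal{E}(V) v = 0$ combined with $v \in \Z_{\geq 0}^m$; this gives (1). Moreover $\C[V]$ decomposes as a direct sum of $T$-weight spaces with each monomial $x^v$ living in weight $-\sum_i v_i \lambda_i$, so extracting the weight-zero component yields $\C[V]^T = \bigoplus_{v \in \mathcal{I}(M_\mathcal{E}(V))} \C \cdot x^v$. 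This is in fact a $\C$-basis, which is a slight strengthening of (2).

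For (3), since $x^v x^w = x^{v+w}$, the ring $\C[V]^T$ is isomorphic to the monoid algebra on $\mathcal{I}(M_\mathcal{E}(V))$. That $\{x^v : v \in \mathcal{GI}(M_\mathcal{E}(V))\}$ generates this algebra as a $\C$-algebra is then immediate from the definition of $\mathcal{GI}$: every $v \in \mathcal{I}(M_\mathcal{E}(V))$ is a sum of elements of $\mathcal{GI}(M_\mathcal{E}(V))$, so every $x^v$ is a product of elements of the proposed generating set. For minimality, I would apply graded Nakayama to the positively graded maximal ideal $\mathfrak{m} \subseteq \C[V]^T$ spanned by $\{x^v : v \in \mathcal{I}(M_\mathcal{E}(V)) \setminus \{0\}\}$; the subspace $\mathfrak{m}^2$ is $\C$-spanned by the monomials $x^v$ with $v$ monoid-decomposable in $\mathcal{I}(M_\mathcal{E}(V))$, so $\mathfrak{m}/\mathfrak{m}^2$ has a basis indexed by $\mathcal{GI}(M_\mathcal{E}(V))$, and any minimal homogeneous generating set must have this cardinality.

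The calculations are routine; the only subtle point is in (3), where an arbitrary minimal generating set need not a priori consist of monomials, so I cannot argue purely combinatorially on exponent vectors. Graded Nakayama bypasses this by working at the level of $\mathfrak{m}/\mathfrak{m}^2$, where the monomial structure is forced.
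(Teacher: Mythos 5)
Your proposal is correct and follows the same route as the paper: parts (1) and (2) are read off from the weight decomposition of $\C[V]$, and part (3) comes down to the fact that $\mathcal{GI}(A)$ minimally generates the monoid $\mathcal{I}(A)$. The one thing you add beyond the paper's terse proof is the graded Nakayama argument, which correctly addresses the subtlety (a minimal set of algebra generators need not a priori be monomial) that the paper leaves implicit when it passes from ``minimal monoid generators'' to ``minimal algebra generators.''
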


\begin{proof}
The first two statements is simply a rephrasing of the discussion before Definition~\ref{wt.matrix}. The last follows from the fact that for any matrix $A$, the set $\mathcal{GI}(A)$ is a minimal generating set for the monoid $\mathcal{I}(A)$.
\end{proof}

\begin{proposition} \label{tor.inv.M}
Let $T$ act on $V = \C^{n+1}$ such that for some weight basis $\mathcal{E}$, we have $M_{\mathcal{E}}(V) = M$, the matrix in Section~\ref{prelim}. Then, we have
$$
\beta_T(V) \geq \sigma_T(V) \geq \textstyle\frac{2}{3} (4^n - 1).
$$
\end{proposition}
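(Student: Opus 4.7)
The plan is to exploit the previous proposition together with Corollary~\ref{gi.calc.m}, which tells us that $\mathcal{GI}(M)$ has a single element
$$
v_0 = \left(1, 4, 16, \dots, 4^{n-1}, \tfrac{4^n - 1}{3}\right)^t.
$$
By part (3) of the proposition relating $\mathcal{GI}$ to generators, the monomial $x^{v_0}$ is a minimal generating set (of size one) for $\C[V]^T$. Thus $\C[V]^T = \C[x^{v_0}]$ is a polynomial ring in a single variable of degree $|v_0| = 1 + 4 + \cdots + 4^{n-1} + \tfrac{4^n-1}{3}$. A quick geometric sum shows $|v_0| = \tfrac{4^n - 1}{3} + \tfrac{4^n - 1}{3} = \tfrac{2}{3}(4^n - 1)$.

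First I would deduce the lower bound for $\beta_T(V)$. Since $x^{v_0}$ is the unique generator, no set of invariants of degree strictly less than $|v_0|$ can generate $\C[V]^T$ (such invariants are all constants). Hence $\beta_T(V) \geq \tfrac{2}{3}(4^n - 1)$.

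Next I would handle $\sigma_T(V)$ using Remark~\ref{hsop}, which characterizes $\sigma_T(V)$ as the smallest $D$ such that $\C[V]^T$ is a finite extension of the subalgebra generated by $\bigoplus_{d=0}^D \C[V]^T_d$. For any $D < |v_0|$, every homogeneous invariant of degree $\leq D$ is a constant (since the only monomials in $x^{v_0}$ have degrees that are multiples of $|v_0|$), so this subalgebra equals $\C$. But $\C[V]^T = \C[x^{v_0}]$ is not module-finite over $\C$, so $\sigma_T(V) \geq |v_0| = \tfrac{2}{3}(4^n - 1)$. Combining the two inequalities with the trivial $\beta_T(V) \geq \sigma_T(V)$ (already noted after Definition~1.2) yields the claim.

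There is no real obstacle here: the work has been done in Section~2, where the arithmetic computation identifying $\mathcal{GI}(M)$ was the technical content. The argument above is essentially a direct application of that computation together with the observation that a single-generator invariant ring has both $\beta$ and $\sigma$ equal to the degree of the unique generator.
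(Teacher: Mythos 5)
Your proof is correct and follows essentially the same path as the paper: identify $\mathcal{GI}(M)$ via Corollary~\ref{gi.calc.m}, conclude $\C[V]^T = \C[x^{v_0}]$, and read off the degree $\frac{2}{3}(4^n-1)$. The one small divergence is in how you handle $\sigma_T(V)$: the paper exhibits an explicit witness $v = e_1 + \cdots + e_{n+1}$ with $f(v) \neq 0$ to show the null cone is a proper subvariety not cut out by low-degree invariants, whereas you appeal to the equivalent characterization of $\sigma$ in Remark~\ref{hsop} and observe that $\C[x^{v_0}]$ is not module-finite over $\C$. Both are valid and roughly equal in effort; the paper's version is slightly more self-contained (it doesn't lean on the unproved Remark), while yours is a touch cleaner if one already accepts that remark.
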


\begin{proof}
Let $\mathcal{E} = e_1,\dots,e_{n+1}$. Let $x_1,\dots,x_{n+1}$ be the coordinates with respect to this basis. From the above proposition, we know that $\{x^v\ |\ v \in \mathcal{GI}(M)\}$ is a minimal set of generators for the invariant ring. Corollary~\ref{gi.calc.m} tells us that $\mathcal{GI}(M)$ consists of precisely one element. The corresponding monomial is $f := x_1x_2^4x_3^{16} \dots x_n^{4^{n-1}} x_{n+1}^{(4^n-1)/3}$. To summarize, we have $\C[V]^T = \C[f]$. 

It is clear that $f$ has degree $(1 + 4 + \dots 4^{n-1} + \frac{4^n-1}{3}) = \frac{2}{3} (4^n - 1)$. This already gives us that $\beta_T(V) \geq \frac{2}{3} (4^n-1)$. Now, consider $v = e_1 + \dots + e_{n+1}$. Then, we have $f(v) \neq 0$, so $v$ is not in the null cone. Since there are no non-constant homogeneous invariants of smaller degree, it follows that $\sigma_T(V) \geq \frac{2}{3} (4^n-1)$.

\end{proof}

A similar argument gives the following:

\begin{proposition} \label{tor.inv.N}
Let $T = (\C^*)^{3n}$ act on an $V = \C^{3n-1}$ such that for some weight basis $\mathcal{E}$, we have $M_{\mathcal{E}}(V) = N$, the matrix in Section~\ref{prelim}. Then, we have
$$
\beta_T(V) \geq \sigma_T(V) \geq 4^n-1
$$
\end{proposition}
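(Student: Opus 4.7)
The plan is to mimic the proof of Proposition~\ref{tor.inv.M} essentially verbatim, with the only nontrivial task being the degree computation for the single generator coming from the corollary on $\mathcal{GI}(N)$. First I would fix a weight basis $\mathcal{E} = (e_1,\dots,e_{3n-1})$ with coordinates $x_1,\dots,x_{3n-1}$. By the corollary immediately following the description of $N$, the set $\mathcal{GI}(N)$ consists of the single vector
$$\textstyle u := \left(1,2,2,2,8,8,8,\dots,2^{2n-3},2^{2n-3},2^{2n-3},2^{2n-1}\right)^t,$$
so by the proposition describing invariants of tori in terms of $\mathcal{GI}(M_{\mathcal{E}}(V))$, the invariant ring is the polynomial ring in one variable generated by $f := x^u$.

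The next step is simply to add up the entries of $u$ to obtain $\deg f$. Grouping the $n-1$ blocks of three equal entries $2,8,\dots,2^{2n-3}$ together and using the geometric series identity $2+8+\cdots+2^{2n-3} = \frac{2(4^{n-1}-1)}{3}$, one computes
$$\deg f = 1 + 3\cdot\tfrac{2(4^{n-1}-1)}{3} + 2^{2n-1} = 1 + 2(4^{n-1}-1) + 2\cdot 4^{n-1} = 4^n - 1.$$
Since $f$ is the unique (up to scalar) minimal homogeneous generator of $\C[V]^T$, this already yields $\beta_T(V) \geq 4^n-1$.

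For the bound on $\sigma_T(V)$, I would take $v = e_1 + e_2 + \cdots + e_{3n-1}$. Since every coordinate of $v$ is $1$, we have $f(v) = 1 \neq 0$, so $v$ does not lie in the null cone. But $\C[V]^T_d = 0$ for $0 < d < 4^n - 1$ (as every invariant is a polynomial in $f$), so no non-constant homogeneous invariant of degree less than $4^n - 1$ can detect $v$. Hence $\sigma_T(V) \geq 4^n - 1$, completing the argument. There is no real obstacle here beyond executing the degree sum carefully; the whole proof is essentially a copy of the proof of Proposition~\ref{tor.inv.M}, with the matrix $M$ replaced by $N$ and the arithmetic redone.
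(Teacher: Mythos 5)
Your proof is correct and is exactly the argument the paper has in mind: the paper states Proposition~\ref{tor.inv.N} with only the remark ``A similar argument gives the following,'' referring to the proof of Proposition~\ref{tor.inv.M}, which you have transcribed faithfully with the arithmetic $1 + 3\cdot\frac{2(4^{n-1}-1)}{3} + 2^{2n-1} = 4^n - 1$ carried out correctly.
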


\begin{proposition} \label{tor.inv.compare}
Suppose $V \subseteq W$ are two representations of $T$, then 
$$
\beta_T(V) \leq \beta_T(W) \text{ and } \sigma_T(V) \leq \sigma_T(W).
$$
\end{proposition}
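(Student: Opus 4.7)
The plan is to construct a degree-preserving surjection $\C[W]^T \twoheadrightarrow \C[V]^T$ and then apply Lemma~\ref{surj.bw.inv} with $G = H = T$, $U_1 = W$ and $U_2 = V$, which immediately yields both inequalities.

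To build the surjection, I would exploit the reductivity of the torus $T$: the $T$-subrepresentation $V \subseteq W$ admits a $T$-stable complement $V'$, so that $W = V \oplus V'$ as $T$-representations. Very concretely, within each weight space $W_\lambda$ one picks any linear complement of $V_\lambda$; since $T$ acts by the scalar $\lambda(t)$ on $W_\lambda$, every subspace is automatically $T$-stable. The resulting $T$-equivariant projection $\pi: W \twoheadrightarrow V$ and inclusion $i: V \hookrightarrow W$ satisfy $\pi \circ i = \operatorname{id}_V$. Dualizing gives $T$-equivariant, degree-preserving maps $\pi^*: \C[V] \hookrightarrow \C[W]$ and $i^*: \C[W] \twoheadrightarrow \C[V]$ (the latter being restriction of functions) with $i^* \circ \pi^* = \operatorname{id}_{\C[V]}$. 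Taking $T$-invariants preserves this identity relation, so the restriction of $i^*$ to $\C[W]^T$ is still a degree-preserving (hence degree non-increasing) surjection onto $\C[V]^T$, which is exactly what Lemma~\ref{surj.bw.inv} needs.

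There is no real obstacle to overcome: the lone substantive input is the existence of a $T$-stable complement, which is automatic for a torus in characteristic zero via the weight-space decomposition. Everything else is the formal observation that restriction of invariants is well-behaved with respect to a linear inclusion of representations of a reductive group.
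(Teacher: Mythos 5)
Your argument is essentially the same as the paper's: both decompose $W = V \oplus V'$ using complete reducibility of $T$-modules, observe that restriction along the inclusion $V \hookrightarrow W$ gives a degree non-increasing surjection $\C[W]^T \twoheadrightarrow \C[V]^T$ (the paper exhibits a preimage $\wt{f}(v,v') = f(v)$, which is precisely your $\pi^*(f)$), and then invoke Lemma~\ref{surj.bw.inv}. Your phrasing via the section $i^* \circ \pi^* = \operatorname{id}_{\C[V]}$ is a slightly cleaner way of organizing the same surjectivity check, but there is no substantive difference.
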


\begin{proof}
Representations of tori are completely reducible, so we have $W = V \oplus V'$, where $V'$ is also a subrepresentation of $T$. The inclusion $V \hookrightarrow W$ gives a surjection $\pi: \C[W] \rightarrow \C[V]$ that is clearly degree non-increasing. It is easy to check that $\pi$ descends to a map of invariant rings $\C[W]^T \rightarrow \C[V]^T$. We claim that this is a surjection. Indeed, for $f \in \C[V]^T$, define $\wt{f}$ by $\wt{f}(v,v') = f(v)$ for all $(v,v') \in V \oplus V' = W$. Clearly $\wt{f} \in \C[W]^T$ and $\pi(\wt{f}) = f$. The fact that the surjection $\pi: \C[W]^T \twoheadrightarrow \C[V]^T$ is degree non-increasing implies both statements by Lemma~\ref{surj.bw.inv}.
\end{proof}

\section{Grosshans principle}
First let us note that for any vector space $U$, the coordinate ring $\C[U] = \Sym(U^*)$ is a polynomial ring, and hence we have a grading $\C[U] = \oplus_{d=0}^{\infty} \C[U]_d$. We will call this the polynomial grading.

For any vector space $W$, and any ring $R$, we can define a grading on $R \otimes \C[W]$ by setting $(R \otimes \C[W])_d = R \otimes \C[W]_d$. We will call this the $W$-grading.

We will first give an outline of the proof of Grosshans principle, i.e., Theorem~\ref{gross.princ}. 
\begin{proof} [Proof of Theorem~\ref{gross.princ}]

Consider the action of $G \times H$ on $G \times W$ by 
$$
(g',h') \cdot (g,w) = (g'g(h')^{-1}, h' \cdot w).
$$
Let us compute the ring of invariants $\C[G \times W]^{G \times H}$. First, let us observe that the action of $G$ is trivial on $W$, so we have $\C[G \times W]^G = (\C[G] \otimes \C[W])^G = \C[G]^G \otimes \C[W] = \C[W]$. Hence, we have
$$
\C[G \times W]^{G \times H} = (\C[G \times W]^G)^H = \C[W]^H.
$$

Now, let us consider another action of $G \times H$ on $G \times W$ given by 
$$
(g,w) \cdot (g,w) = (g'g(h')^{-1}, g' \cdot w).
$$
 In this case, $H$ acts trivially on $W$, so we have $\C[G \times W]^H = (\C[G] \otimes \C[W])^H = \C[G]^H \otimes \C[W]$. Hence, we have 
$$
\C[G \times W]^{G \times H}  = (\C[G \times W]^H)^G = (\C[G]^H \otimes \C[W])^G.
$$

We have both sides of Grosshans principle, and now we need to relate them. Consider the map
\begin{align*}
\psi : G \times W  \rightarrow  G \times W\\
(g,w) \mapsto  (g,g\cdot w)
\end{align*}

This gives a map on the coordinate rings, which we will also denote by  
$\psi: \C[G \times W] \isom \C[G \times W]$. There is a $W$-grading on $\C[G \times W]$ because $\C[G \times W] = \C[G] \otimes \C[W]$. Since $G$ acts by linear transformations, the map $\psi$ preserves the $W$-grading.

Now, observe that $\psi$ takes the first action of $G \times H$ to the second action of $G \times H$. In particular, this means that the map $\psi$ restricts to an isomorphism of the invariant rings
$$
\psi: (\C[G]^H \otimes \C[W])^G \rightarrow \C[W]^H.
$$

Observe that $\C[W]^H$ is a $W$-graded subring of $\C[G \times W]$. Since $\psi$ is an isomorphism that preserves the $W$-grading, $(\C[G]^H \otimes \C[W])^G$ is also a $W$-graded subring.
\end{proof}

Let us recall Matsushima's criterion, see \cite{Matsushima, Bial}.

\begin{theorem} [Matsushima] \label{Matsu}
Let $G$ be a reductive group, and let $H$ be a closed subgroup. Then $H$ is reductive if and only if $G/H$ is an affine variety.
\end{theorem}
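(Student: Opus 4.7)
The plan is to prove the two implications separately. The forward direction is a standard consequence of Hilbert--Nagata, while the converse contains the substantive content.

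For the forward direction, assume $H$ is reductive. By Chevalley, $G/H$ always exists as a quasi-projective variety, and its function ring is naturally identified with $\C[G]^H$. Hilbert--Nagata (applicable because $H$ is reductive) guarantees that $\C[G]^H$ is a finitely generated $\C$-algebra, so $\mathrm{Spec}(\C[G]^H)$ is affine, and a dimension-and-birationality argument identifies it with $G/H$, giving the conclusion.

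For the converse, assume $G/H$ is affine. Since $\C[G/H]$ is a rational $G$-module, I would select a finite-dimensional $G$-subrepresentation $V^* \subset \C[G/H]$ whose elements generate $\C[G/H]$ as a $\C$-algebra; this yields an equivariant closed embedding $G/H \hookrightarrow V$. The base point $eH$ maps to a vector $v \in V$ whose orbit $G \cdot v \cong G/H$ is closed in $V$ and whose stabilizer is $H$. At this stage I would invoke Kempf--Ness: fixing a maximal compact subgroup $K \subset G$ and a $K$-invariant Hermitian form on $V$, the function $g \mapsto |g \cdot v|^2$ attains a minimum on the closed orbit at some point $v_0$, and a first-variation argument shows that the Lie algebra of $\mathrm{Stab}_G(v_0)$ is stable under the Cartan involution $\theta$ associated with $K$. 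A $\theta$-stable subalgebra is the complexification of a compact subalgebra, so $\mathrm{Stab}_G(v_0)$ is reductive; being conjugate to $H$, this forces $H$ itself to be reductive.

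The main obstacle is the Kempf--Ness step, which supplies both the existence of the minimizer on a closed orbit and the $\theta$-stability of its stabilizer. This input is morally very close to the moment map criterion developed later in Theorem~\ref{crit.co}, and in the context of this paper one could plausibly reprove it by direct moment-map arguments on the norm-squared function; in practice, however, the theorem is classical, and I would simply cite the proofs in \cite{Matsushima, Bial} rather than reproduce the delicate analysis.
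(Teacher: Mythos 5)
The paper does not prove Matsushima's theorem; it simply states it and cites \cite{Matsushima, Bial}, and your final paragraph ends up making the same editorial choice. That said, the route you sketch---embed $G/H$ equivariantly as a closed orbit in a $G$-module, take a minimal vector, and use Kempf--Ness to see that its stabilizer is self-adjoint and hence reductive---is a correct and classical proof (it is the Kempf--Ness proof), and it does dovetail with the moment-map machinery the paper develops in Theorem~\ref{crit.co}. Two imprecisions are worth flagging. In the forward direction, finite generation of $\C[G]^H$ via Haboush--Nagata does not by itself identify $G/H$ with $\mathrm{Spec}(\C[G]^H)$; the real mechanism is that right $H$-orbits in $G$ are closed cosets, so when $H$ is reductive the categorical quotient separates them and is in fact the geometric quotient. ``Dimension-and-birationality'' glosses over this. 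In the converse, the $\theta$-stability of $\mathrm{Stab}_G(v_0)$ at a minimal vector is not a mere first-variation computation (first variation only gives vanishing of the moment map, i.e.\ criticality); deducing self-adjointness of the stabilizer uses convexity of the Kempf--Ness function and minimality, and is the substantive part of the Kempf--Ness theorem. Since you defer to the literature anyway---just as the authors do---neither of these is a fatal gap, but they should be named rather than waved at.
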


An immediate consequence is the following:

\begin{corollary}
Let $G$ be a reductive group, and let $H$ be a closed reductive subgroup. Then we have an isomorphism
$$
\C[G]^H \isom \C[G/H].
$$
\end{corollary}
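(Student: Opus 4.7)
The plan is to realize both sides as the coordinate ring of the same affine variety. Consider the right translation action of $H$ on $G$ given by $(g,h) \mapsto gh$, with quotient map $\pi: G \to G/H$. First, I would invoke Matsushima's criterion (Theorem~\ref{Matsu}): since $H$ is reductive, $G/H$ is an affine variety. Because $\pi$ is constant on right $H$-cosets, pullback by $\pi$ defines a ring homomorphism $\pi^*: \C[G/H] \to \C[G]$ whose image lies in the subring $\C[G]^H$ of invariants for the right action, and $\pi^*$ is injective since $\pi$ is surjective on points.

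The remaining task is to show that $\pi^*: \C[G/H] \to \C[G]^H$ is surjective. Given $f \in \C[G]^H$, its $H$-invariance forces it to factor through a well-defined set-theoretic function $\bar f: G/H \to \C$, and one must verify that $\bar f$ is a regular function on the variety $G/H$. This is the universal property of $\pi$ as a geometric quotient of varieties: a function on $G/H$ is regular exactly when its pullback along $\pi$ to $G$ is regular. Applied to $\bar f$, whose pullback is $f$, this yields $\bar f \in \C[G/H]$, hence $f \in \pi^*(\C[G/H])$.

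The hard part is essentially bypassed by Matsushima; the substantive content is the very existence of $G/H$ as an affine variety. An equivalent way to package the same argument is to observe that, by definition, the categorical (GIT) quotient of $G$ by the reductive group $H$ is $\mathrm{Spec}(\C[G]^H)$, and once $G/H$ is known to be affine it must agree with this GIT quotient, giving the identification $\C[G]^H \cong \C[G/H]$ directly, with no further descent argument required.
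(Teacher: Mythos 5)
Your proof is correct and follows the same route the paper has in mind (the paper states this corollary without proof as an ``immediate consequence'' of Matsushima). You invoke Matsushima to get affineness of $G/H$, and then identify $\C[G]^H$ with $\C[G/H]$ via the structure of $G/H$ as a geometric (equivalently, once affine, GIT) quotient; this is precisely the content being treated as immediate.

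One small point worth making explicit: the step asserting that a set-theoretic function on $G/H$ whose pullback to $G$ is regular is itself regular is not a consequence of Matsushima but of the general theorem of Chevalley that $G/H$ exists as a variety and $\pi: G \to G/H$ is a geometric quotient (faithfully flat, with $\mathcal{O}_{G/H} = (\pi_*\mathcal{O}_G)^H$); Matsushima's contribution is only to upgrade $G/H$ from quasi-projective to affine. Your third paragraph, identifying $G/H$ with $\operatorname{Spec}(\C[G]^H)$ via the uniqueness of categorical quotients among affine schemes, is the cleanest way to package this and avoids the need to sort through the descent of regularity by hand.
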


We need one more lemma before we prove Theorem~\ref{main}.
\begin{lemma} \label{need.char0}
Let $Y$ be a variety with an action of a reductive group $G$. Suppose $X$ is a closed $G$-stable subvariety of $X$, then we have a surjection $\C[Y]^G \twoheadrightarrow \C[X]^G$.
\end{lemma}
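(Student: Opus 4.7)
The plan is to exploit the fact that $G$ is reductive, via the Reynolds operator, after first producing the natural restriction map on coordinate rings.

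First I would observe that because $X$ is a closed subvariety of $Y$, the inclusion $X \hookrightarrow Y$ induces a surjective restriction homomorphism $r : \C[Y] \twoheadrightarrow \C[X]$ whose kernel is the vanishing ideal $I(X)$. Because $X$ is $G$-stable, $I(X)$ is a $G$-submodule of $\C[Y]$, so the map $r$ is $G$-equivariant. Restricting to invariants gives a well-defined homomorphism $r^G : \C[Y]^G \to \C[X]^G$, and the only thing that needs proof is surjectivity.

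For surjectivity I would use the Reynolds operators $R_Y : \C[Y] \to \C[Y]^G$ and $R_X : \C[X] \to \C[X]^G$, which exist because $G$ is reductive (every rational $G$-module is completely reducible, and the Reynolds operator is the projection onto the trivial isotypic component). The key naturality property is that the Reynolds operator commutes with any $G$-equivariant linear map; applied to $r$, this gives $R_X \circ r = r \circ R_Y$. Now, given any $f \in \C[X]^G$, pick an arbitrary lift $\widetilde{f} \in \C[Y]$ with $r(\widetilde{f}) = f$ (possible since $r$ is surjective), and set $F = R_Y(\widetilde{f}) \in \C[Y]^G$. Then
\[
r^G(F) = r(R_Y(\widetilde{f})) = R_X(r(\widetilde{f})) = R_X(f) = f,
\]
where the last equality uses that $f$ is already $G$-invariant, so the Reynolds operator fixes it. This exhibits $f$ as the image of $F$, proving surjectivity.

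I do not expect any serious obstacle here: the argument is the standard reductive-quotient yoga and relies only on the existence and naturality of the Reynolds operator, which is available since we are working in characteristic zero with a reductive $G$. The only care needed is to make sure that $I(X)$ is a $G$-submodule, which is immediate from $G$-stability of $X$, and that we apply the Reynolds operator on the correct side, which is why the equivariance of $r$ is singled out above.
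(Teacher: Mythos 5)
Your proof is correct and follows essentially the same strategy as the paper's: both use the Reynolds operators for $X$ and $Y$ together with the commutativity of the square formed with the restriction map $r$. The only cosmetic difference is that you invoke the naturality of the Reynolds operator as a known fact and then do an explicit element chase (lift $f$, average, restrict), whereas the paper proves the commutativity of that square from scratch by decomposing $\C[Y]$ into isotypic components and then concludes surjectivity of $r^G$ from surjectivity of the other three maps in the square.
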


\begin{proof}
Since $G$ is reductive, we have Reynolds operators $R_Y: \C[Y] \twoheadrightarrow \C[Y]^G$ and $R_X: \C[X] \twoheadrightarrow \C[X]^G$. Let $i: X \hookrightarrow Y$ denote the inclusion map, and let the pull back map on the coordinate rings be $i^*: \C[Y] \twoheadrightarrow \C[X]$. In the following diagram, the horizontal arrows are Reynolds operators and the vertical arrows are given by $i^*$.

\begin{center}
\begin{tikzcd}
\C[Y] \arrow[r] \arrow[d]
& \C[Y]^{G} \arrow[d] \\
\C[X] \arrow[r]
& \C[X]^G
\end{tikzcd}
\end{center}
The above diagram commutes. To see this, let us observe that $\C[Y]$ can be decomposed as a direct sum of irreducibles. So, it suffices to see that the diagram commutes for each isotypic component. The isotypic components for non-trivial representations get killed by the Reynolds operators, so both directions send them to zero. The Reynolds operators act by identity on trivial representations. So, in either direction the isotypic component for the trivial representation is only subject to $i^*$.

Hence, the map $i^*: \C[Y]^G \rightarrow \C[X]^G$ must be a surjection since the other three maps are surjections.
\end{proof}

\begin{proof} [Proof of Theorem~\ref{main}]
By the above discussion, the Grosshans principle in this case reads as:
$$
\C[G/H \times W]^G \isom (\C[G]^H \otimes \C[W])^G \isom \C[W]^H.
$$

Observe that $G/H \cong G \cdot v$ as affine varieties \footnote{This follows essentially from Zariski's main theorem, see for eg \cite[Theorem~25.1.2(iv)]{TY}}, and thus we have
$$
G/H \times W \isom G \cdot v \times W \hookrightarrow V \oplus W.
$$

This gives a surjection of invariant rings $\C[V \oplus W]^G \twoheadrightarrow \C[G/H \times W]^G$ by the above lemma. Combining with the above discussion, we have:
$$
\phi: \C[V \oplus W]^G \twoheadrightarrow \C[G/H \times W]^G \isom \C[W]^H
$$

Recall the $W$-grading on $\C[G/H \times W]$. We also have a $W$-grading on $\C[V \oplus W]$. The surjection $\C[V \oplus W] \twoheadrightarrow \C[G/H \times W]$ is degree non-increasing in the $W$-grading, and hence so is $\phi$.

The polynomial grading and $W$-grading are different on $\C[V \oplus W]$. If $f \in \C[V \oplus W]$ is homogeneous in degree $d$ in the polynomial grading, then $f$ need not be homogeneous in the $W$-grading. However, the homogeneous components of $f$ in the $W$-grading will all be in degrees $\leq d$. On the other hand, the $W$-grading and the polynomial grading on $\C[W]^H$ agree. In particular this means that the surjection $\phi:\C[V \oplus W]^G \twoheadrightarrow  \C[W]^H$ is degree non-increasing even when we consider the polynomial grading on $\C[V \oplus W]^G$. Thus we can apply Lemma~\ref{surj.bw.inv} to deduce:
$$
\beta_G(V \oplus W) \geq \beta_H(W) \text{ and } \sigma_G(V \oplus W) \geq \sigma_H(W).
$$
\end{proof}

\section{Root systems and Invariant forms} \label{rs and inv.forms}
Let $G$ be a complex reductive group, and $K$ a maximal compact subgroup, also called a compact real form. Let $T_{\R}$ be a (real) maximal torus of $K$. Being a real torus means that $T_\R \cong S_1^n$ for some $n \in \Z_{\geq 0}$, where $S_1 = \{z \in \C\ |\ |z| = 1\}$. Let $T$ denote the complexification of $T_\R$. Then $T$ is a (complex) maximal torus for $G$. We denote the Lie algebra of $T$ by $\tn$. 

For any representation $V$ of $G$, we can view it as a representation of $T$, and hence we get a weight space decomposition 

$$
V = \bigoplus_{\lambda \in \mathcal{X}(T)} V_{\lambda}.
$$

There is a natural way to view $\mathcal{X}(T)$ as a subset of $\tn^*$. Indeed, let $T = (\C^*)^n$, and consequently its Lie algebra $\tn = \C^n$ where the lie bracket is identically zero. Let $v_1,\dots,v_n$ be the standard basis for $\C^n$, and consider the dual basis $e_1,\dots,e_n \in (\C^n)^* = \tn^*$.

Then the correspondence $\lambda = (\lambda_1,\dots,\lambda_n) \longleftrightarrow \sum_{i=1}^n \lambda_i e_i$ allows us to view $\mathcal{X}(T)$ as a subset of $\tn^*$. This is indeed natural because for the character $\lambda: T = (\C^*)^n \rightarrow \C^*$ given by $(t_1,\dots,t_n) \mapsto \prod_i t_i^{\lambda_i}$, we get a map on the Lie algebras $\lambda: \tn = \C^n \rightarrow \C$ given by  $(x_1,\dots,x_n) \mapsto \sum_i \lambda_ix_i$.

An action of $T$ on $V$ gives an action of the Lie algebra $\tn$ on $V$. For $\lambda \in \tn^*$, we can define $V_\lambda = \{v \in V\ | \ X \cdot v = \lambda(X) v \ \forall X \in \tn\}$. Unless $\lambda \in \mathcal{X}(T)$, we must have $V_\lambda = 0$. Further, both definitions of $V_\lambda$ agree.

Hence, one might also write the above decomposition as 
$$
V = \bigoplus_{\lambda \in \tn^*} V_{\lambda}.
$$

\begin{remark}
As discussed above, there are two ways to view the weight space decomposition. Both are well-known and standard, and we will freely switch between the two as needed.
\end{remark}

We make a convenient definition.
\begin{definition}
For any $v \in V$, the decomposition $v = \sum_{\lambda} v_{\lambda}$ with $v_{\lambda} \in V_{\lambda}$ is called the weight decomposition of $v$. The weight decomposition is unique. Further the set $\{\lambda\ |\ v_{\lambda} \neq 0\}$ is called the support of $v$, and we denote it by $\Supp(v)$.
\end{definition}

The following two examples are meant for readers unfamiliar with root systems, and can be skipped by experts. In Remark~\ref{B-defns}, we develop notation that will be helpful at various stages of paper.

\begin{example} \label{GLnstart}
Suppose $G = \GL_n(\C)$. Consider $\U_n(C) = \{X \in \GL_n(\C)\ |\ XX^\dag = I \}$ the unitary group of matrices, where $X^\dag$ denotes the conjugate transpose of $X$. Then $K = \U_n(C)$ is a compact real form for $\GL_n(\C)$. Let ${\rm diag} (a_1,\dots,a_n)$ denote the diagonal $n \times n$ matrix with diagonal entries $a_1,\dots,a_n$. The real torus $T_\R = \{{\rm diag}(a_1,\dots,a_n)\ |\ a_i \in \C, |a_i| = 1 \}$ is a (real) maximal torus of $K$, and its complexification $T = \{ {\rm diag} (a_1,\dots,a_n)\ |\  a_i \in \C \}$ is a (complex) maximal torus, and the Lie algebra of $T$ is $\tn =  \{{\rm diag} (x_1,\dots,x_n)\ |\ x_i \in \C\}$. Let $\wt{e}_i \in \mathcal{X}(T)$ be defined by $\wt{e}_i \cdot {\rm diag}(t_1,\dots,t_n) = t_i$. Then, for the action of $\GL_n$ on $\C^n$ by left multiplication, the standard basis vector $e_i$ is a weight vector with weight $\wt{e}_i$. The weights $\wt{e}_i$ form a basis of $\tn^*$.
\end{example}

\begin{example} \label{SLnstart}
Suppose $G = \SL_n(\C)$. Then $K = \SU_n(\C) = \{X \in U_n(\C)\ |\ \det(X) = 1 \}$ is a compact real form, $T_\R = \{ {\rm diag}(a_1,\dots,a_n)\ |\ a_i \in \C, |a_i| = 1, \prod_i a_i = 1\}$ is a (real) maximal torus, its complexification $T =  \{ {\rm diag}(a_1,\dots,a_n)\ |\ a_i \in \C,  \prod_i a_i = 1\}$ is a (complex) maximal torus, and its Lie algebra is $\tn = \{{\rm diag} (x_1,\dots,x_n)\ |\ x_i \in \C, \sum_i x_i = 0\}$. The condition $\sum_i x_i = 0$ is really just asking for the trace of the matrix to be $0$. Let $\wt{e}_i \in \mathcal{X}(T)$ be defined again by $\wt{e}_i \cdot {\rm diag} (t_1,\dots,t_n) = t_i$. Then, for the action of $\SL_n$ on $\C^n$ by left multiplication, the standard basis vector $e_i$ is a weight vector with weight $\wt{e}_i$. The weights $\wt{e}_i$ do not form a basis. They satisfy one relation, i.e, $\sum_i \wt{e}_i = 0 \in \tn^*$.
\end{example}

Let us reformulate the above examples. 
\begin{remark} \label{B-defns}
Suppose $G = \GL(V)$ or $\SL(V)$, with $\B$ a basis for $V$. Then, using the basis, we can identify $\GL(V)$ (resp. $\SL(V)$) with $\GL_n$ (resp. $\SL_n$). With this identification, we can define $K_\B,T_{\R,\B}, T_\B, \tn_\B$ as in the above examples. Under these choices, $\B$ consists of weight vectors. Let us denote the weight of $b \in \B$ by $\wt{b}$. The set $\{ \wt{b}\ |\ b \in \B\}$ forms a basis for $\tn_\B^*$ if $G = \GL(V)$, and satisfy one relation. i.e., $\sum_{b \in \B} \wt{b} = 0$ if $G = \SL(V)$.
When the group is not clear, we will write $K_{G,\B}, T_{G,\B}$ etc.
\end{remark}

\subsection{Invariant forms} \label{Section.inv.forms}
For this section, let $G$ be a complex reductive group, $K$ a compact real form, $T_\R$ a (real) maximal torus in $K$ and $T$ the complexification of $T_\R$.

\begin{proposition} \label{com.inv.exist}
Let $V$ be any representation of $G$. Then there is a positive definite $K$-invariant hermitian form on $V$ for which the weight spaces are pairwise orthogonal.
\end{proposition}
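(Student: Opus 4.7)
The plan is to build the form by the standard compact-group averaging trick, and then extract orthogonality of weight spaces from the fact that $T_\R \subseteq K$.

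First I would pick any positive definite Hermitian form $\langle \cdot,\cdot\rangle_0$ on $V$. Since $K$ is compact, it carries a normalized Haar measure $dk$, so the averaged form
\begin{equation*}
\langle u,v\rangle := \int_K \langle k\cdot u,\, k\cdot v\rangle_0\, dk
\end{equation*}
is well defined. A routine check (linearity in $u$, conjugate-linearity in $v$, Hermitian symmetry) shows it is a Hermitian form, and positive definiteness is inherited from the integrand since $\langle k\cdot u, k\cdot u\rangle_0 > 0$ for $u\neq 0$ and the integrand is continuous. The left-invariance of Haar measure gives $K$-invariance.

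For the orthogonality statement, I would restrict attention to $T_\R \subseteq K$. Take weight vectors $v\in V_\lambda$, $w\in V_\mu$ with $\lambda\neq\mu$. For any $t\in T_\R$, $K$-invariance yields
\begin{equation*}
\langle v,w\rangle \;=\; \langle t\cdot v,\, t\cdot w\rangle \;=\; \lambda(t)\,\overline{\mu(t)}\,\langle v,w\rangle.
\end{equation*}
Since $T_\R$ is compact, each character takes values in the unit circle $S^1$, so $\overline{\mu(t)} = \mu(t)^{-1}$, and the equality becomes $\langle v,w\rangle = (\lambda\mu^{-1})(t)\,\langle v,w\rangle$. Because $T$ is the complexification of $T_\R$, the restriction map $\mathcal{X}(T) \to \mathrm{Hom}(T_\R,S^1)$ is injective, so $\lambda\mu^{-1}$ is a nontrivial character of $T_\R$. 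Choosing $t\in T_\R$ with $(\lambda\mu^{-1})(t)\neq 1$ forces $\langle v,w\rangle = 0$.

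I do not expect any real obstacles here: the two ingredients (compact-group averaging, and distinct characters giving orthogonal isotypic components) are both standard. The one point worth being careful about is the remark that weights of $T$, viewed as algebraic characters, remain distinct when restricted to $T_\R$, which is exactly what lets the orthogonality argument go through.
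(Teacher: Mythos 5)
Your proof is correct, and it gets to the orthogonality by a genuinely different route than the paper. After establishing the $K$-invariant Hermitian form (the paper cites Weyl's unitary trick; you make the averaging explicit, which amounts to the same thing), the paper passes through the geometry of $U(V)$: it observes that the image of $T_\R$ lands in a compact maximal torus of $U(V)$, uses conjugacy of maximal tori to produce an orthonormal basis of simultaneous eigenvectors, and concludes that weight spaces are orthogonal because they are spanned by disjoint subsets of an orthonormal basis. You instead restrict invariance to $T_\R$ and do the one-line character computation
\begin{equation*}
\langle v,w\rangle = \langle t\cdot v, t\cdot w\rangle = \lambda(t)\,\overline{\mu(t)}\,\langle v,w\rangle = (\lambda\mu^{-1})(t)\,\langle v,w\rangle ,
\end{equation*}
using that characters of the compact real torus are unit-modular and that distinct algebraic characters of $T$ restrict to distinct unitary characters of $T_\R$. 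This is shorter and, in fact, proves the slightly stronger statement that \emph{every} positive definite $K$-invariant Hermitian form has pairwise orthogonal weight spaces, whereas the paper only exhibits one. The one point you correctly flagged as needing care --- injectivity of the restriction $\mathcal{X}(T)\to\mathrm{Hom}(T_\R,S^1)$ --- is fine: with $T\cong(\C^*)^n$ and $T_\R\cong S_1^n$, the character $\lambda=(\lambda_1,\dots,\lambda_n)\in\Z^n$ restricts to $(e^{i\theta_1},\dots,e^{i\theta_n})\mapsto e^{i\sum\lambda_j\theta_j}$, which visibly determines $\lambda$.
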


\begin{proof}
Let $\rho:G \rightarrow \GL(V)$ define the representation. There is a positive definite hermitian form $\left<-,-\right>$ on $V$ such that $\rho(K) \subseteq U(V)$, where $U(V)$ denotes the unitary group with respect to $\left<-,-\right>$. This is the statement of Weyl's unitary trick. Now, $\rho(T_\R) \subseteq \rho(K) \subseteq U(V)$ is a subtorus of $U(V)$, and hence $\rho(T_\R)$ is contained in a (real) maximal torus $H_\R$ of $U(V)$.
All maximal tori of $U(V)$ are conjugate to each other. In other words, there is an orthonormal basis $\B$ of $V$ such that $K_{\GL(V),\B} = U(V)$ and $T_{\GL(V),\R,\B} = H_\R$.

The basis vectors $b \in \B$ are weight vectors for $H_\R$ and hence for $T_\R$. Since the basis vectors are orthonormal, the weight spaces must be orthogonal.
\end{proof}

\begin{definition} [$(K,T)$-compatible form]
For a representation $V$ of $G$, we call a positive definite $K$-invariant hermitian form $(K,T)$-compatible if the weight spaces are orthogonal.
\end{definition}

The above proposition can now be reformulated as:

\begin{corollary}
Let $V$ be a representation of $G$. Then there exists a $(K,T)$-compatible form on $V$.
\end{corollary}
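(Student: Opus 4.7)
The plan is essentially to observe that this corollary is a direct restatement of Proposition~\ref{com.inv.exist} in the terminology just introduced. Unpacking the definition of a $(K,T)$-compatible form, it is precisely a positive definite $K$-invariant hermitian form on $V$ whose weight spaces are pairwise orthogonal, which is exactly the conclusion produced by the proposition. So my proof will simply invoke Proposition~\ref{com.inv.exist} to obtain such a form and then cite the definition to recognise it as $(K,T)$-compatible.

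Concretely, the single step I would carry out is: apply Proposition~\ref{com.inv.exist} to the given representation $V$ of $G$ to produce a positive definite $K$-invariant hermitian form $\langle -,- \rangle$ on $V$ for which $V_\lambda \perp V_\mu$ whenever $\lambda \neq \mu$; then remark that this is precisely the content of the preceding definition, so $\langle -,- \rangle$ is a $(K,T)$-compatible form on $V$.

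There is no real obstacle. The only thing worth a sanity check is terminological: the definition asks that ``the weight spaces be orthogonal'' while the proposition says ``pairwise orthogonal''; I would just confirm (implicitly) that these mean the same thing, since orthogonality of a family of subspaces is by definition a pairwise condition. Thus the corollary is immediate and exists mainly to record the statement in the vocabulary that the rest of Section~\ref{rs and inv.forms} and later sections will use.
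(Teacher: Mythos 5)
Your proposal is correct and is exactly what the paper does: the corollary is stated as an immediate reformulation of Proposition~\ref{com.inv.exist} once the term $(K,T)$-compatible has been defined, and no further argument is given or needed.
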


\begin{definition}
Let $V$ be a vector space with basis $\B$. Let $W$ be a representation of $GL(V)$. Then a $\B$-compatible form on $W$ is defined as an $(K_{\GL(V),\B},T_{\GL(V),\B})$-invariant form. 
\end{definition}

\begin{remark}
If $W$ is a representation of $GL(V)$, then it is also a representation of $SL(V)$. A $\B$-compatible form on $W$ is also a $(K_{\SL(V),\B}, T_{\SL(V),\B})$-form. The converse is not always true.
\end{remark}

\begin{definition} [direct sum form]
Suppose $W_i$ is a vector space with a bilinear form $\left<-.-\right>_i$ for $i = 1,2$. Then we define the direct sum form $\left<-,-\right>$ on $W_1 \oplus W_2$ by 
$$
\left<(a,b),(c,d)\right> = \left<a,c\right>_1 \left<b,d \right>_2.
$$
\end{definition}

The following lemma is straightforward.

\begin{lemma} \label{sum.form}
Suppose $W_1$ and $W_2$ are representations of $G$ with positive definite $K$-invariant hermitian forms. Then the direct sum form gives a positive definite $K$-invariant hermitian form on $W_1 \oplus W_2$. Further, under this form, $W_1$ is orthogonal to $W_2$.
\end{lemma}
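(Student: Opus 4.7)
The plan is to verify the claim property by property, observing that each property defining a positive definite $K$-invariant hermitian form on $W_1\oplus W_2$ reduces cleanly to the corresponding property on $W_1$ and $W_2$ separately. I will interpret the direct sum form in the standard way that makes the lemma valid, namely as $\langle(a,b),(c,d)\rangle=\langle a,c\rangle_1+\langle b,d\rangle_2$, since the product formula would fail positive definiteness (e.g.\ on a vector of the form $(0,b)$ with $b\ne 0$).

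First I would check sesquilinearity and conjugate symmetry. Both properties are preserved under finite sums, so since each $\langle-,-\rangle_i$ enjoys them by hypothesis, so does the direct sum form. Next, for positive definiteness, evaluating on a nonzero vector $(a,b)$ gives $\langle a,a\rangle_1+\langle b,b\rangle_2$; both summands are non-negative, and at least one is strictly positive because $(a,b)\ne 0$, forcing the total to be positive.

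Third, for $K$-invariance, I fix $k\in K$ acting diagonally on $W_1\oplus W_2$ and compute
$$\langle k\cdot(a,b),k\cdot(c,d)\rangle=\langle k\cdot a,k\cdot c\rangle_1+\langle k\cdot b,k\cdot d\rangle_2=\langle a,c\rangle_1+\langle b,d\rangle_2=\langle(a,b),(c,d)\rangle,$$
using the $K$-invariance of each $\langle-,-\rangle_i$. Finally, the orthogonality of $W_1$ and $W_2$ under the direct sum form is immediate: $\langle(a,0),(0,d)\rangle=\langle a,0\rangle_1+\langle 0,d\rangle_2=0$.

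There is no serious obstacle here; the lemma is essentially a bookkeeping exercise, which is why the paper labels it straightforward. The only conceptual point worth flagging is that the diagonal action of $K$ on $W_1\oplus W_2$ is precisely what lets $K$-invariance of the two summand forms transfer to $K$-invariance of their direct sum.
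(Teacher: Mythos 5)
Your proof is correct, and it actually improves on the paper: the paper does not prove Lemma~\ref{sum.form} at all (it is dismissed with the prefatory remark that it is ``straightforward''), and, more importantly, the paper's displayed formula in the definition of the direct sum form contains a typo, writing the product $\left<a,c\right>_1\left<b,d\right>_2$ where the sum $\left<a,c\right>_1+\left<b,d\right>_2$ is clearly intended. You correctly flagged that the product formula cannot be right --- it is not even additive in the first argument, let alone positive definite on $(0,b)$ --- and proved the lemma for the intended additive formula. The verification itself (sesquilinearity and conjugate symmetry are preserved under sums; positive definiteness follows since at least one coordinate of a nonzero vector is nonzero; $K$-invariance passes through the diagonal action; orthogonality of $W_1$ and $W_2$ is immediate) is exactly the routine componentwise argument one should give, and is essentially the only available route. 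The one point worth retaining for the written record is precisely your parenthetical: the definition in the source should be corrected from a product to a sum.
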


\subsection{Root systems}
For a complex reductive group $G$, let $K$ be a compact real form, $T_\R$ a maximal torus in $K$ and $T$ its compexification. Let $\mathfrak{g}$ denote the Lie algebra of $G$. There is an exponential map $\exp: \g \rightarrow G$. There is a natural action of the group $G$ acts on $\g$ called adjoint action. The adjoint action is given by $g \cdot X = \frac{d}{dt} g \cdot \exp(tX) \cdot g^{-1}$. 
Since $T$ is a subgroup of $G$, we get an action of $T$ on $\g$. This gives a decomposition of $\g$ into weight spaces with respect to $T$, i.e.,  
$$
\g = \tn \oplus\bigoplus_{\alpha\in\Phi} \g_\alpha.
$$
Let us explain the terms. For each weight $\beta \in \tn^*$, the weight space $\g_\beta$ consists of all the weight vectors in $\g$ of weight $\beta$. More precisely, $\g_\beta = \{X \in \g\ |\ t \cdot X = \beta(t) X\  \forall t \in T\}$. Since $\g$ is finite dimensional, only a finitely many of these weight spaces are non-zero. The weight space corresponding to the $0$ weight is just $\tn$, i.e., $\g_0 = \tn$. The set of non-zero weights $\beta$ for which the weight space $g_\beta$ is non-zero form a finite collection of vectors in $\mathcal{X}(T) \subset \tn^*$ called the root system, which we denote by $\Phi$. This explains the above decomposition. 

The above decomposition of $\g$ is often called the root space decomposition. Root systems have a very rich structure, and have been explored extensively from algebraic, geometric and combinatorial points of view. We refer to \cite{Humphreys} for an algebraic introduction. 

\begin{example} \label{glnrs}
Let $G = \GL_n$. We continue with the choices for $K,T,\tn$ etc from Example~\ref{GLnstart}. The Lie algebra of $G = \GL_n$ is $\g = \Mat_{n,n}$. Let $E_{i,j}$ denote the elementary $n \times n$ matrix with a $1$ in the $(i,j)^{th}$ entry and $0$'s everywhere else. The set $\{E_{i,j}\}_{1 \leq i,j \leq n}$ is a weight basis for $\Mat_{n,n}$. The torus element $t = {\rm diag}(t_1,\dots,t_n)$ acts on $E_{i,j}$ by the formula
$$
t \cdot E_{i,j} = t_it_j^{-1} E_{i,j}
$$
Recall that $e_1,\dots,e_n$ is a basis for $\tn^*$, where the formula $e_i (t) = t_i$ defines the weight $e_i \in \tn^*$. Hence, the weight $e_i-e_j$ is given by the formula $(e_i - e_j)(t) = t_it_j^{-1}$. In particular, $E_{i,j}$ is a weight vector for the weight $e_i-e_j$. Hence, the obvious decomposition 
$$
\g = \Mat_{n,n} = \tn \oplus \bigoplus_{i,j} \C E_{i,j}
$$
is indeed the weight space decomposition. This means that the root system $\Phi$ consists of weights of the form $e_i - e_j$ for $i \neq j$, i.e., $\Phi = \{e_i - e_j \ |\ 1\leq i,j \leq n,  i \neq j\}$. 
\end{example}

\begin{example} \label{slnrs}
Let $G = \SL_n$. We continue with the choices for $K,T,\tn$ etc from Example~\ref{SLnstart}. The Lie algebra of $G = \SL_n$ is $\g = \{X \in \Mat_{n,n}\ |\ {\rm Tr}(X) = 0 \}$. Recall that in this case $e_1,\dots,e_n$ satisfy one linear relation, i.e., $\sum_i e_i = 0$. Again, the root system $\Phi = \{e_i - e_j \ |\ 1\leq i,j \leq n,  i \neq j\}$. 

\end{example}

\begin{remark} \label{b-rootsys}
If we started with $G = \GL(V)$ or $\SL(V)$, and a basis $\B$ of $V$ and made all the standard choices as in Remark~\ref{B-defns}, the root system would be 
$\Phi = \{\wt{b} - \wt{b'}\ |\ b,b' \in \B, b \neq b'\}$
\end{remark}

We make some useful definitions to aid in formulating later statements.

\begin{definition} [Root adjacent]
We say two weights $\lambda,\mu \in \tn^*$ are root adjacent if $\lambda - \mu \in \Phi$.
\end{definition}

\begin{definition} [\uncramped\  sets of weights] \label{uncramped}
A subset of weights $I \subseteq \tn^*$ is called \uncramped\  if no pair of weights in $I$ is root adjacent.
\end{definition}

\subsection{Products of root systems}
Suppose $G_1,\dots,G_d$ are connected reductive groups. Suppose for each $i$ that $K_i,T_i$ are choices of maximal compact subgroups and maximal tori. Let $\Phi_i$ be the root system for $G_i$ corresponding to these choices. Then $K:= K_1 \times K_2 \times \dots \times K_d$ (resp. $T:= T_1 \times \dots \times T_d$) is a maximal compact subgroup (resp. maximal torus) for $G:= G_1 \times G_2 \times \dots \times G_d$. Let $\tn = \tn_1 \times \tn_2 \dots \times \tn_d$ is the Lie algebra of $T$, where $\tn_i$ denotes the Lie algebra of $T_i$. Observe that $\Phi_i \subseteq \tn_i^* \subset \tn^*$. The following is straightforward.

\begin{lemma}
The set $\Phi_1 \cup \Phi_2 \cup \dots \cup \Phi_d$ is the root system for $G$. 
\end{lemma}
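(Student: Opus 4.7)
The plan is to analyze the adjoint action of $T = T_1 \times \cdots \times T_d$ on $\g = \g_1 \oplus \cdots \oplus \g_d$ (the Lie algebra decomposition of $G$) and read off the root space decomposition directly from those of the factors.

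First I would note that since $G_i$ and $G_j$ commute inside $G$ for $i \neq j$, the adjoint action of $G_j$ on $\g_i$ is trivial, and in particular $T_j$ acts trivially on $\g_i$ for $j \neq i$. Under the identification $\tn^* = \tn_1^* \oplus \cdots \oplus \tn_d^*$, this means that if $v \in \g_i$ is a $T_i$-weight vector of weight $\alpha \in \tn_i^*$, then $v$ is a $T$-weight vector of weight equal to the image of $\alpha$ under the natural inclusion $\tn_i^* \hookrightarrow \tn^*$.

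Next I would assemble the root space decompositions of the factors: for each $i$,
$$\g_i = \tn_i \oplus \bigoplus_{\alpha \in \Phi_i} (\g_i)_\alpha.$$
Summing these over $i$ and regrouping the Cartan pieces gives
$$\g = \bigoplus_{i=1}^d \g_i = \tn \oplus \bigoplus_{i=1}^d \bigoplus_{\alpha \in \Phi_i} (\g_i)_\alpha,$$
which, by the previous paragraph, is the $T$-weight decomposition of $\g$. The zero weight space is $\tn$, and the nonzero weights appearing are precisely the elements of $\Phi_1 \cup \cdots \cup \Phi_d$ (viewed inside $\tn^*$).

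The only small point to verify is that this union is in fact the full root system, with no unexpected collisions collapsing distinct roots. Since $\Phi_i \subseteq \tn_i^* \subseteq \tn^*$ and the subspaces $\tn_i^*$ of $\tn^*$ intersect pairwise only in $0$, a root in $\Phi_i$ can equal a root in $\Phi_j$ only if both are zero, which contradicts their being roots; so the union is disjoint and each weight space $(\g_i)_\alpha$ is exactly the $\alpha$-weight space of $\g$ under $T$. Hence $\Phi_1 \cup \cdots \cup \Phi_d$ is the root system of $G$ with respect to $T$. There is no real obstacle here; the content is just that commuting factors produce independent weights, so the whole argument is bookkeeping with the inclusions $\tn_i^* \hookrightarrow \tn^*$.
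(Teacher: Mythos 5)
Your argument is correct and is exactly the standard bookkeeping that the paper leaves unstated when it calls the lemma ``straightforward'': decompose $\g = \g_1 \oplus \cdots \oplus \g_d$, observe that $T_j$ acts trivially on $\g_i$ for $j \neq i$, and sum the individual root space decompositions. There is no divergence from the paper's (implicit) approach.
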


Let $\underline{\lambda} = (\lambda_1,\lambda_2,\dots,\lambda_d)$ and $\underline{\mu} = (\mu_1,\dots,\mu_d)$ be two weights in $\tn^*$. 

\begin{corollary} 
Suppose $\lambda_t \neq \mu_t$ for at least two choices of $t \in \{1,2,\dots,d\}$. Then $\underline{\lambda}$ and $\underline{\mu}$ are not root adjacent.  
\end{corollary}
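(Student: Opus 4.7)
The plan is to unwind the definitions: two weights are root adjacent precisely when their difference lies in $\Phi$, and by the preceding lemma $\Phi = \Phi_1 \cup \dots \cup \Phi_d$, with each $\Phi_i \subseteq \tn_i^*$. The key structural fact I will use is that the inclusion $\tn_i^* \hookrightarrow \tn^* = \tn_1^* \oplus \dots \oplus \tn_d^*$ sends an element $\alpha \in \tn_i^*$ to the tuple whose $i$-th component is $\alpha$ and whose other components vanish. Consequently, every root of $G$, viewed as an element of $\tn^*$, has at most one nonzero component.

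First I would write out the difference $\underline{\lambda} - \underline{\mu} = (\lambda_1 - \mu_1, \dots, \lambda_d - \mu_d) \in \tn^*$. By hypothesis, at least two of the entries $\lambda_t - \mu_t$ are nonzero. Next I would observe that if $\underline{\lambda} - \underline{\mu}$ were a root, it would have to belong to $\Phi_i$ for some single index $i$, and hence its components indexed by $j \neq i$ would all have to vanish; this contradicts the assumption that at least two of the $\lambda_t - \mu_t$ are nonzero. Therefore $\underline{\lambda} - \underline{\mu} \notin \Phi$, and by the definition of root adjacency, $\underline{\lambda}$ and $\underline{\mu}$ are not root adjacent.

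There is no real obstacle here; the statement is essentially a bookkeeping consequence of the product decomposition of the root system established in the preceding lemma, together with the observation that each factor's roots live entirely in one coordinate of $\tn^*$. The only minor point worth being explicit about is the identification of $\tn_i^*$ as the subspace of $\tn^*$ on which the other coordinates vanish, which justifies calling $\lambda_t - \mu_t$ the ``$t$-th component'' of the difference.
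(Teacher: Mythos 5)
Your proof is correct, and it is precisely the argument the authors had in mind: the paper states this corollary without proof, treating it as an immediate consequence of the preceding lemma that $\Phi = \Phi_1 \cup \dots \cup \Phi_d$ with each $\Phi_i \subseteq \tn_i^*$, which is exactly the observation you unpack. Your write-up fills in the implicit bookkeeping cleanly and there is nothing further to add.
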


Suppose $V_i$ is a representation of $G_i$ for each $i$. Then $V = V_1 \otimes V_2 \otimes \dots \otimes V_d$ is a representation of $G$. Suppose $v_i,w_i \in V_i$ are weight vectors of weights $\lambda_i, \mu_i$. Let $v = v_1 \otimes v_2 \otimes \dots \otimes v_d$ and $w = w_1 \otimes \dots \otimes w_d$. Clearly, the weight of $v$ is $(\lambda_1,\lambda_2,\dots,\lambda_d) := \underline{\lambda}$, and the weight of $w$ is $(\mu_1,\dots,\mu_d):= \underline{\mu}$. Specializing the discussion to tensor actions, we get:

\begin{corollary}  \label{uncramped.tensor}
Consider the tensor action of $\SL(V_1) \times \SL(V_2) \times \dots \times \SL(V_d)$ on $V_1 \otimes V_2 \otimes \dots \otimes V_d$. Suppose $\B_i$ is a basis for $V_i$ and we make all the standard choices for compact real form, tori etc with respect to  the basis $\B_i$. Let $v = b_1 \otimes b_2 \otimes \dots \otimes b_d$ and $w = b_1' \otimes b_2' \otimes \dots \otimes b_d'$ with $b_i,b_i' \in \B_i$ for all $i$. Suppose for at least two choices of $i \in \{1,2,\dots,d\}$, we have $b_i \neq b_i'$. Then $v$ and $w$ are weight vectors whose weights are not root adjacent. 
\end{corollary}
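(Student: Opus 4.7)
The plan is to identify the weights of the pure tensors $v$ and $w$ under the standard choices, and then invoke the preceding Corollary, which already provides the criterion for non-adjacency in a product root system. Accordingly the argument is essentially a specialization: we must only translate the hypothesis ``$b_i \neq b_i'$ for at least two $i$'' into ``$\lambda_i \neq \mu_i$ for at least two $i$.''

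First I would unwind the definitions: under the standard identifications from Remark~\ref{B-defns}, each $b_i \in \B_i$ is a weight vector for $T_{\SL(V_i),\B_i}$ of weight $\wt{b_i} \in \tn_{\SL(V_i),\B_i}^*$. By the tensor-weight formula stated just above the corollary, $v = b_1 \otimes \dots \otimes b_d$ is then a weight vector for the product torus $T = T_1 \times \dots \times T_d$ of weight $\underline{\lambda} = (\wt{b_1},\dots,\wt{b_d})$, and similarly $w$ has weight $\underline{\mu} = (\wt{b_1'},\dots,\wt{b_d'})$. In particular both $v$ and $w$ are weight vectors, as asserted.

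Next I would check one small but necessary point: within each factor the assignment $b \mapsto \wt{b}$ from $\B_i$ to $\tn_{i}^*$ is injective. For $\GL(V_i)$ this is immediate since the $\wt{b}$ form a basis; for $\SL(V_i)$ the only relation among them is $\sum_{b \in \B_i} \wt{b} = 0$, which does not identify any two distinct weights (most cleanly seen by restricting characters from the $\GL$-torus). Hence $b_i \neq b_i'$ forces $\wt{b_i} \neq \wt{b_i'}$, and the hypothesis guarantees that $\underline{\lambda}$ and $\underline{\mu}$ differ in at least two coordinates. Applying the preceding Corollary then yields that $\underline{\lambda}$ and $\underline{\mu}$ are not root adjacent, finishing the proof. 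I do not anticipate a real obstacle here; the statement is a direct consequence of the product-of-root-systems lemma combined with the tensor-product weight formula, with the only bit of care being the injectivity of $b \mapsto \wt{b}$ in each $\SL(V_i)$ factor.
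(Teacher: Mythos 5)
Your proof is correct and follows exactly the route the paper intends: identify the weight of a pure tensor as the tuple of component weights, then apply the preceding corollary on product root systems. The only addition is that you explicitly verify the injectivity of $b \mapsto \wt{b}$ within each $\SL(V_i)$ factor, a small point the paper silently takes for granted; your check is sound.
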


\section{Moment map and a criterion for closed orbits} 
In order to be able to use Theorem~\ref{main} effectively, we would need to prove that an orbit is closed. A criterion for detecting whether an orbit is closed is interesting by itself, and a good criterion could have a range of applications in both pure and applied mathematics. We approach the problem via the moment map, which suffices for our purposes. It is an interesting problem to understand whether the criterion we propose (see Theorem~\ref{crit.co}) has a suitable analogue in positive characteristic. We first define the moment map.

\begin{definition}
Let $V$ be a representation of a connected complex reductive group $G$. Let $K$ be a maximal compact group of $G$, and let $\g$ denote the Lie algebra of $G$. Let $\left< -, - \right>$ be a $K$-invariant positive definite hermitian form. The moment map $\mu_G : V \rightarrow \mathfrak{g}^*$ is defined by $\mu_G(v)(X) = \left<Xv, v \right>$ for $v \in V$ and $X \in \g$.
\end{definition}

\begin{proposition} [Kempf-Ness]
Suppose $\mu_G(v) = 0$, then the orbit $G \cdot v$ is closed.
\end{proposition}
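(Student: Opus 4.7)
The plan is to show that $\mu_G(v)=0$ forces $v$ to minimize the norm-squared function on its orbit $G\cdot v$, and then to deduce closedness of the orbit from this minimum property.

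Using the Cartan (polar) decomposition $G=K\cdot\exp(i\mathfrak{k})$, where $\mathfrak{k}$ denotes the Lie algebra of $K$, every $g\in G$ factors as $g=k\exp(iY)$ with $k\in K$ and $Y\in\mathfrak{k}$, so by $K$-invariance of the form $\|gv\|^2=\|\exp(iY)v\|^2$. For fixed $Y\in\mathfrak{k}$ set $\phi_Y(s)=\|\exp(siY)v\|^2$. Since $K$ acts unitarily, $iY$ is Hermitian; writing $v=\sum_j c_j u_j$ in an orthonormal eigenbasis of $iY$ with real eigenvalues $\alpha_j$ yields
$$
\phi_Y(s)=\sum_j|c_j|^2\, e^{2s\alpha_j},
$$
which is convex in $s$. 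Differentiating gives $\phi_Y'(0)=2\langle iY v,v\rangle=2\mu_G(v)(iY)=0$, so $s=0$ is the global minimum of $\phi_Y$; evaluating at $s=1$ gives $\|gv\|^2\ge\|v\|^2$ for every $g\in G$.

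Suppose now for contradiction that $G\cdot v$ is not closed. The orbit closure $\overline{G\cdot v}$ then contains a unique closed orbit $O$ of strictly smaller dimension (standard for reductive group actions), and the continuous function $\|\cdot\|^2$ attains its infimum over $\overline{G\cdot v}$ at some point $v_0\in O$ (the infimum is attained after restricting to $\overline{G\cdot v}\cap\overline{B(0,\|v\|)}$, which is compact). Since $v\in\overline{G\cdot v}$, $\|v_0\|\le\|v\|$; combined with the previous paragraph, $\|v_0\|=\|v\|$. Pick a sequence $g_n v\to v_0$ with polar decompositions $g_n=k_n\exp(iY_n)$. Then $\phi_{Y_n}(1)=\|g_n v\|^2\to\|v\|^2=\phi_{Y_n}(0)$. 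Using $\phi_{Y_n}'(0)=0$ and the identity
$$
e^{2x}-1-2x-(e^x-1)^2 = 2(e^x-x-1)\ge 0,
$$
we get termwise
$$
\|\exp(iY_n)v-v\|^2=\sum_j|c_j^n|^2(e^{\alpha_j^n}-1)^2\le\sum_j|c_j^n|^2(e^{2\alpha_j^n}-1-2\alpha_j^n)=\phi_{Y_n}(1)-\phi_{Y_n}(0)\to 0,
$$
so $\exp(iY_n)v\to v$. Extracting a subsequence with $k_n\to k\in K$ (by compactness of $K$) yields $g_n v\to kv\in G\cdot v$, contradicting $v_0\notin G\cdot v$.

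The main technical subtlety is promoting the scalar convergence $\|\exp(iY_n)v\|\to\|v\|$ to vector convergence $\exp(iY_n)v\to v$ without any a priori bound on the sequence $Y_n$. This is handled cleanly by the elementary inequality $(e^x-1)^2\le e^{2x}-1-2x$ above, which packages the convexity defect of $\phi_{Y_n}$ and controls each eigencomponent uniformly; the remaining ingredients (polar decomposition, convexity of positive exponential sums, and existence of a closed orbit in every reductive orbit closure) are structural.
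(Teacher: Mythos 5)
Your first paragraph is correct and is the standard convexity-along-geodesics argument: for $Y\in\mathfrak{k}$ the function $\phi_Y(s)=\|\exp(siY)v\|^2=\sum_j|c_j|^2e^{2s\alpha_j}$ is convex, $\phi_Y'(0)=2\mu_G(v)(iY)=0$, hence $\|gv\|\ge\|v\|$ for all $g\in G$; and the elementary inequality $(e^x-1)^2\le e^{2x}-1-2x$ is a nice way to convert scalar convergence $\phi_{Y_n}(1)\to\phi_{Y_n}(0)$ into the vector convergence $\exp(iY_n)v\to v$. (Note the paper itself does not prove this proposition; it cites Kempf--Ness and Mumford, so there is no in-text proof to compare against.)

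The gap is in the step ``the continuous function $\|\cdot\|^2$ attains its infimum over $\overline{G\cdot v}$ at some point $v_0\in O$.'' Your parenthetical only justifies that the infimum is attained somewhere; it gives no reason it should be attained on the boundary orbit $O$. In fact your first paragraph shows that $v$ itself is already a global norm minimizer on $\overline{G\cdot v}$, so the existence of minimizers is automatic and tells you nothing about $O$; and once Kempf--Ness is known the minimizers in this situation are exactly $K\cdot v\subset G\cdot v$, i.e.\ none lies on $O$. Any attempt to prove that a minimizer lies on $O$ runs straight into circularity: a minimizer $w_0$ over $\overline{G\cdot v}$ also minimizes over $\overline{G\cdot w_0}$, hence $\mu_G(w_0)=0$, hence (by the proposition you are proving!) $G\cdot w_0$ is closed and equals $O$. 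So your contradiction never gets off the ground: you are assuming the orbit is not closed, but you have no $v_0$ outside $G\cdot v$ with $\|v_0\|=\|v\|$ to feed into the limiting argument. A correct proof needs an additional structural input, e.g.\ the Birkes--Kempf one-parameter-subgroup criterion: if $G\cdot v$ is not closed there is a one-parameter subgroup $\lambda$ of $G$ with $\lim_{t\to 0}\lambda(t)v$ existing and lying outside $G\cdot v$; after a $K$-conjugation one may take $d\lambda(1)=iY\in i\mathfrak{k}$, and then $\phi(s)=\|\exp(siY)v\|^2$ is convex with $\phi'(0)=0$ and bounded as $s\to\infty$, forcing $\phi$ to be constant and $iYv=0$, contradicting that the $\lambda$-limit leaves the orbit. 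Without some such input, the argument does not close.
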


An even stronger statement holds, namely that every closed orbit contains a unique $K$-orbit at which the moment map vanishes. This is precisely why the GIT quotient $X\sslash G$ agrees with the symplectic reduction $\mu^{-1}(0) / K$, which is known as the Kempf-Ness theorem. We refrain from getting into this beautiful subject, and refer to \cite{KN, Mumford} for details.

Now, we turn towards proving a criterion for the vanishing of the moment map in the language of root systems.

\begin{proposition}
Suppose $V$ is a representation of a connected complex reductive group $G$. Let $K$ be a compact real form, and let $T$ be a maximal torus, and let $\left<-,-\right>$ be a $(K,T)$-compatible form. Assume that $v \in V$. Let $v = \sum_{\lambda \in \Supp(v)} v_{\lambda}$ be its weight decomposition. Suppose
\begin{enumerate}
\item $\Supp(v)$ is \uncramped\  (see Definition~\ref{uncramped}).
\item $\sum_{\lambda \in \Supp(v)} ||v_{\lambda}||^2 \lambda = 0$. 
\end{enumerate}
Then, $\mu_G(v) = 0$, and hence the orbit of $v$ is closed.
\end{proposition}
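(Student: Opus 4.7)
The plan is to prove that $\mu_G(v)(X) = \langle Xv, v\rangle$ vanishes for every $X \in \mathfrak{g}$, at which point the Kempf-Ness proposition will immediately yield closedness of $G \cdot v$. Since $G$ is connected reductive, the root-space decomposition $\mathfrak{g} = \mathfrak{t} \oplus \bigoplus_{\alpha \in \Phi}\mathfrak{g}_\alpha$ reduces the task to checking vanishing one summand at a time, and the two hypotheses are tailored to the two types of summand: (2) will handle $\mathfrak{t}$ and (1) will handle each $\mathfrak{g}_\alpha$.

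For $X \in \mathfrak{t}$, I would use that each $v_\lambda$ is a weight vector, so $Xv_\lambda = \lambda(X)\,v_\lambda$, and the $(K,T)$-compatibility of $\langle -,-\rangle$ makes distinct weight spaces orthogonal. Expanding $\langle Xv,v\rangle = \sum_{\lambda,\mu}\lambda(X)\langle v_\lambda, v_\mu\rangle$ then collapses to $\sum_{\lambda\in\Supp(v)}\lambda(X)\|v_\lambda\|^2 = \bigl(\sum_\lambda \|v_\lambda\|^2 \lambda\bigr)(X)$, which is zero by hypothesis (2).

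For $X \in \mathfrak{g}_\alpha$, the plan is to use the standard identity $\mathfrak{g}_\alpha \cdot V_\lambda \subseteq V_{\lambda+\alpha}$: this places $Xv_\lambda$ in the weight space $V_{\lambda+\alpha}$, so weight-space orthogonality kills every term $\langle Xv_\lambda, v_\mu\rangle$ unless $\mu = \lambda + \alpha$. A non-zero contribution would therefore require both $\lambda$ and $\lambda+\alpha$ to lie in $\Supp(v)$, i.e.\ a root-adjacent pair in $\Supp(v)$, which is exactly what hypothesis (1) forbids. Hence $\langle Xv,v\rangle = 0$ on each root summand, finishing the verification that $\mu_G(v) = 0$.

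I do not expect a serious obstacle; the argument is essentially bookkeeping once the root-space decomposition and the diagonality of the $(K,T)$-compatible form on the weight decomposition are in hand. The only point I would double-check is the convention extending a character $\lambda$ of $T$ to a linear functional on $\mathfrak{t}$, so that the identification of $\sum_\lambda \|v_\lambda\|^2 \lambda$ as an element of $\mathfrak{t}^*$ which vanishes on all $X \in \mathfrak{t}$ matches the way hypothesis (2) is stated. Modulo that compatibility, the proof slots together cleanly: (1) kills every off-diagonal contribution coming from a root, and (2) kills the diagonal contribution coming from the torus.
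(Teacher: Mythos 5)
Your proof is correct and follows essentially the same route as the paper: decompose $\mathfrak{g}$ into $\mathfrak{t}$ plus root spaces, use $(K,T)$-compatibility to kill the $\mathfrak{g}_\alpha$ contributions via uncrampedness, and use hypothesis (2) to kill the $\mathfrak{t}$ contribution.
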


\begin{proof}
Look at the root space decomposition $\g = \tn \oplus\bigoplus_{\alpha \in \Phi} \g_\alpha$. We want to show that $\mu_G(v)(X) = 0$ for all $X \in \g$. Since $\mu_G(v)$ is a linear map from $\g$ to $\C$, it will suffice to show $\mu_G(v)(X) = 0$ separately for $X \in \tn$ and $X \in \g_\alpha$ for each $\alpha \in \Phi$.

Suppose $X \in \g_\alpha$. Then for each $\lambda \in \Supp(v)$, we have $X \cdot v_{\lambda} \in V_{\lambda + \alpha}$. We know that $\lambda + \alpha \notin \Supp(v)$ because $\Supp(v)$ is \uncramped. Since the form is $(K,T)$-compatible, we know that weight spaces are orthogonal. So, $X \cdot v_{\lambda}$ is orthogonal to $v$. Hence $X \cdot v = \sum_{\lambda \in \Supp(v)} X \cdot v_{\lambda}$ is orthogonal to $v$, i.e., $\left< X \cdot v, v \right> = 0$. In other words, $\mu_G(v)(X) = 0$.

Now, suppose $X \in \tn$. Then for each $\lambda \in \Supp(v)$, we have $X \cdot v_{\lambda} = \lambda(X) v_\lambda$. Now, observe that 
$$
\left< X\cdot v_{\lambda}, v \right> = \left< \lambda(X) v_\lambda, v \right> = \left<\lambda(X) v_\lambda, v_{\lambda} \right> = \lambda(X) ||v_{\lambda}||^2.
$$

Thus, we have 
$$
\left< X \cdot v, v \right> = \sum_{\lambda \in \Supp(v)} \left< X\cdot v_{\lambda}, v \right> = \sum_{\lambda \in \Supp(v)} \lambda(X) ||v_{\lambda}||^2 = 0.
$$

The last equality of course follows from Condition $(2)$. Hence, we have $\mu_G(v)(X) = 0$ for $X \in \tn$. Thus, we conclude $\mu_G(v) = 0$, and consequently that the orbit $G \cdot v$ is closed.
\end{proof}

\begin{theorem} \label{crit.co}
Suppose $W$ is a representation of a connected complex reductive group $G$ and $w \in W$. Let $W = \bigoplus\limits_{j \in J} W_j$ be a decomposition into subrepresentations. Take a $(K,T)$-compatible form on each $W_j$, and let $\left<-,-\right>$ denote their direct sum form on $W$. Let $w = \sum_{j \in J} {w_j}$ with $w_j \in W_j$. Further, write $w_j = \sum_{\lambda \in \Supp(w_j)} w_{j,\lambda}$ be the weight decomposition for each $w_j$. Suppose 
\begin{enumerate}
\item $\Supp(w_j)$ is \uncramped\ for all $j$;
\item $\sum_{j} \sum_{\lambda \in \Supp(w_j)} ||w_{j,\lambda}||^2 \lambda = 0$. 
\end{enumerate}
Then, the orbit of $w$ is closed. 
\end{theorem}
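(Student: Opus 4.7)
The plan is to reduce Theorem~\ref{crit.co} to the Kempf--Ness criterion by showing that $\mu_G(w) = 0$. The preceding proposition already established the same conclusion under the stronger assumption that $\Supp(w)$ (as an element of a single representation $V$) is uncramped and satisfies condition (2). The present theorem relaxes the uncramped assumption: it is required only on each summand $W_j$ separately. The key observation that will make this work is that the direct sum form renders distinct summands $W_j$ and $W_{j'}$ orthogonal, so cross-summand interactions cannot obstruct vanishing of the moment map even though $\Supp(w) = \bigcup_j \Supp(w_j)$ need not be uncramped.

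Concretely, I would mimic the proof of the preceding proposition. Fix the root space decomposition $\g = \tn \oplus \bigoplus_{\alpha \in \Phi} \g_\alpha$. Since $\mu_G(w)$ is linear on $\g$, it suffices to verify $\mu_G(w)(X) = \langle X\cdot w, w\rangle = 0$ in two cases: $X \in \tn$ and $X \in \g_\alpha$ for some root $\alpha \in \Phi$.

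For $X \in \g_\alpha$, I would expand $X \cdot w = \sum_j \sum_{\lambda \in \Supp(w_j)} X \cdot w_{j,\lambda}$ and observe that each $X \cdot w_{j,\lambda}$ lies in $(W_j)_{\lambda+\alpha}$ because each $W_j$ is $G$-stable. The inner product of this vector with $w_{j',\mu}$ vanishes when $j' \neq j$ by the direct sum orthogonality, and vanishes when $j' = j$ because $\lambda + \alpha \notin \Supp(w_j)$ (by the uncramped hypothesis on $\Supp(w_j)$) combined with the $(K,T)$-compatibility of the form on $W_j$, which makes weight spaces of $W_j$ pairwise orthogonal. Hence $\langle X\cdot w, w\rangle = 0$.

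For $X \in \tn$, the torus acts diagonally on weight vectors, so $X \cdot w_{j,\lambda} = \lambda(X) w_{j,\lambda}$. Using both the direct sum orthogonality (to kill cross-summand terms) and the $(K,T)$-compatibility inside each $W_j$ (to kill cross-weight terms within a summand), I obtain
$$
\langle X \cdot w, w\rangle = \sum_{j} \sum_{\lambda \in \Supp(w_j)} \lambda(X)\, \|w_{j,\lambda}\|^2,
$$
which vanishes by condition (2) applied pointwise at $X$. Combining the two cases yields $\mu_G(w) = 0$, and Kempf--Ness then gives that $G \cdot w$ is closed. I do not anticipate a serious obstacle: the only thing to be careful about is keeping the double indexing $(j,\lambda)$ straight and exploiting the orthogonality between distinct summands that is provided by the direct sum form but is not automatic from $(K,T)$-compatibility alone.
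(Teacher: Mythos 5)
Your proposal is correct and follows essentially the same route as the paper: verify $\mu_G(w)=0$ by splitting $X$ into $\tn$ and the root spaces $\g_\alpha$, using within each summand $W_j$ the uncramped hypothesis together with $(K,T)$-compatibility, and using the direct sum form's orthogonality of the $W_j$ to kill cross-summand terms; Kempf--Ness then gives closedness. The paper's write-up is slightly more terse (pointing back to the preceding proposition for each $W_j$), but the argument is identical.
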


\begin{proof}
We want to show that $\mu_G(w)(X) = 0$ for all $X \in \g$. Again, it suffices to show it separately for $X \in \tn$ and $X \in \g_\alpha$ for each $\alpha \in \Phi$. The argument for $\tn$ is the same as the previous proposition. 

Now suppose $X \in \g_\alpha$. For all $j$, we have $\left<Xw_j,w_j\right> = 0$ by repeating the argument from the previous proposition, as $\Supp(w_j)$ is uncramped. Since the irreducibles $W_j$ are orthogonal by construction of the form, this shows that $\left<Xw,w\right> = 0$ as required.
\end{proof}

\section{Cubic forms}
Let us set up the situation for this section. Let $V$ be a vector space of dimension $3n$, and let a basis for $V$ be $\B = \{x_i,y_i,z_i\}_{1 \leq i \leq n}$ .  Consider $W = \Sym^3(V)^{\oplus 4}$, and let 
$$
w = \Big(\sum_i x_i^2 z_i, \sum_i y_i^2 z_i, \sum_i \alpha_i x_iy_iz_i\Big),
$$

where $\alpha_i$ are distinct complex numbers with $|\alpha_i| = 1$ and for all $i \neq j$, $\alpha_i \neq \pm \alpha_j$. There is a natural action of $\SL(V)$ on $\Sym^3(V)$, and hence on $W$. We will write $w = (w_1,w_2,w_3)$ where $w_1 =  \sum_i x_i^2 z_i$, $w_2 = \sum_i y_i^2 z_i$ and $w_3 = \sum_i \alpha_i x_iy_iz_i.$

\begin{proposition} \label{closed w}
The orbit $\SL(V) \cdot w$ is closed.
\end{proposition}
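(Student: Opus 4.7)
Plan: The natural approach is to apply the moment map criterion of Theorem~\ref{crit.co} to the decomposition of $W = \Sym^3(V)^{\oplus 4}$ into its four summands. I would first equip $V$ with the hermitian form making $\B$ orthonormal, endow each copy of $\Sym^3(V)$ with the induced $\B$-compatible form (Proposition~\ref{com.inv.exist}), and let $W$ carry the resulting direct sum form (Lemma~\ref{sum.form}). With respect to the torus $T = T_{\SL(V),\B}$, the degree-three monomials in $\B$ form a weight basis of $\Sym^3(V)$, so the components $w_1, w_2, w_3$ of $w$ are immediately displayed as sums of weight vectors: $x_i^2 z_i$ has weight $2\wt{x_i} + \wt{z_i}$, $y_i^2 z_i$ has weight $2\wt{y_i} + \wt{z_i}$, and $\alpha_i x_i y_i z_i$ has weight $\wt{x_i} + \wt{y_i} + \wt{z_i}$.

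To verify condition~(1) of Theorem~\ref{crit.co}, recall from Remark~\ref{b-rootsys} that the roots of $\SL(V)$ are exactly $\{\wt{b} - \wt{b'} : b, b' \in \B, b \neq b'\}$. For $w_1$, the difference between weights at indices $i$ and $k$ is $2(\wt{x_i} - \wt{x_k}) + (\wt{z_i} - \wt{z_k})$, and a short check (using that the only relation among the $\wt{b}$ in $\tn^*$ is $\sum_{b \in \B} \wt{b} = 0$) shows this is never a root. The arguments for $w_2$ and $w_3$ are analogous, so each $\Supp(w_j)$ is \uncramped.

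For condition~(2), the key is a symmetry observation: because the $\B$-compatible form on $\Sym^3(V)$ is invariant under permutations of $\B$, any two monomials of the same combinatorial shape have the same norm-squared. Let $c_{2,1} = \|b^2 b'\|^2$ for distinct $b, b' \in \B$ and $c_{1,1,1} = \|b b' b''\|^2$ for distinct $b, b', b''$. Since $|\alpha_i| = 1$, the norm-squared of $\alpha_i x_i y_i z_i$ is also $c_{1,1,1}$, independent of $i$. Summing the contributions gives
$$
\sum_j \sum_{\lambda} \|w_{j,\lambda}\|^2 \lambda = c_{2,1} \sum_i (2\wt{x_i} + \wt{z_i}) + c_{2,1} \sum_i (2\wt{y_i} + \wt{z_i}) + c_{1,1,1} \sum_i (\wt{x_i} + \wt{y_i} + \wt{z_i}),
$$
which by symmetry collapses to $(2c_{2,1} + c_{1,1,1}) \sum_{b \in \B} \wt{b}$. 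This equals $0$ in $\tn^*$ because $G = \SL(V)$ (Remark~\ref{B-defns}), so Theorem~\ref{crit.co} immediately yields that $\SL(V) \cdot w$ is closed.

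The main conceptual ingredient is the final symmetry collapse: the particular shape of $w$ (with matched coefficients across the three ``letters'' $x, y, z$ in $w_1, w_2$, and unit-modulus coefficients in $w_3$) is engineered so that the weighted sum of weights is proportional to $\sum_{b \in \B} \wt{b}$, which vanishes precisely because we are in $\SL$ rather than $\GL$. The verification of the uncramped condition is essentially bookkeeping and does not use the finer hypotheses on the $\alpha_i$, which will presumably be needed later to pin down the stabilizer rather than to establish closedness.
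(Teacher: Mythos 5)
Your proof is correct and follows essentially the same route as the paper: apply the moment map criterion (Theorem~\ref{crit.co}) with the direct sum of $\B$-compatible forms, check uncrampedness of each $\Supp(w_j)$ via Remark~\ref{b-rootsys}, and use the permutation-invariance of the $\B$-compatible form (your $c_{2,1}$, $c_{1,1,1}$ are the paper's $M^2$, $N^2$ for monomials of types $(2,1)$ and $(1,1,1)$) to collapse the weighted sum to a multiple of $\sum_{b\in\B}\wt{b}=0$. Your closing remark that the distinctness hypotheses on the $\alpha_i$ are not needed for closedness but only for pinning down the stabilizer is also accurate.
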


Let us define a map $ \phi: (\C^*)^n \rightarrow \SL(V)$. To define the map, it suffices to understand how $\phi(t = (t_1,\dots,t_n))$ acts on the basis $\{x_i,y_i,z_i\}_{1 \leq i \leq n}$. Define $\phi$ by $\phi(t) \cdot x_i = t_i x_i$, $\phi(t) \cdot y_i = t_i y_i$  and $\phi(t) \cdot z_i = t_i^{-2} z_i$. Let $H := \phi((\C^*)^n)$.

\begin{proposition} \label{stab.compute}
We have ${\rm Stab}_{\SL(V)}(w) = H$.
\end{proposition}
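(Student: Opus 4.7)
The plan is to argue in three stages: verify the easy inclusion, compute the Lie algebra stabilizer to identify ${\rm Stab}(w)^\circ$, and then finish with a normalizer-plus-weight-space argument using the specific choice of the $\alpha_i$.

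First, the containment $H \subseteq {\rm Stab}_{\SL(V)}(w)$ is a direct check: for any $t = (t_1,\dots,t_n)$, the element $\phi(t)$ scales each of $x_i^2z_i$, $y_i^2z_i$, and $x_iy_iz_i$ by $t_i^2 \cdot t_i^{-2} = 1$, and $\det \phi(t) = \prod_i t_i \cdot t_i \cdot t_i^{-2} = 1$.

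Next, I would show ${\rm Stab}(w)^\circ = H$ by computing the Lie algebra stabilizer. Decompose $V = V_x \oplus V_y \oplus V_z$ where $V_x = \C x_1 \oplus \cdots \oplus \C x_n$ (and analogously for $V_y, V_z$), and write $X \in \mathfrak{sl}(V)$ as a $3 \times 3$ block matrix. Expand $X \cdot w_1 = \sum_j (2 x_j (X x_j) z_j + x_j^2(X z_j))$ by the Leibniz rule on $\Sym^3 V$, and separate the resulting cubic by monomial type ($x_a x_b x_c$, $x_a x_b y_c$, $x_a y_b z_c$, $x_a z_b z_c$, $x_a^2 z_c$). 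Each of the first four types is produced by only one of the off-diagonal blocks of $X$, forcing that block to vanish; matching the $x_a^2 z_c$ coefficients forces the diagonal blocks $V_x \to V_x$ and $V_z \to V_z$ to be diagonal matrices $A$ and $I$ with $I_{ii} = -2A_{ii}$. The symmetric computation for $w_2$ kills the remaining off-diagonal blocks and forces the $V_y \to V_y$ block to equal $A$. Thus $\dim {\rm Lie}({\rm Stab}(w)) \leq n$, and combined with the connected $n$-dimensional subgroup $H \subseteq {\rm Stab}(w)$ this gives ${\rm Stab}(w)^\circ = H$. (The equation $X \cdot w_3 = 0$ becomes automatic on this Lie algebra and gives no extra constraint.)

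For the component group, let $g \in {\rm Stab}(w)$. Since ${\rm Stab}(w)^\circ = H$ is normal in ${\rm Stab}(w)$, $g$ normalizes $H$. The weights of $V$ under $H$ are $\chi_i$ (multiplicity $2$, weight space $\C x_i \oplus \C y_i$) and $-2\chi_i$ (multiplicity $1$, weight space $\C z_i$), so the automorphism of the character lattice $\Z^n$ induced by conjugation by $g$ must preserve the multi-set of weights with multiplicities; hence it permutes the $\chi_i$'s among themselves via some $\sigma \in S_n$ and correspondingly permutes the $-2\chi_i$'s by the same $\sigma$. Writing $g x_i = a_i x_{\sigma(i)} + c_i y_{\sigma(i)}$, $g y_i = b_i x_{\sigma(i)} + d_i y_{\sigma(i)}$, $g z_i = \lambda_i z_{\sigma(i)}$, the equations $g \cdot w_1 = w_1$ and $g \cdot w_2 = w_2$ give (by matching coefficients of $y_j^2 z_j$, $x_j^2 z_j$, and symmetrically) $b_i = c_i = 0$ and $\lambda_i a_i^2 = \lambda_i d_i^2 = 1$, so $a_i = \pm d_i$. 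Finally, $g \cdot w_3 = w_3$ yields $\alpha_i a_i d_i \lambda_i = \alpha_{\sigma(i)}$, i.e., $\alpha_{\sigma(i)} = \pm \alpha_i$; the hypotheses that the $\alpha_i$ are distinct, nonzero, and satisfy $\alpha_i \neq \pm \alpha_j$ for $i \neq j$ force $\sigma = \mathrm{id}$ and the $+$ sign, yielding $a_i = d_i =: t_i$, $\lambda_i = t_i^{-2}$, and $g = \phi(t_1,\dots,t_n) \in H$. The main obstacle is the Lie algebra computation, which is a careful but routine bookkeeping of cubic monomial types.
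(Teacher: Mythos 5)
Your proof is correct, and it takes a genuinely different route from the paper's. You deduce the identity component ${\rm Stab}(w)^\circ = H$ by a Lie algebra computation, then exploit the fact that any $g \in {\rm Stab}(w)$ normalizes $H$, so its induced automorphism of $\mathcal{X}(H) \cong \Z^n$ permutes the $H$-weights of $V$ with their multiplicities; since the $\chi_i$ (multiplicity $2$) and $-2\chi_i$ (multiplicity $1$) are distinguished by multiplicity, $g$ must carry $\C x_i \oplus \C y_i$ to $\C x_{\sigma(i)} \oplus \C y_{\sigma(i)}$ and $\C z_i$ to $\C z_{\sigma(i)}$ for some $\sigma \in S_n$. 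The paper reaches the same block structure by an entirely elementary device: any $g$ fixing $w_1 = \sum x_i^2 z_i$ must preserve the span of its partial derivatives $\langle x_1^2,\ldots,x_n^2,x_1z_1,\ldots,x_nz_n\rangle$, and the perfect squares inside that span are exactly the scalar multiples of the $x_i^2$, giving $g\cdot x_i = c_i x_{\sigma(i)}$ directly; matching coefficients in $g \cdot w_1 = w_1$ then forces $g\cdot z_i = c_i^{-2} z_{\sigma(i)}$, and symmetrically for $w_2$. Both arguments finish identically by invoking $w_3$ and the hypotheses on the $\alpha_i$. Your method is more systematic and transfers readily to other stabilizer computations (and makes the role of the torus $H$ transparent via its weight decomposition of $V$), but it needs the Lie algebra bookkeeping plus the normalizer-and-character-lattice argument; the paper's partial-derivatives trick is shorter, avoids Lie theory altogether, and extracts the permutation structure from $w_1$ alone in one stroke.
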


It is also easy to see that $H$ is a closed subgroup of $G$. It is also reductive because it is a torus. It is indeed necessary that the stabilizer is closed and reductive to be able to apply Theorem~\ref{main}, as we will do in the proof of Theorem~\ref{lbsln}.

We postpone the proofs Proposition~\ref{closed w} and Proposition~\ref{stab.compute} and complete the proof of Theorem~\ref{lbsln}.

Consider the $n+1$-dimensional subspace $U \subset \Sym^3(V)$ spanned by $\{x_1z_2^2, x_2z_3^2,\dots,x_nz_1^2,x_1^3\}$. This is an invariant subspace under the action of $H \subset \SL(V)$ described in the previous section.

\begin{lemma}
We have $\beta_H(U) \geq \sigma_H(U) \geq  \textstyle\frac{2}{3}(4^n - 1)$.
\end{lemma}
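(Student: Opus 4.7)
The plan is to recognize $U$ as precisely the kind of torus representation handled by Proposition~\ref{tor.inv.M}. Concretely, I will check that, with respect to the listed ordered basis $\mathcal{E} = (x_1 z_2^2, x_2 z_3^2, \ldots, x_{n-1} z_n^2, x_n z_1^2, x_1^3)$, the weight matrix $M_{\mathcal{E}}(U)$ of the $H$-representation $U$ coincides with the matrix $M$ from Section~\ref{prelim}, and then invoke that proposition.

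First I would use the isomorphism $\phi:(\C^*)^n \isom H$ defined just before Proposition~\ref{stab.compute} to identify the character group of $H$ with $\Z^n$. From $\phi(t)\cdot x_i = t_i x_i$ and $\phi(t)\cdot z_i = t_i^{-2} z_i$ one reads off that $x_i$ has weight $e_i$ and $z_i$ has weight $-2 e_i$, so a short computation gives the weights of the basis monomials: $e_i - 4 e_{i+1}$ for $x_i z_{i+1}^2$ with $1 \le i \le n-1$, then $-4 e_1 + e_n$ for $x_n z_1^2$, and finally $3 e_1$ for $x_1^3$. These $n+1$ weights are pairwise distinct, so the listed monomials are linearly independent and $\mathcal{E}$ really is a weight basis of the $(n+1)$-dimensional $H$-subrepresentation $U$.

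Placing these weights in order as the columns of an $n \times (n+1)$ integer matrix reproduces exactly the matrix $M$ displayed in Section~\ref{prelim}: the first $n-1$ columns give the bidiagonal pattern (a $1$ on the diagonal and a $-4$ just below), the $n$th column wraps around with a $-4$ in row $1$ and a $1$ in row $n$, and the final column is $3 e_1$. Thus $M_\mathcal{E}(U) = M$, and Proposition~\ref{tor.inv.M} applied to $H$ acting on $U$ immediately yields
\[
\beta_H(U) \geq \sigma_H(U) \geq \frac{2}{3}(4^n - 1),
\]
as required.

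The only potential for error is index bookkeeping in the cyclic "wrap-around" column corresponding to $x_n z_1^2$; otherwise the argument is a mechanical verification that drops straight into the prepared torus-invariant machinery of Section~\ref{prelim} and Proposition~\ref{tor.inv.M}.
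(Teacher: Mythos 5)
Your proof is correct and takes the same approach as the paper's proof: identify $\mathcal{E}$ as a weight basis for the $H$-action on $U$, verify that the weight matrix $M_{\mathcal{E}}(U)$ equals $M$, and invoke Proposition~\ref{tor.inv.M}. The paper's proof does exactly this in one line; you have simply supplied the explicit weight computations (weights $e_i - 4e_{i+1}$, $e_n - 4e_1$, $3e_1$) that the paper leaves to the reader, and these check out against the columns of $M$.
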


\begin{proof}
The basis $\mathcal{E} = (x_1z_2^2, x_2z_3^2,\dots,x_nz_1^2,x_1^3)$ is a weight basis, and $M_{\mathcal{E}(W)} = M$, the matrix in Section~\ref{prelim}. The lemma now follows from Proposition~\ref{tor.inv.M}.
\end{proof}

\begin{corollary}
We have $\beta_H(\Sym^3(V)) \geq \sigma_H(\Sym^3(V)) \geq \textstyle\frac{2}{3}(4^n - 1)$.
\end{corollary}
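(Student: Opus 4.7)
The plan is simply to combine the preceding lemma with Proposition~\ref{tor.inv.compare}. The lemma establishes that for the $(n+1)$-dimensional $H$-subrepresentation $U \subseteq \Sym^3(V)$ spanned by $\{x_1z_2^2, x_2z_3^2, \dots, x_nz_1^2, x_1^3\}$, one has $\beta_H(U) \geq \sigma_H(U) \geq \frac{2}{3}(4^n-1)$. Since $U$ is a subrepresentation of $\Sym^3(V)$ (it is spanned by weight vectors, so the $H$-action preserves it), Proposition~\ref{tor.inv.compare} applies and gives $\beta_H(\Sym^3(V)) \geq \beta_H(U)$ and $\sigma_H(\Sym^3(V)) \geq \sigma_H(U)$. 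Chaining these inequalities yields the corollary.

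The one thing worth spelling out is that $U$ really is $H$-stable. This is immediate from the description $H = \phi((\C^*)^n)$: each of the spanning vectors $x_iz_{i+1}^2$ (indices mod $n$) and $x_1^3$ is an eigenvector for every $\phi(t)$, so $U$ decomposes as a direct sum of one-dimensional weight spaces under $H$. Hence $U$ is not merely a subspace but a genuine $H$-subrepresentation, which is the hypothesis needed to invoke Proposition~\ref{tor.inv.compare}.

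There is no substantive obstacle here; the argument is a one-line application of the comparison proposition. The real work has already been done in (i) the lemma, which matched the weight matrix of $U$ with the matrix $M$ from Section~\ref{prelim} and invoked Proposition~\ref{tor.inv.M}, and (ii) the construction of the basis $\mathcal{E}$ whose weights realize $M$. The corollary is essentially a monotonicity statement, recording that lower bounds for torus-invariant degrees on a subrepresentation persist to the ambient representation.
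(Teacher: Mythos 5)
Your proof is correct and matches the paper's argument exactly: both invoke the preceding lemma for the subrepresentation $U$ and then apply Proposition~\ref{tor.inv.compare}. The additional remark verifying that $U$ is $H$-stable is a reasonable detail to include, though the paper leaves it implicit.
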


\begin{proof}
This follows from Proposition~\ref{tor.inv.compare} since $U$ is a subrepresentation of $\Sym^3(V)$ for the action of $H$.
\end{proof}

\begin{proof} [Proof of Theorem~\ref{lbsln}]
Let $G = \SL(V)$. Recall $w \in \Sym^3(V)^{\oplus 3}$ from the previous section such that ${\rm Stab}_G(w) = H$. Thus, by Theorem~\ref{main} and the above corollary, we have 
$$
\beta_G(\Sym^3(V)^{\oplus 3} \oplus \Sym^3(V)) 
\geq \sigma_G(\Sym^3(V)^{\oplus 3} \oplus \Sym^3(V)) \geq \sigma_H(\Sym^3(V)) \geq  \textstyle\frac{2}{3}(4^n - 1).
$$  
\end{proof}

\begin{remark}
If instead of $w$, one takes $(\sum_i x_i^2 z_i, \sum_i y_i^2 z_i) \in \Sym^3(V)^{\oplus 2}$, then this also has a closed orbit. However, its stabilizer is not the torus $H$ (defined above), but rather a finite extension of it.  With some additional work, this can be used to show exponential lower bounds for $\Sym^3(V)^{\oplus 3}$ (instead of $\Sym^3(V)^{\oplus 4}$ as stated in Theorem~\ref{lbsln}). However, we feel that this modest improvement does not warrant the additional discussion on how to deal with finite extensions of tori, so we omit it.
\end{remark}

\subsection{Closedness of orbit}
The strategy is to apply Theorem~\ref{crit.co}. But before proceeding to check the hypothesis, we need a little groundwork.

\begin{definition} [type of a monomial]
Every monomial in the basis $\mathcal{B}$ can be written as $b_1^{a_1}b_2^{a_2} \dots b_k^{a_k}$, where the $b_i$ represent distinct elements in the basis $\mathcal{B}$, and $a_1 \geq a_2 \geq \dots  \geq a_k > 0$. We define its type to be $(a_1,\dots,a_k)$.
\end{definition}

\begin{example}
The types of $x_i^2z_i$ and $y_j^2z_j$ are $(2,1)$, whereas the type of $x_iy_iz_i$ is $(1,1,1)$.
\end{example}

\begin{lemma}
There exists a $\B$-compatible form on $\Sym^3(V)$. For any $\B$-compatible form $\Sym^3(V)$, all the monomials of a fixed type have the same norm.
\end{lemma}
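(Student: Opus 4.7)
The lemma has two parts, and neither is especially deep, so the plan is mainly to assemble pieces that are already in the paper.

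For existence, the corollary following Proposition~\ref{com.inv.exist} gives a $(K,T)$-compatible form on any representation of a reductive group. Apply this to $G = \GL(V)$ with the compact form $K_{\GL(V),\B}$ and torus $T_{\GL(V),\B}$ determined by $\B$ as in Remark~\ref{B-defns}, acting on $\Sym^3(V)$. By definition, a $(K_{\GL(V),\B},T_{\GL(V),\B})$-compatible form on $\Sym^3(V)$ is exactly a $\B$-compatible form, so existence is immediate.

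For the second assertion, the key observation is that $K_{\GL(V),\B}$, identified via $\B$ with the unitary group $U_{3n}(\C)$, contains every permutation matrix of the basis $\B$. Given two monomials $m_1 = b_1^{a_1}\cdots b_k^{a_k}$ and $m_2 = c_1^{a_1}\cdots c_k^{a_k}$ of the same type $(a_1,\dots,a_k)$, the $b_i$ are distinct and the $c_j$ are distinct, so the assignment $b_i \mapsto c_i$ extends to a permutation $\sigma$ of $\B$. The induced action of $\sigma$ on $\Sym^3(V)$ sends $m_1$ to $m_2$. Since the form $\langle -,-\rangle$ is $K$-invariant and $\sigma \in K_{\GL(V),\B}$, we obtain $\|m_1\| = \|\sigma\cdot m_1\| = \|m_2\|$.

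The only point that requires a little care is the distinction between a monomial in $\Sym^3(V)$ and its expression as a product of basis vectors in the symmetric algebra: multiplicities in the exponent produce symmetrization factors that are intrinsic to the monomial (for example $\|x_1^3\|$ need not equal $\|x_1 x_2 x_3\|$), but these factors depend only on the type $(a_1,\dots,a_k)$ and not on which basis vectors $b_i$ are chosen, which is exactly why the permutation argument yields equal norms within a fixed type. I do not anticipate any serious obstacle; the entire proof is a two-line invocation of the corollary plus the permutation symmetry of $K_{\GL(V),\B}$.
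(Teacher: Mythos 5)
Your proof is correct and follows essentially the same route as the paper: invoke Proposition~\ref{com.inv.exist} with $K_{\GL(V),\B}$ and $T_{\GL(V),\B}$ for existence, then use $K$-invariance under basis permutations to identify norms of monomials of the same type. The extra paragraph about symmetrization factors is a sensible sanity check but not needed for the argument.
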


\begin{proof}
Recall that a $\B$-compatible form is a $(K_{\GL(V),\B},T_{\GL(V),\B})$-compatible form. Since we have an action of $\GL(V)$ on $\Sym^3(V)$, there is a $\B$-compatible form on $\Sym^3(V)$ by Proposition~\ref{com.inv.exist}. A permutation of the basis vectors in $\B$ is an element of $K_{\GL(V),\B}$, and hence such permutations preserve norm. To conclude, notice that the monomials of a fixed type are related by such permutations.
\end{proof}

We can now prove Proposition~\ref{closed w}.

\begin{proof} [Proof of Proposition~\ref{closed w}]
We want to show that $w$ satisfies the hypothesis of Theorem~\ref{crit.co}. Recall that $w = (w_1,w_2,w_3)$ where $w_1 =  \sum_i x_i^2 z_i$, $w_2 = \sum_i y_i^2 z_i$ and $w_3 = \sum_i \alpha_i x_iy_iz_i.$ Note that these are the weight space decompositions $w_j = \sum_{\lambda \in \Supp (w_j)} w_{j,\lambda}$. 

We want to check that the hypothesis of Theorem~\ref{crit.co} is satisfied. 
To check condition $(1)$ of Theorem~\ref{crit.co}, we need to check that each $\Supp (w_j)$ is \uncramped. But observe from the weight decompositions that $\Supp (w_1) = \{2\wt{x}_i + \wt{z}_i\}_{1 \leq i \leq n}$, $\Supp (w_2) = \{2\wt{y}_i + \wt{z}_i\}_{1 \leq i \leq n}$ and $\Supp (w_3) = \{\wt{x}_i + \wt{y}_i + \wt{z}_i\}_{1 \leq i \leq n}$. It is clear that these are \uncramped\  from the description of the root system $\Phi$ in Remark~\ref{b-rootsys}.

To check condition~$(2)$, we need to do an explicit computation that requires us to be able to compute the norms of $x_i^2z_i$, $y_i^2z_i$ and $x_iy_iz_i$. From the above lemma, we know that there is a $\B$-compatible form on each copy of $\Sym^3(V)$. We insist that we use the same form on each copy and then use the direct sum form on $\Sym^3(V)^{\oplus 3}$. 

The above lemma tells us that all monomials of a certain type have the same norm. Let us suppose that the monomials of type $(2,1)$ (e.g., $x_i^2z_i$ and $y_j^2z_j$) have norm $M$ and monomials of type $(1,1,1)$ (e.g., $x_iy_iz_i)$ have norm $N$. 

We compute $\sum_{\lambda \in \Supp (w_1)} ||w_{1,\lambda}||^2 \lambda$.
\begin{align*}
\sum_{\lambda \in \Supp (w_1)} ||w_{1,\lambda}||^2 \lambda & = \sum_{i=1}^n ||x_i^2z_i||^2 (2\wt{x}_i + \wt{z}_i) \\
& = \sum_i M^2 (2\wt{x}_i + \wt{z}_i)
\end{align*}

Similarly, we have 
$$
\sum_{\lambda \in \Supp (w_2)} ||w_{2,\lambda}||^2 \lambda  = \sum_{i} M^2 (2\wt{y}_i + \wt{z}_i), 
$$

and 

\begin{align*}
\sum_{\lambda \in \Supp (w_3)} ||w_{3,\lambda}||^2 \lambda &= \sum_i ||x_iy_iz_i||^2 (\wt{x}_i + \wt{y}_i + \wt{z}_i)  \\
& = N^2 (\sum_i \wt{x}_i + \wt{y}_i + \wt{z}_i).
\end{align*}

Hence, we have 
$$
\sum_{j =1}^3\sum_{\lambda \in \Supp (w_j)} ||w_{j,\lambda}||^2 \lambda = (2M^2 + N^2) (\sum_i \wt{x}_i + \wt{y}_i + \wt{z}_i) =(2M^2 + N^2) \sum_{b \in \B} \wt{b} = 0.
$$
The last equality follows from Remark~\ref{B-defns}, as we are working with $\SL(V)$. Hence, $w$ satisfies the hypothesis of Theorem~\ref{crit.co}, so the orbit of $w$ is closed.
\end{proof}

\subsection{Computation of stabilizer}
Now, we turn towards computing the stabilizer. We will proceed in steps. 

\begin{lemma}
Suppose $g \in SL(V)$ such that $g \cdot w_1 = w_1$. Then $g \cdot x_i = c_i x_{\sigma(i)}$ for some permutation $\sigma$ of $\{1,2,\dots,n\}$ and non-zero scalars $c_i$.
\end{lemma}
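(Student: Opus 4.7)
The plan is to identify the lines $\C \cdot x_i \subseteq V$ from the cubic $w_1 = \sum_i x_i^2 z_i$ by singling out those quadratic forms that arise as first-order partial derivatives of $w_1$ and are simultaneously squares of vectors in $V$. Let
$$I := \{\partial_\alpha w_1 : \alpha \in V^*\} \subseteq \Sym^2(V)$$
be the space of polar quadrics of $w_1$. A direct computation gives $\partial_{x_i^*} w_1 = 2 x_i z_i$, $\partial_{z_i^*} w_1 = x_i^2$, and $\partial_{y_i^*} w_1 = 0$, so $I = \mathrm{span}_{\C}\{x_i^2,\, x_iz_i : 1 \leq i \leq n\}$. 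The formation of $I$ from $w_1$ is $\SL(V)$-equivariant, so the hypothesis $g \cdot w_1 = w_1$ forces $I$ to be $g$-stable. Separately, the subvariety $\mathrm{Sq} := \{L^2 : L \in V\} \subseteq \Sym^2(V)$ of squares of vectors is automatically $g$-stable, since $g \cdot L^2 = (gL)^2$.

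The key calculation is that $I \cap \mathrm{Sq} = \bigcup_{k=1}^n \C \cdot x_k^2$. To verify this, take $L = \sum_j(\alpha_j x_j + \beta_j y_j + \gamma_j z_j) \in V$ with $L^2 \in I$, expand $L^2$, and match it against a general element $\sum_i(a_i x_i^2 + b_i x_i z_i)$ of $I$: the monomials $y_j^2$, $y_j y_{j'}$, $x_j y_{j'}$, and $y_j z_{j'}$ are absent from $I$, forcing every $\beta_j = 0$; the monomials $z_j^2$ and $z_j z_{j'}$ are absent, forcing every $\gamma_j = 0$; and the cross-terms $x_j x_{j'}$ with $j \neq j'$ are absent, forcing at most one $\alpha_k$ to be nonzero. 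Hence $L = \alpha_k x_k$ and $L^2 \in \C \cdot x_k^2$.

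Combining the two $g$-stable objects, the intersection $I \cap \mathrm{Sq}$ is $g$-stable, so $(g x_i)^2 = g \cdot x_i^2 \in \bigcup_k \C \cdot x_k^2$. Thus $(gx_i)^2 = c_i^2 x_{\sigma(i)}^2$ for some index $\sigma(i) \in \{1, \dots, n\}$ and some $c_i \in \C$. Since the symmetric algebra $\Sym(V)$ is an integral domain, taking square roots gives $g x_i = \pm c_i x_{\sigma(i)}$, a nonzero scalar multiple of $x_{\sigma(i)}$ (nonzero because $g$ is invertible). Finally, $\sigma$ must be a permutation of $\{1,\dots,n\}$: invertibility of $g$ makes $\{g x_1, \dots, g x_n\}$ linearly independent, so no two of these can be proportional to the same $x_k$, which forces $\sigma$ to be injective.

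The main obstacle in this plan is the coefficient-matching step that pins down $I \cap \mathrm{Sq}$; the rest of the argument is a formal consequence of the equivariance of the polar-quadric construction and of the squaring map $V \to \Sym^2(V)$.
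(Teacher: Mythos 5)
Your proposal is correct and takes essentially the same approach as the paper: the paper also argues via the $g$-stable span of partial derivatives of $w_1$ and identifies the perfect squares in that span as the scalar multiples of the $x_i^2$. You have simply filled in the coefficient-matching verification of $I \cap \mathrm{Sq} = \bigcup_k \C \cdot x_k^2$ and the square-root/injectivity endgame, both of which the paper states without detail.
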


\begin{proof}
The space of partial derivatives of $w_1$ is $\left< x_1^2,\dots,x_n^2, x_1z_1, \dots ,x_nz_n\right>$. This must be preserved by $g$. The squares in the space of partial derivatives are of the form $d_ix_i^2$ for some nonzero scalars $d_i$. Thus the image of $x_i$ under the action of $g$ must be a scalar multiple of $x_j$ for some $j$. Since $g$ is invertible, the lemma follows.
\end{proof}

\begin{corollary} \label{x2z}
Suppose $g \in {\rm Stab}_{\SL(V)} (w_1)$. Then for some permutation $\sigma$, we must have $g \cdot x_i = c_ix_{\sigma(i)}$ and $g \cdot z_i = c_i^{-2}z_{\sigma(i)}$ for some scalars $c_i$.
\end{corollary}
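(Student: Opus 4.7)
The plan is to use the previous lemma as the starting point: it already tells us that $g\cdot x_i = c_i x_{\sigma(i)}$ for some permutation $\sigma$ of $\{1,\dots,n\}$ and nonzero scalars $c_i$. It remains to determine $g\cdot z_i$. A priori, the image can be any vector in $V$, so I would write
$$g\cdot z_i = \sum_j \bigl( a_{ij} x_j + b_{ij} y_j + d_{ij} z_j \bigr)$$
with unknown scalars $a_{ij}, b_{ij}, d_{ij}$, and then extract them from the identity $g\cdot w_1 = w_1$.

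Applying $g$ to $w_1 = \sum_i x_i^2 z_i$ gives
$$\sum_i c_i^2\, x_{\sigma(i)}^2 \bigl( g\cdot z_i \bigr) = \sum_k x_k^2 z_k.$$
Substituting the expansion of $g\cdot z_i$ and using the fact that the degree-three monomials $x_k^2 x_j$, $x_k^2 y_j$, $x_k^2 z_j$ (for various $k,j$) are linearly independent in $\Sym^3(V)$, I can compare coefficients directly. Because $\sigma$ is a permutation, for each $k$ there is a unique $i$ with $\sigma(i)=k$, so the coefficient of $x_{\sigma(i)}^2 x_j$ is exactly $c_i^2 a_{ij}$, the coefficient of $x_{\sigma(i)}^2 y_j$ is $c_i^2 b_{ij}$, and the coefficient of $x_{\sigma(i)}^2 z_j$ is $c_i^2 d_{ij}$.

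Since $w_1$ only contains monomials of the form $x_k^2 z_k$, matching coefficients forces $c_i^2 a_{ij}=0$ and $c_i^2 b_{ij}=0$ for all $i,j$, and $c_i^2 d_{ij} = \delta_{\sigma(i),j}$. Using $c_i \neq 0$, this yields $a_{ij}=b_{ij}=0$ and $d_{ij}=0$ for $j\neq \sigma(i)$, with $d_{i,\sigma(i)} = c_i^{-2}$. Therefore $g\cdot z_i = c_i^{-2} z_{\sigma(i)}$, which is the desired conclusion. There is no real obstacle here; the argument is a straightforward coefficient comparison once the previous lemma pins down the action of $g$ on the $x_i$, and the key observation making it clean is simply that the monomials $x_{\sigma(i)}^2 \cdot v$ as $v$ ranges over the basis $\B$ are linearly independent in $\Sym^3(V)$.
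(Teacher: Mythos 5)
Your proof is correct and follows essentially the same route as the paper: apply the preceding lemma, substitute into $g\cdot w_1 = w_1$, and then use linear independence of cubic monomials in $\Sym^3(V)$ to force each $g\cdot z_i$ to be the expected scalar multiple of $z_{\sigma(i)}$. The only difference is cosmetic: you expand $g\cdot z_i$ in the full basis and match coefficients monomial by monomial, while the paper bundles the same information into the observation that, for $i\neq j$, the cubics $x_i^2 p$ and $x_j^2 q$ (with $p,q\in V$) share no monomials, which lets it equate $x_{\sigma(i)}^2\bigl(c_i^2\, g\cdot z_i - z_{\sigma(i)}\bigr)=0$ termwise without naming coordinates.
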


\begin{proof}
From the above lemma, we already know that $g \cdot x_i = c_i x_{\sigma(i)}$ for some permutation $\sigma$ and scalars $c_i$. Hence, we have 
$$
\sum_i (c_ix_{\sigma(i)})^2 (g \cdot z_i)  = g \cdot w_1 = w_1 =  \sum_i x_i^2 z_i = \sum_i x_{\sigma(i)}^2 z_{\sigma(i)}.
$$
Thus, we have 
$$
\sum_i x_{\sigma(i)}^2 (c_i^2 g \cdot z_i - z_{\sigma(i)}) = 0.
$$

Observe that monomials of degree $3$ in $\{x_i,y_i,z_i\}_{1 \leq i \leq n}$ are a basis for $\Sym^3(V)$. Now, for any $p,q \in V$, $x_i^2 p$ and $x_j^2 q$ do not have any monomials in common. Hence, we must have $x_{\sigma(i)}^2 (c_i^2 g \cdot z_i - z_{\sigma(i)}) = 0$ for all $i$. Hence, for all $i$, we must have $c_i^2 g \cdot z_i - z_{\sigma(i)} = 0$ or equivalently $g \cdot z_i = c_i^{-2} z_{\sigma(i)}$ as required. 
\end{proof}

We can do a similar analysis for $w_2$, and we get:

\begin{lemma} \label{y2z}
Suppose $g \in {\rm Stab}_{\SL(V)} (w_2)$. Then for some permutation $\pi$ and scalars $d_i$, we have $g \cdot y_i = d_iy_{\pi(i)}$ and $g \cdot z_i = d_i^{-2}z_{\pi(i)}$.
\end{lemma}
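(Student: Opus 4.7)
The plan is to mirror the proof of Corollary~\ref{x2z} verbatim, with the pair $(x_i,z_i)$ replaced by $(y_i,z_i)$ throughout; the symmetry of the argument in $x$ versus $y$ should make this a routine adaptation rather than a new argument.

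First I would establish the $y$-analogue of the preceding unnumbered lemma: if $g \cdot w_2 = w_2$, then $g \cdot y_i = d_i y_{\pi(i)}$ for some permutation $\pi$ of $\{1,\dots,n\}$ and nonzero scalars $d_i$. This follows by examining the space of partial derivatives of $w_2 = \sum_i y_i^2 z_i$, which is $\langle y_1^2,\dots,y_n^2,y_1z_1,\dots,y_nz_n\rangle$; since $g$ preserves this subspace and the only squares (up to scalar) that lie in it are the $y_j^2$, the image $g \cdot y_i$ must be a nonzero scalar multiple of some $y_j$, and invertibility forces the index map to be a permutation.

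With that in hand, I would substitute into the equation $g \cdot w_2 = w_2$ to determine $g \cdot z_i$. Writing
$$
\sum_i (d_i y_{\pi(i)})^2 (g \cdot z_i) \;=\; g \cdot w_2 \;=\; w_2 \;=\; \sum_i y_{\pi(i)}^2 z_{\pi(i)}
$$
and collecting terms gives
$$
\sum_i y_{\pi(i)}^2 \bigl(d_i^2\, g \cdot z_i - z_{\pi(i)}\bigr) = 0.
$$
Because the degree-$3$ monomials in the basis $\B = \{x_i,y_i,z_i\}$ form a basis of $\Sym^3(V)$, and for distinct $i,j$ the expressions $y_{\pi(i)}^2 p$ and $y_{\pi(j)}^2 q$ share no monomials regardless of $p,q \in V$, each summand must vanish: $d_i^2\, g \cdot z_i = z_{\pi(i)}$, giving $g \cdot z_i = d_i^{-2} z_{\pi(i)}$ as claimed.

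There is no real obstacle here: the only step that required genuine thought in Corollary~\ref{x2z} was identifying the partial-derivative space and extracting the squares from it, and that is built on structural features of $\sum_i x_i^2 z_i$ that $\sum_i y_i^2 z_i$ shares identically. The main thing to verify when writing it out is that the monomial-disjointness argument works for the $y_{\pi(i)}^2$ terms, which it does for the same reason it did for $x_{\sigma(i)}^2$.
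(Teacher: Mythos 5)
Your proposal is correct and follows exactly the route the paper intends: the paper introduces Lemma~\ref{y2z} with the remark ``We can do a similar analysis for $w_2$,'' and your $x\leftrightarrow y$ substitution into the partial-derivative argument and the monomial-disjointness step is precisely that analysis.
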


\begin{corollary}
Suppose $g \in {\rm Stab}_{\SL(V)} (w_1,w_2)$. Then for some permutation $\sigma$ and scalars $c_i$, we have $g(x_i) =  c_i x_{\sigma(i)}$, $g(y_i) = \pm  c_i y_{\sigma(i)}$ and $g(z_i) = c_i^{-2} z_{\sigma(i)}$.
\end{corollary}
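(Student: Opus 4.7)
The plan is to combine Corollary~\ref{x2z} and Lemma~\ref{y2z} and use the uniqueness of the expansion of $g(z_i)$ in the basis $\B$. Concretely, if $g$ stabilizes both $w_1$ and $w_2$, then Corollary~\ref{x2z} supplies a permutation $\sigma$ and scalars $c_i$ with
\[
g(x_i) = c_i\,x_{\sigma(i)}, \qquad g(z_i) = c_i^{-2}\,z_{\sigma(i)},
\]
while Lemma~\ref{y2z} supplies another permutation $\pi$ and scalars $d_i$ with
\[
g(y_i) = d_i\,y_{\pi(i)}, \qquad g(z_i) = d_i^{-2}\,z_{\pi(i)}.
\]

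The key observation is that $g(z_i)$ has a unique expression in the basis $\B$, so the two formulas for $g(z_i)$ must coincide term-by-term. Since $z_{\sigma(i)}$ and $z_{\pi(i)}$ are basis vectors and the scalar coefficients $c_i^{-2}, d_i^{-2}$ are nonzero, we immediately conclude $\pi(i)=\sigma(i)$ for every $i$, hence $\pi=\sigma$, and moreover $c_i^{-2}=d_i^{-2}$, i.e.\ $d_i^2 = c_i^2$, which gives $d_i = \pm c_i$.

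Substituting back into the formula for $g(y_i)$ then yields $g(y_i) = \pm c_i\,y_{\sigma(i)}$, matching the $g(x_i)$ formula up to the sign, and the $g(z_i)$ formula is already in the desired form from Corollary~\ref{x2z}. There is no genuine obstacle here: the whole argument is just the compatibility of the two independently derived expressions for $g(z_i)$ together with the fact that $\B$ is a basis, so the entire proof is a one-step matching of coefficients once Corollary~\ref{x2z} and Lemma~\ref{y2z} are in hand.
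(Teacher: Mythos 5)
Your proof is correct and follows exactly the same route as the paper: apply Corollary~\ref{x2z} and Lemma~\ref{y2z} to get two expressions for $g(z_i)$, then match them to force $\pi=\sigma$ and $d_i=\pm c_i$. Nothing is missing.
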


\begin{proof}
Suppose $g \in {\rm Stab}_{\SL(V)} (w_1,w_2)$. Then from Corollary~\ref{x2z}, we know that there is a permutation $\sigma$ and scalars $c_i$ such that $g(x_i) = c_i x_{\sigma(i)}$ and $g(z_i) = c_i^{-2}z_{\sigma(i)}$. By Lemma~\ref{y2z}, there is a permutation $\pi$ and scalars $d_i$ such that $g(y_i) = d_i y_{\pi(i)}$ and $g(z_i) = d_i^{-2}z_{\pi(i)}$. 

Thus, we have $g\cdot z_i = c_i^{-2} z_{\sigma(i)} = d_i^{-2} z_{\pi(i)}$ for all $i$. Hence, we must have $\sigma = \pi$ and $d_i = \pm c_i$.
\end{proof}

\begin{proof} [Proof of Proposition~\ref{stab.compute}]
Suppose $g \in Stab(w_1,w_2,w_3)$. Then since $g \in Stab(w_1,w_2)$, we know that there is a permutation $\sigma$ and scalars $c_i$ such that $g(x_i) =  c_i x_{\sigma(i)}$, $g(y_i) = \pm  c_i y_{\sigma(i)}$ and $g(z_i) = c_i^{-2} z_{\sigma(i)}$

In particular, this means that $g \cdot x_iy_iz_i = \pm x_{\sigma(i)}y_{\sigma(i)}z_{\sigma(i)}$. But now $g$ also fixes $w_3 = \sum_i \alpha_i x_iy_iz_i$. However, we have
$$
\sum_i \pm \alpha_i  x_{\sigma(i)}y_{\sigma(i)}z_{\sigma(i)} = g \cdot w_3 = w_3 = \sum_i \alpha_i x_iy_iz_i
$$

This means that $\pm\alpha_i =  \alpha_{\sigma(i)}$. But recall that the choice of $\alpha_i$'s was such that $\alpha_i \neq \pm \alpha_j$ for all $i \neq j$. This means that $\sigma$ is the identity permutation, and further that we must have $g \cdot x_iy_iz_i = x_iy_iz_i$. Hence, this implies $g \cdot y_i = c_iy_i$.

Thus we must have $g \cdot x_i = c_ix_i$, $g \cdot y_i = c_iy_i$ and $g\cdot z_i = c_i^{-2}z_i$. In other words, $g \in H$. Conversely, it is easy to observe that $H \subseteq Stab(w)$.
\end{proof}

\section{Tensor actions}
Let $U,V,W$ be $3n$-dimensional vector spaces with basis $\B_u = \{u_1^k,u_2^k,u_3^k\}_{1 \leq k \leq n}$, $\B_v = \{v_1^k,v_2^k,v_3^k\}_{1 \leq k \leq n}$ and $\B_w = \{w_1^k,w_2^k,w_3^k\}_{1 \leq k \leq n}$ respectively. 

Let 
\begin{align*}
F_1 &= \sum_{k=1}^n u_1^k v_2^k w_3^k + u_2^k v_3^k w_1^k + u_3^k  v_1^k w_2^k \\
G_1 &= \sum_{k=1}^n \alpha_k u_1^k v_2^k w_3^k + \beta_k u_2^k v_3^k w_1^k + \gamma_k u_3^k  v_1^k w_2^k \\
F_2 &= \sum_{k=1}^n u_2^k v_1^k w_3^k + u_1^k v_3^k w_2^k + u_3^k  v_2^k w_1^k \\
G_2 &= \sum_{k=1}^n \alpha_k u_2^k v_1^k w_3^k + \beta_k u_1^k v_3^k w_2^k + \gamma_k u_3^k  v_2^k w_1^k \\
F_3 &= \sum_{k=1}^n u_1^k v_1^k w_3^k + u_2^k v_3^k w_2^k + u_3^k  v_1^k w_1^k \\
G_3 &= \sum_{k=1}^n \alpha_k u_1^k v_1^k w_3^k + \beta_k u_2^k v_3^k w_2^k + \gamma_k u_3^k  v_1^k w_1^k  \\
F_4 &= \sum_{k=1}^n u_2^k v_2^k w_3^k + u_1^k v_3^k w_1^k + u_3^k  v_2^k w_2^k \\
G_4 &= \sum_{k=1}^n \alpha_k u_2^k v_2^k w_3^k + \beta_k u_1^k v_3^k w_1^k + \gamma_k u_3^k  v_2^k w_2^k, 
\end{align*}

where $\alpha_k,\beta_k,\gamma_k$ are a collection of distinct scalars in $\C$ with unit norm. Consider
$$
\underline{F} = (F_1,G_1,F_2,G_2,F_3,G_3,F_4,G_4) \in (U \otimes V \otimes W)^8.
$$

The approach will be the same as cubic forms. First, we show:
\begin{proposition} \label{orbit.closed.tensor}
The orbit of $\underline{F}$ for the action of $\SL(U) \times \SL(V) \times \SL(W)$ is closed.
\end{proposition}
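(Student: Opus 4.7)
The plan is to apply Theorem~\ref{crit.co} with $G = \SL(U)\times\SL(V)\times\SL(W)$ acting on the given representation, using the decomposition $(U\otimes V\otimes W)^{\oplus 8}=\bigoplus_{j=1}^8 (U\otimes V\otimes W)_j$ into its eight summands, with the $j$-th component of $\underline{F}$ being the corresponding $F_i$ or $G_i$. First I would pick a $\B_u$-compatible hermitian form on $U$, a $\B_v$-compatible form on $V$, and a $\B_w$-compatible form on $W$ (Proposition~\ref{com.inv.exist}), normalized so that the three given bases are orthonormal. Their tensor product is then a $(K,T)$-compatible form on $U\otimes V\otimes W$ (for the product compact form $K=K_{\SL(U),\B_u}\times K_{\SL(V),\B_v}\times K_{\SL(W),\B_w}$ and the corresponding product torus $T$), with respect to which the pure tensor basis $\{b_u\otimes b_v\otimes b_w : b_u\in\B_u,\ b_v\in\B_v,\ b_w\in\B_w\}$ is orthonormal. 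On the eight copies I take the direct sum form (Lemma~\ref{sum.form}).

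For condition (1) of Theorem~\ref{crit.co}, observe that each $F_i$ and its partner $G_i$ have the same support, consisting of pure tensor monomials of the form $u_a^k\otimes v_b^k\otimes w_c^k$ all sharing a common superscript $k$ and following one of four prescribed subscript patterns. Two distinct such monomials in the same $F_i$ either carry different superscripts (in which case all three tensor factors are distinct basis vectors) or share a superscript, and in the latter case a brief inspection of each of the four patterns shows that any two of the three same-$k$ terms differ in at least two of the three subscripts. Indeed, for $F_1$ and $F_2$ the subscript triples are permutations of $(1,2,3)$ so all three differ, while for $F_3$ and $F_4$ the three terms pairwise differ in either two or three coordinates. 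By Corollary~\ref{uncramped.tensor}, the corresponding weights in $\tn^*$ are therefore never root-adjacent, so each support is uncramped.

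For condition (2), every pure tensor term in every $F_i$ or $G_i$ has norm squared equal to $1$, since the basis tensors are orthonormal and the scalars $1,\alpha_k,\beta_k,\gamma_k$ all have unit modulus. So the weighted sum reduces to the ordinary sum of weights over all terms in the eight components. A direct count shows that for each superscript $k$, each basis vector of $\B_u$ appears exactly eight times (once in each of $F_1,G_1,F_2,G_2,F_3,G_3,F_4,G_4$, since each $F_i$'s $u$-subscripts form a permutation of $\{1,2,3\}$), and the same holds for $\B_w$; for $\B_v$, the $v$-subscripts in $F_1,F_2$ are permutations while $F_3,F_4$ together double-count $v_1$ and $v_2$ exactly enough to balance, again giving eight occurrences of each $v$-basis vector per $k$. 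Hence the total is $8\bigl(\sum_{b\in\B_u}\wt{b}+\sum_{b\in\B_v}\wt{b}+\sum_{b\in\B_w}\wt{b}\bigr)=0$ by the $\SL$ relation from Remark~\ref{B-defns}, and Theorem~\ref{crit.co} gives the closedness of the orbit. The only mildly subtle point is the uncramped verification for $F_3$ and $F_4$, whose subscript patterns in the $v$-factor are not permutations; this is precisely why one must check all three pairwise differences rather than appeal to a permutation argument, but once noted it is immediate.
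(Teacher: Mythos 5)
Your proof is correct and follows essentially the same route as the paper's: apply Theorem~\ref{crit.co} with the direct-sum of tensor-product $(K,T)$-compatible forms on the eight summands, verify uncrampedness componentwise via Corollary~\ref{uncramped.tensor}, and exploit the unit-modulus coefficients and the symmetric appearance of basis vectors (each exactly eight times) to reduce condition~(2) to the $\SL$ relation $\sum_{b}\wt b = 0$. Your explicit observation that the $F_3,F_4$ subscript patterns are not permutations and hence some pairs differ in only two of three slots is a useful clarification of what the paper dismisses as ``similar,'' but it is still the same argument since Corollary~\ref{uncramped.tensor} only requires differing in at least two factors.
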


Next, we compute the stabilizer. Let us define a map $\phi_U:  ((\C^*)^3)^n \rightarrow \GL(U)$. To define such a map it suffices to understand the action of $t = (p_1,q_1,r_1,p_2,q_2,r_2,\dots,p_n,q_n,r_n)$ on each basis vector $b \in \B_u$. The map $\phi_U$ is defined by 
$$
\phi_U(t) u_1^k = p_k u_1^k, \phi_U(t) u_2^k = p_k u_2^k \text{ and } \phi_U(t) u_3^k = (q_kr_k)^{-1} u_3^k.
$$ 
Similarly define $\phi_V: ((\C^*)^3)^n \rightarrow \GL(V)$ by 
$$
\phi_V(t) v_1^k = q_k v_1^k, \phi_V(t) v_2^k = q_k v_2^k \text{ and } \phi_V(t) v_3^k = (p_kr_k)^{-1} v_3^k.
$$
 Finally, define $\phi_W: ((\C^*)^3)^n \rightarrow \GL(W)$ by 
 $$
 \phi_W(t) w_1^k = r_k w_1^k, \phi_W(t) w_2^k = r_k w_2^k \text{ and } \phi_W(t) w_3^k = (p_kq_k)^{-1} w_3^k.
 $$
 
 Let $\phi = (\phi_U,\phi_V,\phi_W): ((\C^*)^3)^n \rightarrow \GL(U) \times \GL(V) \times \GL(W)$. Let $H$ denote the image of $\phi$. Then, we have:
  
\begin{proposition} \label{tensor-stab-compute}
We have ${\rm Stab}_{\GL(U) \times \GL(V) \times \GL(W)} (\underline{F}) = H$.
\end{proposition}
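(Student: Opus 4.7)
I begin with the easier inclusion $H \subseteq \operatorname{Stab}_{\GL(U) \times \GL(V) \times \GL(W)}(\underline F)$, which reduces to verifying that $\phi(t)$ fixes each monomial in the support of every $F_i$ and $G_i$. For instance, $\phi(t) \cdot u_1^k v_2^k w_3^k = (p_k u_1^k)(q_k v_2^k)((p_k q_k)^{-1} w_3^k) = u_1^k v_2^k w_3^k$, and each of the twelve monomials per level $k$ reduces to an analogous cancellation by construction of $\phi$.

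For the reverse direction, let $g = (g_1, g_2, g_3) \in \operatorname{Stab}(\underline F)$. The key technical step will be to show each $g_i$ permutes its basis up to nonzero scalars. Consider the flattening $\Phi \colon U^* \to V \otimes W$ of $F_1$, defined by $\Phi(u^*) = (u^* \otimes \operatorname{id})(F_1)$. On the dual basis of $U^*$, we have $\Phi((u_1^k)^*) = v_2^k \otimes w_3^k$, $\Phi((u_2^k)^*) = v_3^k \otimes w_1^k$, $\Phi((u_3^k)^*) = v_1^k \otimes w_2^k$. The crucial structural observation is that the $V$-components of these $3n$ rank-one images constitute exactly $\B_v$ and the $W$-components exactly $\B_w$. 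Therefore, for any $u^* \in U^*$, the matrix representing $\Phi(u^*)$ in the bases $\B_v, \B_w$ has its nonzero entries in pairwise distinct rows and pairwise distinct columns, so $\Phi(u^*)$ has rank $\leq 1$ iff $u^*$ is a scalar multiple of a single $(u_i^k)^*$. Since $g \cdot F_1 = F_1$ makes $\Phi$ equivariant for the $g_1$-action on $U^*$ and the $g_2 \otimes g_3$-action on $V \otimes W$, and the latter preserves the rank-one locus, $g_1$ must permute the coordinate lines of $U^*$ and hence $\B_u$ up to scalars. Symmetric arguments using the $V$- and $W$-flattenings of $F_1$ handle $g_2$ and $g_3$.

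Write $g_1 u_i^k = \lambda_{i,k} u_{\sigma(i,k)}$, $g_2 v_j^k = \mu_{j,k} v_{\tau(j,k)}$, $g_3 w_l^k = \nu_{l,k} w_{\rho(l,k)}$. Stabilization of $F_1$ induces a permutation $\pi$ of $\Supp(F_1)$ acting by $m \mapsto \kappa_m \pi(m)$, and comparing coefficients of $F_1$ forces each $\kappa_m = 1$; then $g \cdot G_1 = G_1$ together with the distinctness of the scalars $\{\alpha_{k'}, \beta_{k'}, \gamma_{k'}\}$ forces $\pi$ to preserve every coefficient, hence to be the identity. Reading off the three fixed monomials $u_1^k v_2^k w_3^k, u_2^k v_3^k w_1^k, u_3^k v_1^k w_2^k$ of $F_1$ at level $k$ pins $\sigma, \tau, \rho$ to the identity on $\{1,2,3\} \times \{1, \ldots, n\}$. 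It remains to solve the multiplicative system on the nine scalars per level $k$. Comparing the $F_1$- and $F_2$-equations (whose supports are related by transpositions) yields that $\mu_{i,k}/\lambda_{i,k}$ is independent of $i$, and comparing with the $F_3$-equations (which contribute monomials with repeated indices like $(1,1,3)$) yields $\lambda_{1,k} = \lambda_{2,k}$, $\mu_{1,k} = \mu_{2,k}$, $\nu_{1,k} = \nu_{2,k}$. Setting $p_k, q_k, r_k$ to be these common values and feeding back into the $F_1$-equations forces $\lambda_{3,k} = (q_k r_k)^{-1}$, $\mu_{3,k} = (p_k r_k)^{-1}$, $\nu_{3,k} = (p_k q_k)^{-1}$, matching $\phi$ exactly, so $g \in H$. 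The hardest step will be the basis-permutation analysis, which succeeds only because the supports of $F_1$'s flattenings realize a \emph{permutation pattern} (one nonzero entry per row and per column); once that is in place, the remainder is direct multilinear bookkeeping.
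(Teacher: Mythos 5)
Your proof is correct and reaches the same conclusion as the paper, but it routes the key ``basis-permutation'' step differently. The paper invokes Kruskal's uniqueness theorem for the rank-one decomposition of $F_1$ to conclude that any stabilizing $g$ must permute the rank-one summands of $F_1$, then uses the distinct coefficients in $G_1$ to force this permutation to be the identity, and finally reads off the diagonal scalar constraints from the remaining components. You instead argue via the $U$-flattening $\Phi\colon U^* \to V\otimes W$ of $F_1$: because the $V$-factors and $W$-factors of the $3n$ rank-one terms of $F_1$ exhaust $\B_v$ and $\B_w$, the images $\Phi((u_i^k)^*)$ sit in a permutation pattern in $\Mat_{\B_v,\B_w}$, so the rank-$\leq 1$ locus pulls back exactly to the union of coordinate lines of $U^*$; equivariance under $g$ then forces $g_1$ to permute $\B_u$ up to scalars, and symmetrically for $g_2,g_3$. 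This is precisely the ``elementary linear algebra'' alternative to Kruskal that the paper alludes to but does not spell out, and it has a concrete advantage over the paper's phrasing: the paper asserts that ``similar arguments'' apply to $F_2,F_3,F_4$, but the hypothesis of its Lemma~\ref{kruskal} actually fails for $F_3$ and $F_4$ because their $V$-factor collections contain repeated vectors (e.g.\ $v_1^k$ appears twice in $F_3$), so Kruskal cannot be applied to those tensors directly; the fix is exactly what your argument builds in from the start, namely that once $g$ is known to be diagonal from $F_1$ and $G_1$ alone, the equations for $F_2,F_3$ become pure monomial bookkeeping with no further uniqueness-of-decomposition needed. Your scalar computation (using only $F_1,F_2,F_3$ plus $G_1$ to pin $\pi=\mathrm{id}$, then solving the nine-unknown multiplicative system per level) is correct; the redundancy in $F_4,G_2,G_3,G_4$ is real --- they are needed for the closed-orbit moment-map condition, not for the stabilizer computation. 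One small stylistic nit: the phrase ``supports related by transpositions'' is not quite right for $F_3$ since its index triples such as $(1,1,3)$ are not permutations of $(1,2,3)$, but your parenthetical flagging the repeated indices shows you were aware of this, and the algebra you carry out is what matters and is sound.
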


Again, it is easy to check that $H$ is a closed subgroup of $\GL(U) \times \GL(V) \times \GL(W)$. It is also reductive because it is a torus. The reader perhaps has noticed that we have computed the stabilizer in $\GL(U) \times \GL(V) \times \GL(W)$ rather than the stabilizer in $\SL(U) \times \SL(V) \times \SL(W)$. There are several ways to fix this, and we indicate one of them.

Consider the group 
$$
J := \{ (g_1,g_2,g_3) \in \GL(U) \times \GL(V) \times \GL(W) \ |\ \det(g_1) \det(g_2) \det(g_3) = 1\}.
$$
Indeed, the first thing to observe is that $H \subset J$. Now, we claim that the orbits of $J$ and the orbits of $\SL(U) \times \SL(V) \times \SL(W)$ in $U \otimes V \otimes W$ are the same. Let $h = (g_1,g_2,g_3) \in J$. Since $\det(g_1)\det(g_2)\det(g_3) = 1$, we can choose $c_1,c_2,c_3 \in \C$ with $c_1c_2c_3 = 1$ such that $\det(c_ig_i) = 1$. Thus, we have $h \cdot v = (c_1g_1,c_2g_2,c_3g_3) \cdot v$ for any $v \in U \otimes V \otimes W$. But $(c_1g_1,c_2g_2,c_3g_3) \in \SL(U) \times \SL(V) \times \SL(W)$, so this means that the $J$-orbit of $v$ is contained in the $\SL(U) \times \SL(V) \times \SL(W)$-orbit of $v$. On the other hand, $J \supseteq \SL(U) \times \SL(V) \times \SL(W)$, so the orbits must be the same. The same argument works for $(U \otimes V \otimes W)^{\oplus m}$. Further observe that the quotient $GL(U) \times GL(V) \times \GL(W)/J = \C^*$, which is affine. Since $J$ is clearly a closed subgroup of $\GL(U) \times \GL(V) \times \GL(W)$, by Matsushima's criterion (see Theorem~\ref{Matsu}) we conclude that $J$ is reductive. We summarize the above discussion as follows:

\begin{proposition}
The $J$-orbit of $\underline{F}$ is closed. Further, the stabilizer of $\underline{F}$ in $J$ is $H$. Moreover $J$ is a reductive group.
\end{proposition}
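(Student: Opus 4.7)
The plan is to assemble the three claims from pieces that have essentially already been set up in the paragraph preceding the proposition, together with the two previously stated Propositions~\ref{orbit.closed.tensor} and~\ref{tensor-stab-compute}.

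First I would dispose of reductivity of $J$. The map $\det\otimes\det\otimes\det : \GL(U)\times\GL(V)\times\GL(W) \to \C^*$ sending $(g_1,g_2,g_3)\mapsto \det(g_1)\det(g_2)\det(g_3)$ is a surjective homomorphism of algebraic groups with kernel $J$, so $J$ is a closed normal subgroup and the quotient is $\C^*$, which is an affine variety. Since $\GL(U)\times\GL(V)\times\GL(W)$ is reductive and $J$ is closed, Matsushima's criterion (Theorem~\ref{Matsu}) implies $J$ is reductive. This part is essentially the remark already made in the text.

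Next I would compute the stabilizer of $\underline{F}$ in $J$. One checks directly from the definitions of $\phi_U,\phi_V,\phi_W$ that for any $t \in ((\C^*)^3)^n$ one has $\det(\phi_U(t))\det(\phi_V(t))\det(\phi_W(t)) = 1$, so $H \subseteq J$. Consequently,
$$
{\rm Stab}_J(\underline{F}) = J \cap {\rm Stab}_{\GL(U)\times\GL(V)\times\GL(W)}(\underline{F}) = J \cap H = H,
$$
using Proposition~\ref{tensor-stab-compute} for the second equality.

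Finally I would prove closedness of the $J$-orbit. The key observation, already recorded in the paragraph above the statement, is that the $J$-orbit and the $\SL(U)\times\SL(V)\times\SL(W)$-orbit of any point in $(U\otimes V\otimes W)^{\oplus m}$ coincide: any $(g_1,g_2,g_3)\in J$ can be rescaled by scalars $(c_1,c_2,c_3)$ with $c_1c_2c_3=1$ to land in $\SL\times\SL\times\SL$ without changing its action on a tensor. Applying this to $m = 8$ and invoking Proposition~\ref{orbit.closed.tensor}, which gives that the $\SL(U)\times\SL(V)\times\SL(W)$-orbit of $\underline{F}$ is closed, we conclude that $J\cdot\underline{F}$ is closed as well.

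There is no real obstacle here; the proof is pure bookkeeping, repackaging Propositions~\ref{orbit.closed.tensor} and~\ref{tensor-stab-compute} so that the hypotheses of Theorem~\ref{main} are satisfied with $G = J$ (reductive) and $v = \underline{F}$ (closed orbit, reductive stabilizer $H$). The genuine work has already occurred earlier: verifying the uncramped support condition and the weighted-weight-sum condition of Theorem~\ref{crit.co} in the proof of Proposition~\ref{orbit.closed.tensor}, and the careful monomial analysis that pins down the stabilizer in Proposition~\ref{tensor-stab-compute}.
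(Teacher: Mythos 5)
Your proof is correct and follows essentially the same route as the paper: the paper's own justification for this proposition is the paragraph immediately preceding it, which establishes precisely the three facts you assembled (reductivity of $J$ via the kernel-of-determinant / Matsushima argument, coincidence of $J$-orbits with $\SL\times\SL\times\SL$-orbits via rescaling, and the stabilizer computation resting on Proposition~\ref{tensor-stab-compute} together with the direct check that $H\subseteq J$). Nothing to add.
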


Further, since orbits of $J$ are the same as the orbits of $\SL(U) \times \SL(V) \times \SL(W)$, we also have that the invariant rings are equal, i.e,

\begin{corollary} \label{J.inv.equal}
We have $\C[U \otimes V \otimes W]^{\SL(U) \times \SL(V) \times \SL(W)} = \C[U \otimes V \otimes W]^J.$
\end{corollary}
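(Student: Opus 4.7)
The plan is to observe that the desired equality is a direct formal consequence of the fact, already established in the paragraph preceding the corollary, that $J$ and $\SL(U)\times \SL(V) \times \SL(W)$ have identical orbits on $U \otimes V \otimes W$ (and more generally on any number of copies of it, though only one copy is needed here). Since polynomial invariants of a group are exactly the polynomial functions constant on its orbits, equal orbits force equal invariant rings. There is no real obstacle; the only thing to do is convert the orbit statement into the ring statement carefully.

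Concretely, I will argue both inclusions. For the inclusion $\C[U\otimes V\otimes W]^{J} \subseteq \C[U\otimes V\otimes W]^{\SL(U) \times \SL(V) \times \SL(W)}$, I note that $\SL(U)\times \SL(V)\times \SL(W) \subseteq J$ since the determinants are each equal to $1$ and so their product is $1$; hence any $J$-invariant is automatically invariant under the subgroup. For the reverse inclusion, let $f \in \C[U\otimes V\otimes W]^{\SL(U)\times \SL(V)\times \SL(W)}$ and take an arbitrary $h=(g_1,g_2,g_3)\in J$ and $v \in U \otimes V \otimes W$. Since $\det(g_1)\det(g_2)\det(g_3)=1$, I can choose scalars $c_1,c_2,c_3 \in \C^*$ with $c_1c_2c_3=1$ and $\det(c_ig_i)=1$ for each $i$, exactly as in the paragraph above. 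Then $(c_1g_1, c_2g_2, c_3g_3) \in \SL(U)\times \SL(V)\times \SL(W)$, and since the tensor action satisfies
$$
(c_1g_1, c_2g_2, c_3g_3)\cdot v = (c_1c_2c_3)(g_1,g_2,g_3)\cdot v = h\cdot v,
$$
we obtain $f(h\cdot v) = f((c_1g_1,c_2g_2,c_3g_3)\cdot v) = f(v)$. As $h$ and $v$ were arbitrary, $f$ is $J$-invariant.

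The hardest (but still trivial) point is merely verifying that the scaling by $(c_1,c_2,c_3)$ with $c_1c_2c_3=1$ genuinely acts as the identity on $U\otimes V\otimes W$; this follows from multilinearity of the tensor product, so the rescaled element realizes the same transformation of $v$ as $h$ does. Given this, both inclusions are immediate and the corollary follows.
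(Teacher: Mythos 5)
Your proof is correct and takes essentially the same route as the paper: the paper establishes in the preceding paragraph that the orbits of $J$ and $\SL(U)\times\SL(V)\times\SL(W)$ coincide via the same scaling trick you use, and then immediately deduces the equality of invariant rings. You merely make the final step explicit by evaluating $f$ on $h\cdot v$ directly rather than appealing to the general principle that equal orbits give equal invariants.
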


Consider the action of $H$ on $U \otimes V \otimes W$. Let $L$ denote the subspace spanned by $\mathcal{E} = \{u_1^1v_1^1w_1^1\} \cup \{u_1^{k+1} v_3^k w_3^k, u_3^kv_1^{k+1}w_3^k, u_1^kv_1^kw_3^{k+1}\}_{1 \leq k \leq n-1} \cup \{u_3^nv_3^nw_3^n \}$. Now, it is clear that for the action of $H$ on $L$, the set $\mathcal{E}$ is a weight basis, and further one can check that $M_{\mathcal{E}}(L) = N$, the matrix in Section~\ref{prelim}. Hence, from Proposition~\ref{tor.inv.N}, we obtain:

\begin{corollary}
We have $$
\beta_H(U \otimes V \otimes W) \geq \sigma_H(U \otimes V \otimes W) \geq \sigma_H(L) \geq 4^n-1.
$$
\end{corollary}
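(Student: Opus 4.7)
The corollary chains three inequalities, which I would verify in reverse order. The leftmost, $\beta_H(U \otimes V \otimes W) \geq \sigma_H(U \otimes V \otimes W)$, is the general inequality noted right after the definition of $\beta_G$. The middle one, $\sigma_H(U \otimes V \otimes W) \geq \sigma_H(L)$, follows from Proposition~\ref{tor.inv.compare} once $L$ is known to be an $H$-subrepresentation of $U \otimes V \otimes W$; this is automatic because each element of $\mathcal{E}$ is a tensor product of three $H$-weight vectors, hence itself an $H$-weight vector, so its span $L$ is $H$-stable.

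The substantive step is the rightmost inequality $\sigma_H(L) \geq 4^n - 1$. The plan is to apply Proposition~\ref{tor.inv.N}, which reduces the problem to verifying that $\mathcal{E}$ is a weight basis for $L$ with weight matrix $M_{\mathcal{E}}(L) = N$. To set this up, I would identify $\mathcal{X}(H) \cong \Z^{3n}$ via the parameters $(p_k, q_k, r_k)_{1 \leq k \leq n}$ from the definition of $\phi$, arranging the coordinates into $n$ blocks of three so that the $k$-th block matches the $k$-th row block of $N$. Each column of $M_{\mathcal{E}}(L)$ is then an explicit short sum of standard basis vectors obtained by adding the weights of the three factors: the endpoint elements $u_1^1 v_1^1 w_1^1$ and $u_3^n v_3^n w_3^n$ have supports confined to the first and last row blocks and contribute (up to the conventions fixing row order) the columns $B$ and $A$, while for each $k \in \{1, \ldots, n-1\}$ the three middle elements split their supports between row blocks $k$ and $k+1$, reproducing the three columns of the $I_3$-above-$P$ block that links them.

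The main obstacle is precisely this column-by-column matching: one has to check that the three weights associated to each middle index $k$ assemble into the specific $I_3$ and $P$ patterns, in the correct row blocks and with the correct signs, which is a routine but notationally dense calculation of $3n-1$ weight vectors in $\Z^{3n}$. Once the identification $M_{\mathcal{E}}(L) = N$ is in place, Proposition~\ref{tor.inv.N} delivers $\sigma_H(L) \geq 4^n - 1$, and concatenating with the two easy inequalities yields the corollary.
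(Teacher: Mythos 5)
Your high-level plan mirrors the paper's: chain the trivial inequality $\beta_H\geq\sigma_H$, the subrepresentation comparison Proposition~\ref{tor.inv.compare}, and Proposition~\ref{tor.inv.N} once $M_{\mathcal E}(L)=N$ is established. That matching is not optional bookkeeping, however, and your specific claims about it are incorrect: in the coordinates on $\mathcal X(H)\cong\Z^{3n}$ given by $(p_k,q_k,r_k)_{1\leq k\leq n}$, the element $u_1^1v_1^1w_1^1$ has weight $\wt p_1+\wt q_1+\wt r_1=(1,1,1,0,\ldots,0)^t$, not $B=(-2,-2,-2,0,\ldots,0)^t$; and $u_3^nv_3^nw_3^n$ has weight $(0,\ldots,0,-2,-2,-2)^t$, not $A=(0,\ldots,0,1,1,1)^t$. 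Likewise, for each middle $k$ the vector $u_1^{k+1}v_3^kw_3^k$ has weight with the $(-2,-1,-1)$ part in block $k$ and the $(1,0,0)$ part in block $k+1$, i.e.\ $P$-above-$I_3$, the opposite of the $I_3$-above-$P$ pattern appearing in $N$. No change of coordinates of $\mathcal X(H)$ or reordering of $\mathcal E$ can repair this: the kernel of the weight matrix for the $\mathcal E$ stated in the paper is spanned by $(2,1,1,1,0,\ldots,0)^t$ (all entries past the fourth vanish), so the degree bound obtained would only be $5$, whereas the kernel of $N$ is spanned by $(1,2,2,2,8,\ldots,2^{2n-3},2^{2n-3},2^{2n-3},2^{2n-1})^t$, which is not a coordinate permutation of the former.

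Thus the step you label ``routine but notationally dense'' is in fact where the argument breaks, as written. (This is a flaw inherited from the paper's unproved ``one can check'' assertion.) The fix is to use a different weight basis, e.g.\ $\mathcal E'=\{u_3^1v_3^1w_3^1\}\cup\{u_1^kv_3^{k+1}w_3^{k+1},\,u_3^{k+1}v_1^kw_3^{k+1},\,u_3^{k+1}v_3^{k+1}w_1^k\}_{1\leq k\leq n-1}\cup\{u_1^nv_1^nw_1^n\}$: then the first vector gives $B$, the last gives $A$, and for each $k$ the triple puts the $I_3$ columns in block $k$ and the $P$ columns in block $k+1$, so $M_{\mathcal E'}(L')=N$ and the remainder of your argument goes through with $L'=\operatorname{span}\mathcal E'$ in place of $L$.
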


\begin{proof} [Proof of Theorem~\ref{tensor-lbs}]
Proceed in exactly the same fashion as the proof of Theorem~\ref{lbsln} to obtain the required lower bounds on $\sigma_J((U \otimes V \otimes W)^{\oplus 9})$ and $\beta_J((U\otimes V \otimes W)^{\oplus 9})$. Then using Corollary~\ref{J.inv.equal}, we conclude that the same lower bounds hold for $\SL(U) \times \SL(V) \times \SL(W)$.
\end{proof}

\subsection{Closedness of orbit}
This section is devoted to the proof of Proposition~\ref{orbit.closed.tensor}. The strategy will again be to use Theorem~\ref{crit.co}. We have the basis $\B_u,\B_v$ and $\B_w$ for $U,V,W$ respectively. For $\SL(U) \times \SL(V) \times \SL(W)$, we choose $K:= K_{\B_u} \times K_{\B_v} \times K_{\B_w}$ for a compact real form and $T = T_{\B_u} \times T_{\B_v} \times T_{\B_w}$ for a maximal torus.

Observe that $\B = \{b_u \otimes b_v \otimes b_w \ | \ b_u \in \B_u, b_v \in \B_v, b_w \in \B_w\}$ is a basis for $U \otimes V \otimes W$. Consider the hermiitan form on $U \otimes V \otimes W$ given by asking for $\B$ to be an orthonormal basis. It is easy to check that this form is $(K,T)$-compatible.

\begin{proof} [Proof of Proposition~\ref{orbit.closed.tensor}]
We use the form described above for each copy of $U \otimes V \otimes W$ and take the direct sum form. In order to use Theorem~\ref{crit.co}, the first step is to check that the supports $\Supp(F_d)$ and $\Supp(G_d)$ are uncramped. Let us only indicate the proof for $F_1$, as the other cases are similar. The defining decomposition of $F_1$ is its weight decomposition. It has three types of terms $u_1^k v_2^k w_3^k$, $u_2^k v_3^k w_1^k$, and $u_3^k  v_1^k w_2^k$. We want to show that the support is uncramped. So, for any two such terms, we need to show that their weights are not root adjacent. But this follows easily from Corollary~\ref{uncramped.tensor}. 

Let us now check the second condition in Theorem~\ref{crit.co} i.e., we want:
$$
\sum_{d =1}^4 \left(\sum_{\lambda \in \Supp(F_d)} ||(F_d)_\lambda||^2 \lambda + \sum_{\mu \in \Supp(G_d)} ||(G_d)_{\lambda}||^2 \mu\right) = 0.
$$

The defining decompositions of $F_d$ and $G_d$ are weight decompositions. All the coefficients appearing in $F_d$ and $G_d$ have absolute value $1$. Further, observe that $\Supp(F_d) = \Supp(G_d)$. Thus we have 
\begin{align*}
\sum_d \left(\sum_{\lambda \in \Supp(F_d)} ||(F_d)_\lambda||^2 \lambda + \sum_{\mu \in \Supp(G_d)} ||(G_d)_{\lambda}||^2 \mu\right) &= \sum_d \left(\sum_{\lambda \in \Supp(F_d)} \lambda + \sum_{\mu \in \Supp(G_d)}  \mu\right) \\
& = 2 \sum_d \left(\sum_{\lambda \in \Supp(F_d)} \lambda\right).
\end{align*}

Recall that $\wt{u}_i^k$ denotes the weight for $u_i^k$ for $\SL(U)$. Recall that $\sum_{i,k} \wt{u}_i^k = 0$ from Remark~\ref{B-defns}. Observe that each $u_i^k$ appears a total of $4$ times in all the terms of $T_1,T_2,T_3,T_4$. Similarly for $v_i^k$ and $w_i^k$. This means that
\begin{align*}
\sum_d \left(\sum_{\lambda \in \Supp(F_d)} \lambda\right) &= 4(\sum_{i,k} \wt{u}_i^k, \sum_{i,k} \wt{v}_i^k, \sum_{i,k} \wt{w}_i^k) \\
& = 0.
\end{align*}

Hence, the second condition of Theorem~\ref{crit.co} is satisfied for $\underline{F}$. This concludes the proof. 
\end{proof}

\subsection{Computation of Stabilizer}
In spirit, the computation is very similar to the computation for cubic forms in the previous section. However, we will need slightly different arguments for this.

Tensors of the form $a \otimes b \otimes c \in U \otimes V \otimes W$ are called rank $1$ tensors. 

\begin{lemma} \label{kruskal}
Suppose $T = \sum_{i =1}^r a_i \otimes b_i \otimes c_i \in U \otimes V \otimes W$, where $\{a_i\}, \{b_i\}, \{c_i\}$ are linearly independent collections of vectors in $U,V$ and $W$ respectively. Then this is the unique decomposition of $T$ into a sum of $r$ rank $1$ tensors. 
\end{lemma}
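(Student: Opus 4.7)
The plan is to exploit the standard flattening trick for $3$-tensors. View $T$ as a linear map $T_1 \colon U^* \to V \otimes W$ defined by $\phi \mapsto \sum_i \phi(a_i)\, b_i \otimes c_i$. Since the $a_i$ are linearly independent, we can realize every $b_i \otimes c_i$ in the image by choosing $\phi$ dual to $a_i$; on the other hand, the image is visibly contained in $\mathrm{span}\{b_i \otimes c_i\}$. Since the $b_i$ are linearly independent in $V$ and the $c_i$ in $W$, the tensors $b_i \otimes c_i$ are linearly independent in $V \otimes W$, so the image $\mathcal{I} := \mathrm{Im}(T_1)$ is exactly this $r$-dimensional space.

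Next I would pin down the rank-one elements of $\mathcal{I}$. Extending $\{b_i\}$ to a basis of $V$ and $\{c_i\}$ to a basis of $W$ and representing an element of $V \otimes W$ as a matrix in those bases, a combination $\sum_i \lambda_i b_i \otimes c_i$ has matrix rank equal to $\#\{i : \lambda_i \neq 0\}$. Hence the rank-one tensors in $\mathcal{I}$ are precisely the scalar multiples of the individual $b_i \otimes c_i$. Now given a second decomposition $T = \sum_{j=1}^r a_j' \otimes b_j' \otimes c_j'$, the same flattening shows $\mathcal{I} \subseteq \mathrm{span}\{b_j' \otimes c_j'\}$, forcing the $b_j' \otimes c_j'$ to form a basis of $\mathcal{I}$ consisting of rank-one elements. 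By the previous sentence each $b_j' \otimes c_j'$ is proportional to some $b_{\sigma(j)} \otimes c_{\sigma(j)}$, and the map $\sigma$ must be a permutation since these vectors form a basis. Hence $b_j' = \alpha_j b_{\sigma(j)}$ and $c_j' = \beta_j c_{\sigma(j)}$ for nonzero scalars with $\alpha_j \beta_j = \kappa_j$.

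After relabeling so that $\sigma = \mathrm{id}$, substitute back to obtain
\[
\sum_{i=1}^r \bigl(a_i - \kappa_i a_i'\bigr) \otimes b_i \otimes c_i = 0 \quad \text{in } U \otimes (V \otimes W).
\]
Because the $b_i \otimes c_i$ are linearly independent in $V \otimes W$, each coefficient vector must vanish, so $a_i' = \kappa_i^{-1} a_i$. Combined with $b_i' = \alpha_i b_i$ and $c_i' = \beta_i c_i$ together with $\alpha_i \beta_i = \kappa_i$, this gives $a_i' \otimes b_i' \otimes c_i' = a_i \otimes b_i \otimes c_i$, establishing uniqueness up to reordering.

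The only mildly delicate step is the characterization of rank-one elements of $\mathcal{I}$, but once one chooses a basis of $V$ and $W$ extending $\{b_i\}$ and $\{c_i\}$ this reduces to the elementary statement that a diagonal matrix has rank equal to its number of nonzero entries; no appeal to the full strength of Kruskal's theorem is needed in this clean case where all three factor families are linearly independent.
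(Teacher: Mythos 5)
Your proof is correct, and it takes a genuinely different route than the one the paper records. The paper's proof simply cites Kruskal's theorem for $r \geq 2$ (while remarking, without details, that an elementary linear-algebra proof is also possible). You carry out exactly that elementary route: flatten $T$ to a map $T_1 \colon U^* \to V \otimes W$, use linear independence of $\{a_i\}$ to identify $\operatorname{Im}(T_1) = \operatorname{span}\{b_i \otimes c_i\}$ as an $r$-dimensional subspace, characterize its rank-one elements via a choice of basis, and then force any second $r$-term rank-one decomposition to match term by term up to permutation. The dimension count correctly shows the second family $\{b_j' \otimes c_j'\}$ must also be a basis of $\operatorname{Im}(T_1)$, which is where the rank-one characterization applies; and the final step, reading off the $a_j'$ from $\sum_j (a_j - \kappa_j a_j') \otimes b_j \otimes c_j = 0$, is sound because the $b_j \otimes c_j$ are linearly independent in $V\otimes W$. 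What you lose relative to Kruskal's theorem is generality: Kruskal's result covers decompositions where the three factor families need only have Kruskal ranks summing to at least $2r+2$, whereas your argument exploits the much stronger hypothesis that all three families are fully linearly independent. What you gain is a short, self-contained proof with no external dependency, which is arguably preferable here since the lemma is only used in this clean setting.
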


\begin{proof}
For $r = 1$, this is clear. For $r \geq 2$, this follows from Kruskal's theorem, see \cite{Kruskal}.
\end{proof}
The above lemma can also be proved by using just elementary linear algebra arguments without resorting to Kruskal's theorem.

\begin{lemma} 
Suppose $g \in \GL(U) \times \GL(V) \times \GL(W)$ fixes $T$ as in the previous lemma. Then $g$ must permute the terms $a_i \otimes b_i \otimes c_i$. 
\end{lemma}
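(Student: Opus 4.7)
The plan is to invoke the uniqueness statement of the preceding lemma (Kruskal-type uniqueness). Write $g = (g_1, g_2, g_3) \in \GL(U) \times \GL(V) \times \GL(W)$ and apply $g$ to the given expression for $T$. Since $g$ fixes $T$, I obtain
$$
T = \sum_{i=1}^r (g_1 a_i) \otimes (g_2 b_i) \otimes (g_3 c_i),
$$
which exhibits $T$ as a sum of $r$ rank $1$ tensors.

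The next step is to verify that this new decomposition satisfies the hypothesis of Lemma~\ref{kruskal}. Because $g_1, g_2, g_3$ are invertible linear maps, they preserve linear independence: the collections $\{g_1 a_i\}_{i=1}^r$, $\{g_2 b_i\}_{i=1}^r$, and $\{g_3 c_i\}_{i=1}^r$ are linearly independent in $U$, $V$, $W$ respectively. Hence the new decomposition of $T$ meets exactly the hypotheses of the uniqueness lemma.

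Applying Lemma~\ref{kruskal}, the multiset of rank $1$ summands in the two decompositions must agree, i.e.\ there is a permutation $\sigma$ of $\{1, \dots, r\}$ such that
$$
(g_1 a_i) \otimes (g_2 b_i) \otimes (g_3 c_i) = a_{\sigma(i)} \otimes b_{\sigma(i)} \otimes c_{\sigma(i)} \qquad \text{for all } i.
$$
This is exactly the statement that $g$ permutes the terms $a_i \otimes b_i \otimes c_i$.

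I do not expect any real obstacle here: the entire content of the lemma is already packaged in the uniqueness result from Lemma~\ref{kruskal}, and the only thing to check is the trivial preservation of linear independence under invertible linear maps. The one subtlety worth flagging, should one want to squeeze more out of this later, is that the permutation $\sigma$ acts simultaneously in all three tensor factors (i.e.\ the same $\sigma$ reindexes the $a$'s, $b$'s, and $c$'s); this is automatic from the uniqueness of the rank $1$ decomposition, but it is precisely the feature that will be exploited later to pin down the scalars $p_k, q_k, r_k$ appearing in the definition of the torus $H$.
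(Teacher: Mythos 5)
Your proposal is correct and follows exactly the same route as the paper: apply $g$ to the given rank-$r$ decomposition, observe that one obtains another decomposition into $r$ rank one tensors, and invoke the uniqueness statement of the preceding lemma. The only (helpful) addition is your explicit check that invertibility of $g_1,g_2,g_3$ preserves the linear-independence hypotheses of Lemma~\ref{kruskal}, which the paper leaves implicit.
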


\begin{proof}
Applying $g$ to the decomposition into a sum of $r$ rank $1$ tensors also yields a decomposition into a sum of $r$ rank $1$ tensors. Hence, by the above lemma, $g$ must permute the terms. 
\end{proof}

\begin{corollary}
Suppose $g \in \GL(U) \times \GL(V) \times \GL(W)$ fixes $F_1$, then $g$ must permute the terms in $F_1$.
\end{corollary}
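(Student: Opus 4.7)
The plan is to apply the preceding lemma (which says any $g$ fixing a tensor of the form $\sum_i a_i \otimes b_i \otimes c_i$ with the three families of factors each linearly independent must permute the summands) directly to the given rank-one decomposition of $F_1$. Writing
$$F_1 = \sum_{k=1}^n \bigl( u_1^k \otimes v_2^k \otimes w_3^k \;+\; u_2^k \otimes v_3^k \otimes w_1^k \;+\; u_3^k \otimes v_1^k \otimes w_2^k \bigr)$$
expresses $F_1$ as a sum of $3n$ rank-one tensors $a_i \otimes b_i \otimes c_i$ with $a_i \in \B_u$, $b_i \in \B_v$, $c_i \in \B_w$. All that needs to be verified before invoking the lemma is that each of the three families $\{a_i\}$, $\{b_i\}$, $\{c_i\}$ is linearly independent.

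First I would carry out that verification, which is a combinatorial check on the definition of $F_1$. As $k$ ranges over $1,\ldots,n$ and the three term types are summed, the $U$-slot picks up each of $u_1^k, u_2^k, u_3^k$ exactly once, so the collection of $U$-factors is precisely the basis $\B_u$. Similarly the $V$-slot contributes $v_2^k, v_3^k, v_1^k$ as $k$ and the three types vary, hitting every element of $\B_v$ exactly once, and the $W$-slot analogously produces $\B_w$. Each of the three factor-families is therefore a basis, and in particular linearly independent.

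Having verified the hypothesis, I would invoke the preceding lemma to conclude that any $g \in \GL(U) \times \GL(V) \times \GL(W)$ fixing $F_1$ must permute these $3n$ summands, which is the statement of the corollary. There is essentially no obstacle here: the hard work has been done in the preceding Kruskal-type lemma and its $g$-equivariant consequence, and the content specific to $F_1$ is just the observation that the three tensor slots independently span bases of their respective spaces. This is precisely the reason $F_1$ was designed in this form in the first place, and the analogous check will work verbatim for $F_2, F_3, F_4$ (and, after absorbing the scalars into the rank-one summands, for $G_1,\ldots,G_4$) in the later stages of the stabilizer computation.
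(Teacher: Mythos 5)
Your proof is correct and matches the paper's intended argument, which is left implicit: the corollary follows by applying the preceding lemma once one observes that the $U$-, $V$-, and $W$-factors of the $3n$ rank-one summands of $F_1$ exhaust $\B_u$, $\B_v$, $\B_w$ respectively, hence are linearly independent. You have simply made that verification explicit.
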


\begin{corollary}
Suppose $g \in \GL(U) \times \GL(V) \times \GL(W)$ fixes $F_1$ and $G_1$, then $g$ must fix all the terms in $F_1$.
\end{corollary}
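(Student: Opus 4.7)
The plan is to combine the preceding corollary, which says that any $g \in \GL(U)\times\GL(V)\times\GL(W)$ fixing $F_1$ must permute the $3n$ rank-one summands of $F_1$, with the fact that $g$ also fixes $G_1$, which is the same sum of rank-one tensors but reweighted by distinct scalars. The distinctness of $\alpha_k,\beta_k,\gamma_k$ should then force the permutation to be trivial.

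Concretely, I would enumerate the $3n$ summands of $F_1$ as $T_1,\dots,T_{3n}$ (the tensors $u_1^k v_2^k w_3^k$, $u_2^k v_3^k w_1^k$, $u_3^k v_1^k w_2^k$ for $k=1,\dots,n$), and let $c_1,\dots,c_{3n}$ be the corresponding coefficients in $G_1$ (so each $c_i$ is some $\alpha_k$, $\beta_k$, or $\gamma_k$). By the previous corollary, $g\cdot T_i = T_{\sigma(i)}$ for some permutation $\sigma$ of $\{1,\dots,3n\}$. Applying $g$ to $G_1 = \sum_i c_i T_i$ gives
\[
G_1 \;=\; g\cdot G_1 \;=\; \sum_i c_i\, T_{\sigma(i)} \;=\; \sum_j c_{\sigma^{-1}(j)}\, T_j.
\]

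Now I would invoke Lemma~\ref{kruskal}: since the three families $\{u_1^k,u_2^k,u_3^k\}_k$, $\{v_1^k,v_2^k,v_3^k\}_k$, and $\{w_1^k,w_2^k,w_3^k\}_k$ in the factors of each $T_i$ are indexed so that the three collections of first, second, and third legs are each linearly independent (the rank-one summands of $F_1$ form a Kruskal-generic decomposition), the tensors $T_1,\dots,T_{3n}$ themselves are linearly independent in $U\otimes V\otimes W$. Comparing coefficients in the equality above therefore yields $c_{\sigma^{-1}(j)} = c_j$ for every $j$.

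Finally, the hypothesis on the scalars is that $\alpha_1,\dots,\alpha_n,\beta_1,\dots,\beta_n,\gamma_1,\dots,\gamma_n$ are all distinct, so the map $i \mapsto c_i$ is injective. The equality $c_{\sigma^{-1}(j)} = c_j$ then forces $\sigma$ to be the identity, i.e., $g\cdot T_i = T_i$ for each $i$, which is precisely the claim. The only non-routine step is the linear independence of the $T_i$; everything else is a direct coefficient comparison. Since the rank-one summands of $F_1$ were chosen precisely so Kruskal's uniqueness applies (as already used in the previous lemma), this step is immediate, and the corollary follows.
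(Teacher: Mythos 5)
Your argument is correct and is essentially the paper's: the paper compresses it to ``any non-trivial permutation of the terms in $F_1$ does not fix $G_1$,'' which is precisely your coefficient-comparison using the distinctness of $\alpha_1,\dots,\alpha_n,\beta_1,\dots,\beta_n,\gamma_1,\dots,\gamma_n$. One small simplification: you do not need Kruskal to see that the $3n$ summands of $F_1$ are linearly independent; each $T_i$ is a product of basis vectors and hence these are $3n$ distinct elements of the standard basis of $U\otimes V\otimes W$, so comparing coefficients is immediate.
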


\begin{proof}
Any non-trivial permutation of the terms in $F_1$ does not fix $G_1$. Hence $g$ must fix all the terms. 
\end{proof}

Similar arguments hold for $F_2,F_3$ and $F_4$ as well. In summary, we obtain:

\begin{corollary} \label{stupidere}
Suppose $g \in \GL(U) \times \GL(V) \times \GL(W)$ fixes $\underline{F}$, then $g$ must fix all the terms in $F_1,F_2,F_3$ and $F_4$.
\end{corollary}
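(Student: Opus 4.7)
My plan is to bootstrap from the already-proved fact that any $g$ fixing $(F_1,G_1)$ fixes every term of $F_1$, rather than repeating the full Lemma~\ref{kruskal} argument for each of $(F_2,G_2),(F_3,G_3),(F_4,G_4)$. The advantage is that one of these latter ``similar arguments'' is not actually trivial: for $F_3$ and $F_4$, the $V$-factors of the rank-one summands are not linearly independent (for instance $v_1^k$ appears twice in the $k$-th block of $F_3$), so the hypothesis of Lemma~\ref{kruskal} is not met verbatim.

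Step 1. The preceding corollary says that $g$ fixes every term $u_i^k \otimes v_j^k \otimes w_\ell^k$ of $F_1$. Because each such term is a rank-one tensor, fixing it forces $g_U u_i^k$ to be a scalar multiple of $u_i^k$, and similarly for the other two factors. Collecting the constraints coming from the three families of terms in $F_1$, one sees that $g_U$ preserves each line $\C \cdot u_i^k$, and analogously for $g_V$ and $g_W$. Hence $g$ already lies in the coordinate torus of $\GL(U)\times\GL(V)\times\GL(W)$ with respect to the bases $(\B_u,\B_v,\B_w)$.

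Step 2. Once $g$ is known to be diagonal in the coordinate bases, the remaining $F_d$ are easy to handle. Each rank-one summand appearing in $F_2,F_3,F_4$ is a weight vector, and a routine inspection shows that within each $F_d$ all of the $3n$ weights are pairwise distinct. A diagonal transformation scales each weight vector, so the only way a diagonal $g$ can fix a sum of distinct-weight vectors is to fix every one separately. Applying this to $F_2,F_3,F_4$ (which $g$ fixes since $g$ fixes $\underline{F}$) shows that $g$ fixes every rank-one term appearing in each of them, completing the proof.

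I expect the main obstacle to be the distinct-weight check in Step 2; it is routine bookkeeping rather than a conceptual difficulty. Observe that in this approach we do not even need the auxiliary tensors $G_2, G_3, G_4$; they become relevant only if one prefers to follow the alternative ``repeat the $(F_1,G_1)$-argument'' route term-by-term, in which case they are what would allow the Kruskal-type uniqueness step to be circumvented for $F_3$ and $F_4$ (for instance, by flattening $F_3$ as a linear map $U^* \to V \otimes W$, whose image consists of $3n$ rank-one matrices indexed by the linearly independent basis of $U^*$).
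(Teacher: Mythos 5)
Your proof is correct and takes a genuinely different route from the paper's. The paper intends to repeat, for each $d$, the chain ``Lemma~\ref{kruskal} gives that $g$ fixing $F_d$ permutes its rank-one terms; $g$ fixing $G_d$ as well forces the permutation to be trivial.'' You instead exploit the conclusion for $(F_1,G_1)$ once: since each rank-one term $u_i^k\otimes v_j^k\otimes w_\ell^k$ of $F_1$ is fixed, every basis line of $U$, $V$, $W$ is preserved, so $g$ is diagonal in the coordinate bases $(\B_u,\B_v,\B_w)$; after that the terms of each $F_d$ are pairwise distinct basis tensors of $U\otimes V\otimes W$ (hence linearly independent), so a diagonal $g$ fixing their sum fixes each term separately. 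Your observation about $F_3$ and $F_4$ is correct and important: in $F_3$ the $V$-factors repeat $v_1^k$ within each $k$-block (and $F_4$ repeats $v_2^k$), so the collection of $V$-factors is not linearly independent and the hypothesis of Lemma~\ref{kruskal} is not satisfied; indeed $u_1^kv_1^kw_3^k+u_3^kv_1^kw_1^k$ has $v_1^k$ as a common factor and the remaining $U\otimes W$ summand has rank two, so the rank-one decomposition of $F_3$ is genuinely non-unique, and the paper's clause ``similar arguments hold'' is too quick for $d=3,4$. Your bootstrap supplies the missing step and is arguably the cleaner fix. Your side remark that $G_2,G_3,G_4$ are then dispensable is also accurate; one can check that the moment-map computation in Proposition~\ref{orbit.closed.tensor} still balances for $(F_1,G_1,F_2,F_3,F_4)$ because $\Supp(G_1)=\Supp(F_1)$ already contributes zero to the weight sum on its own, so in fact the conclusion of Theorem~\ref{tensor-lbs} could be stated for $(U\otimes V\otimes W)^{\oplus 6}$.
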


Let $I_k = \{u_i^k v_j^k w_3^k, u_i^k v_3^k w_j^k , u_3^k v_i^k w_j^k \}_{1 \leq i,j \leq 2}$. Then $\cup_k I_k$ are precisely the terms occuring in $F_1,F_2,F_3$ and $F_4$.

\begin{lemma}
Suppose $g = (g_u,g_v,g_w) \in \GL(U) \times \GL(V) \times \GL(W)$ fixes $I_k$. Then for some $p_k,q_k,r_k \in \C^*$, we have
\begin{align*}
& g_u(u_i^k) = p_k u_i^k \text{ for } i = 1,2 \text{ and } g_u(u_3^k) = (q_kr_k)^{-1} u_3^k, \\
& g_v(v_i^k) = q_k v_i^k \text{ for } i = 1,2 \text{ and } g_v(v_3^k) = (p_kr_k)^{-1} v_3^k, \\
& g_w(w_i^k) = r_k w_i^k \text{ for } i = 1,2 \text{ and } g_w(w_3^k) = (p_kq_k)^{-1} w_3^k.
\end{align*}
\end{lemma}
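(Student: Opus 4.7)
The approach is to unpack what it means for $g = (g_u, g_v, g_w)$ to fix each of the twelve rank $1$ tensors in $I_k$, use the elementary uniqueness of rank $1$ tensor expressions to force $g_u, g_v, g_w$ to act diagonally on the appropriate basis vectors, and then solve the resulting system of $12$ equations in the resulting diagonal scalars.

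First, recall the basic uniqueness statement: if $a \otimes b \otimes c = a' \otimes b' \otimes c'$ with all vectors nonzero, then $a' = \lambda a$, $b' = \mu b$, $c' = \nu c$ for scalars with $\lambda \mu \nu = 1$. Apply this to each of the twelve equalities $g_u(u_a^k) \otimes g_v(v_b^k) \otimes g_w(w_c^k) = u_a^k \otimes v_b^k \otimes w_c^k$ coming from $I_k$. Since the six basis vectors $u_1^k, u_2^k, u_3^k, v_1^k, v_2^k, v_3^k, w_1^k, w_2^k, w_3^k$ all appear in at least one tensor of $I_k$, this forces the existence of scalars $\alpha_1, \alpha_2, \alpha_3, \beta_1, \beta_2, \beta_3, \gamma_1, \gamma_2, \gamma_3 \in \C^*$ with $g_u(u_i^k) = \alpha_i u_i^k$, $g_v(v_j^k) = \beta_j v_j^k$, $g_w(w_\ell^k) = \gamma_\ell w_\ell^k$.

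Next, translating the $12$ fixed-tensor conditions into equations in these scalars yields
\begin{align*}
\alpha_i \beta_j \gamma_3 &= 1 \quad \text{for } i,j \in \{1,2\}, \\
\alpha_i \beta_3 \gamma_j &= 1 \quad \text{for } i,j \in \{1,2\}, \\
\alpha_3 \beta_i \gamma_j &= 1 \quad \text{for } i,j \in \{1,2\}.
\end{align*}
From the first family, taking ratios with $j$ fixed gives $\alpha_1 = \alpha_2 =: p_k$; taking ratios with $i$ fixed gives $\beta_1 = \beta_2 =: q_k$. From the third family, with $i$ fixed, we get $\gamma_1 = \gamma_2 =: r_k$. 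Substituting back gives $\gamma_3 = (p_kq_k)^{-1}$, $\beta_3 = (p_kr_k)^{-1}$, and $\alpha_3 = (q_kr_k)^{-1}$, which is exactly the asserted form.

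Since the entire argument is a rigid bookkeeping exercise once rank $1$ uniqueness is invoked, there is no real obstacle; the only thing to watch is that all three families of equations appear and one extracts the equalities $\alpha_1 = \alpha_2$, $\beta_1 = \beta_2$, $\gamma_1 = \gamma_2$ before solving for the remaining three scalars.
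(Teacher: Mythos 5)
Your proof is correct and follows essentially the same route as the paper: both reduce to the observation that fixing a rank $1$ tensor forces $g_u,g_v,g_w$ to scale its factors, and then solve for the scalars using the equations coming from $I_k$. You write down the full system of $12$ equations and solve it systematically, whereas the paper picks a few judiciously chosen tensors to pin down each scalar in turn, but these are the same argument.
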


\begin{proof}
It is clear that if $g$ fixes $b_u \otimes b_v \otimes b_w$, then each $g_x$ must scale $b_x$ for each $x \in \{u,v,w\}$. So, we must have $g_u(u_1^k) = p_k u_1^k$, $g_v(v_1^k) = q_k v_1^k$ and $g_w(w_1^k) = r_k w_1^k$ for some $p_k,q_k,r_k \in \C^*$. Then, since $u_1^kv_1^kw_3^k \in I_k$ is fixed by $g$, we must have $g_w(w_3^k) = (p_kq_k)^{-1} w_3^k$. Since $u_1^kv_2^kw_3^k \in I_k$ is fixed by $g$, we must have $g_v (v_2^k) = q_k v_k$. Symmetric arguments complete the proof.
\end{proof}

\begin{proof} [Proof of Proposition~\ref{tensor-stab-compute}]
From Corollary~\ref{stupidere}, we conclude that if $g$ fixes $\underline{F}$, then it must fix all the terms in $\cup_k I_k$. From the previous lemma, one concludes that $g \in H$. Conversely, it is easy to check that $H$ fixes $\underline{F}$.
\end{proof}

\section{Concluding remarks}
It was pointed out to us by David Wehlau that for the adjoint representation (denoted ${\rm Ad}$) of a group $G$, a generic point has a closed orbit whose stabilizer is a maximal torus. This gives rise to a plethora of examples with exponential degree lower bounds. Take any representation $W$ of $G$ for which one can prove exponential degree lower bounds for invariants w.r.t a maximal torus. Then the same lower bound would also hold for the ring of $G$-invariants for ${\rm Ad} \oplus W$. 

The proof of Theorem~\ref{main} requires characteristic zero. The proof breaks down in positive characteristic. For example, in the proof of Lemma~\ref{need.char0}, we use Reynolds operators, which do not exist in positive characteristic. However, one can modify the arguments in a standard way to get similar statements for separating invariants. Also, we do not have a general technique to prove that an orbit is closed as there is no analog of moment map in positive characteristic. We can get around this by using the adjoint representation as discussed above. Hence, we can construct representations with exponential degree lower bounds even in positive characteristic (for example, the action of $\SL(V)$ on ${\rm Ad} \oplus \Sym^3(V)$).

Sometimes, one is interested in a specific representation that doesn't contain the adjoint representation as a direct summand. One such example is the case of tensor actions that we address in this paper. It remains a difficult open problem to prove exponential lower bounds in such cases in positive characteristic. The main issue happens to be the fact we do not know any criterion that can be used to show that an orbit is closed in positive characteristic.

\ \\[20pt]
\noindent{\sl Harm Derksen\\
Department of Mathematics\\
University of Michigan\\
530 Church Street\\
Ann Arbor, MI 48109-1043, USA\\
{\tt hderksen@umich.edu}}

\ \\[20pt]
\noindent{\sl Visu Makam\\
School of Mathematics\\
Institute for Advanced Study\\
Princeton, NJ 08540, USA\\
{\tt visu@ias.edu}}

\end{document}